\providecommand{\noopsort[1]{}}
\numberwithin{equation}{section}
\newtheorem{thm}{Theorem}[section]
\newtheorem{cor}[thm]{Corollary}
\newtheorem{prop}[thm]{Proposition}
\newtheorem{lem}[thm]{Lemma}
\theoremstyle{remark}
\newtheorem{rem}[thm]{Remark}
\newtheorem{example}[thm]{Example}
\newtheorem{hyp}[thm]{Hypothesis}
\theoremstyle{definition}
\newtheorem{defn}[thm]{Definition}
\newcommand{\eps}{\varepsilon}
\newcommand{\weak}{\rightharpoonup}
\newcommand{\one}{\mathbbm{1}}
\newcommand{\OU}{\mathsf{ou}}
\newcommand{\bx}{\mathbf{x}}
\newcommand{\by}{\mathbf{y}}
\newcommand{\ip}[2]{\ifthenelse{\equal{#1}{}}{\mbox{$ [ \,\cdot\, , \, \cdot \, ] $}}{
\mbox{$ \left[ #1 \, , \, #2 \right]$}}}
\newcommand{\norm}[1]{\ifthenelse{\equal{#1}{}}{\mbox{$\|\cdot\|$}}{\mbox{$\| #1 \|$}}}
\newcommand{\dual}[2]{\ifthenelse{\equal{#1}{}}{\mbox{$ \langle \,\cdot\; , \; \cdot \, \rangle $}}{
\mbox{$ \langle #1   ,  #2 \rangle$}}}
\newcommand{\CR}{\mathbb{R}}
\newcommand{\CC}{\mathbb{C}}
\newcommand{\CN}{\mathbb{N}}
\newcommand{\PP}{\mathbf{P}}
\newcommand{\FF}{\mathbb{F}}
\renewcommand{\P}{\mathbb{P}}
\newcommand{\B}{\mathcal{B}}
\newcommand{\M}{\mathcal{M}}
\newcommand{\F}{\mathcal{F}}
\newcommand{\bS}{\mathbf{S}}
\newcommand{\bX}{\mathbf{X}}
\newcommand{\bY}{\mathbf{Y}}
\newcommand{\bM}{\mathbf{M}}
\newcommand{\cF}{\mathscr{F}}
\newcommand{\cL}{\mathscr{L}}
\newcommand{\cT}{\mathscr{T}}
\newcommand{\cR}{\mathscr{R}}
\newcommand{\cD}{\mathscr{D}}
\newcommand{\cP}{\mathcal{P}}
\newcommand{\scrP}{\mathscr{P}}
\newcommand{\cB}{\mathscr{B}}
\newcommand{\TV}{\mathrm{TV}}
\newcommand{\expect}{\mathbb{E}}
\newcommand{\half}{\frac{1}{2}}
\newcommand{\inject}{\hookrightarrow}
\renewcommand{\P}{{\mathbb P}}
\renewcommand{\Re}{\mathrm{Re}}
\newcommand{\OO}{\mathcal{O}}
\newcommand{\fX}{\mathfrak{X}}
\newcommand{\fV}{\mathfrak{V}}
\begin{document}
\title[Well-posedness of semilinear equations]{Perturbation of strong Feller semigroups and 
well-posedness of semilinear stochastic equations on Banach spaces}
\author{Markus C.\ Kunze}
\address{Delft Institute of Applied Mathematics, Delft University of Technology, P.O. Box 5031, 2600 GA Delft, The Netherlands}
\curraddr{Institute of Applied Analysis, University of Ulm, 89069 Ulm, Germany}
\email{markus.kunze@uni-ulm.de}
\subjclass[2010]{60H15, 60J35}
\keywords{Semilinear stochastic equation, uniqueness in law, transition semigroup, strong Feller property, invariant measure}
\thanks{The author was supported by VICI subsidy 639.033.604 in the `Vernieuwingsimpuls' program of the Netherlands Organization
for Scientific Research (NWO)}    

\begin{abstract}
We prove a Miyadera-Voigt type perturbation theorem for strong Feller semigroups. Using this result, we prove
well-posedness of the semilinear stochastic equation
\[ dX(t) = [AX(t) + F(X(t))]dt + GdW_H(t) \]
on a separable Banach space $E$, assuming that $F$ is bounded and measurable and that the associated linear 
equation, i.e.\ the equation with $F \equiv 0$,
is well-posed and its transition semigroup is strongly Feller and satisfies an appropriate gradient estimate. We also 
study existence and uniqueness of invariant measures for the associated transition semigroup. 
\end{abstract}

\maketitle 

\section{Introduction}
In this article we study the semilinear stochastic equation
\begin{equation}\label{eq.semilinear}
 dX(t)  = \big[AX(t) + F(X(t))\big]dt + GdW_H(t)
\end{equation}
on a real, separable Banach space $E$. Here $A$ generates a strongly continuous semigroup $\bS$ on $E$, $F$ is a bounded 
measurable map from $E$ to $E$ and $G \in \cL (H, E)$. In fact, we shall consider a more general situation and 
allow that $G$ maps $H$ into a larger Banach space $\tilde{E}$. The driving process $W_H$ is an $H$-cylindrical Wiener process.
In order to stress the dependence on the coefficients, we will refer to equation \eqref{eq.semilinear} 
as equation $[A, F, G]$.

We note that since $F$ is merely assumed to be measurable, the notions of existence and uniqueness of solutions 
to \eqref{eq.semilinear} have to be understood in a weak sense. If for every initial datum $x \in E$ there exists a unique
solution to \eqref{eq.semilinear}, then we say that the equation is well-posed. We will make this notion precise 
in Section \ref{sect.well}.\medskip 

In the case where $E$ is itself a Hilbert space, equation \eqref{eq.semilinear} has been studied by several 
authors. In the case where $\bS$ is compact and $G$ is of trace class, existence of solutions to \eqref{eq.semilinear} 
has been proved by G\c{a}tarek and Go{\l}dys \cite{gg94a} for continuous $F$. Afterwards, \cite{gg94}, they also proved 
uniqueness of solutions, assuming the strong Feller property and a suitable gradient estimate for the transition 
group $\cT_\OU$ of equation $[A,0,G]$.

Chojnowska-Michalik and Go{\l}dys \cite{cmg95} extended these results, dropping the assumption that $G$ is of trace class
and allowing weakly continuous functions $F$.\smallskip 

In this article, we generalize these results to general Banach spaces $E$ and arbitrary bounded, measurable functions 
$F$. In our main result below, $H_{Q_t}$ refers to the Cameron-Martin space of a Gaussian measure $\mu_t$ 
which appears in the transition semigroup $\cT_\OU$ of equation $[A, 0, G]$; we recall the definition and 
further properties of $H_{Q_t}$ in Section \ref{sect.perturb}. 

\begin{thm}\label{t.main}
Let $\tilde{E}$ be a real, separable Banach space and $H$ be a separable Hilbert space, $A$ be the generator of a 
strongly continuous semigroup $\bS$ on $\tilde{E}$ and $G \in \cL (H,\tilde{E})$. Moreover, let $E$ be a real, separable 
Banach space with $D(A) \subset E \subset \tilde{E}$ with continuous and dense embeddings.

We assume that equation $[A, 0, G]$ is well-posed on $E$
and that for every $t>0$ we have $S(t)E \subset H_{Q_t}$ with 
\begin{equation}\label{eq.hyp1} 
\int_0^T\|S(t)\|_{\cL (E, H_{Q_t})}\, dt < \infty 
\end{equation}
for all $T>0$. Finally, we assume that $S(t) \in \cL (\tilde{E}, E)$ for all $t>0$, that for $x \in \tilde{E}$ the 
$E$-valued map $t \mapsto S(t)x$ is continuous on $(0,\infty)$ and that for all $T>0$ we have
\begin{equation}\label{eq.hyp2}
 \int_0^T \|S(t)\|_{\cL (\tilde{E}, E)}^2\, dt < \infty\, .
\end{equation}

Then, for every bounded, measurable $F: E \to E$, equation $[A,F,G]$ is well-posed. Furthermore, the associated 
transition semigroup $\scrP$ is strongly Feller and irreducible.
\end{thm}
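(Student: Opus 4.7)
The plan is to combine the Miyadera--Voigt perturbation theorem for strong Feller semigroups (developed in the body of the paper) with a probabilistic identification of the resulting semigroup as the transition semigroup of $[A, F, G]$. As a prerequisite I would verify that the transition semigroup $\cT_{\OU}$ of $[A, 0, G]$ is strongly Feller with a locally integrable gradient bound: the Cameron--Martin formula for the Gaussian kernel $\mu_t$ gives, for $\varphi \in B_b(E)$,
\begin{equation*}
 \|\nabla \cT_{\OU}(t)\varphi\|_\infty \le \|S(t)\|_{\cL(E, H_{Q_t})}\,\|\varphi\|_\infty ,
\end{equation*}
so hypothesis \eqref{eq.hyp1} is exactly the integrability I need, while \eqref{eq.hyp2} together with the $E$-continuity of $t \mapsto S(t)x$ keeps the stochastic convolution $E$-valued and lets me read $\cT_{\OU}(t)$ as a map from $B_b(\tilde E)$ into $C_b(E)$.

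Next I would apply the perturbation theorem, viewing the drift $F$ as a first-order perturbation $\varphi \mapsto \langle F, \nabla\varphi\rangle$ of the generator of $\cT_{\OU}$; boundedness of $F$ combined with the gradient estimate above makes this perturbation Miyadera-small in the strong Feller sense. This produces a semigroup $\scrP$ on $B_b(E)$ satisfying the variation-of-constants identity
\begin{equation*}
 \scrP(t)\varphi = \cT_{\OU}(t)\varphi + \int_0^t \cT_{\OU}(t-s)\bigl\langle F,\nabla \scrP(s)\varphi\bigr\rangle\, ds ,
\end{equation*}
with $\scrP$ itself strong Feller by the content of the perturbation theorem. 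Irreducibility of $\scrP$ is inherited from that of $\cT_{\OU}$ (which follows from $S(t)E \subset H_{Q_t}$) via a Neumann-series domination exploiting boundedness of $F$.

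It remains to identify $\scrP$ with the transition semigroup of weak solutions of $[A, F, G]$. Existence is obtained by a Girsanov argument: a weak solution of $[A, 0, G]$ becomes a weak solution of $[A, F, G]$ under an equivalent measure whose density is the exponential martingale associated with the shift encoding $F$, with boundedness of $F$ and the gradient bound supplying the Novikov-type integrability. Uniqueness in law is obtained by applying an Ito-type formula to $s \mapsto \scrP(t-s)\varphi(\bX(s))$ along any weak solution $\bX$, and using the above integral equation to conclude $\expect \varphi(\bX(t)) = \scrP(t)\varphi(x)$, which forces uniqueness of the one-dimensional marginals and hence of the law.

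The main obstacle is the closure of this Ito argument: since $F$ is only bounded and measurable, classical Ito calculus is unavailable, and one must exploit the regularity of $\scrP(t-s)\varphi$ delivered by the perturbation theorem --- specifically a quantitative, locally time-integrable gradient bound on $\scrP(t)\varphi$ --- to justify the chain rule along \emph{arbitrary} weak solutions, not just the one constructed via Girsanov. This interplay between the analytic perturbation machinery and the entire family of stochastic solutions is the technical heart of the proof.
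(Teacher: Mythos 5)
Your proposal founders on the existence part: the Girsanov argument is not available in this setting, and the paper's introduction explicitly flags this as the reason the existence result is new even for Hilbert spaces. To remove (or add) a drift by an equivalent change of measure you must write the drift as a shift of the cylindrical noise, i.e.\ you need $F(X(t)) = G\psi(X(t))$ for some bounded $H$-valued $\psi$, which amounts to requiring $F$ to take values in $G(H)$ (with $G^{-1}F$ bounded). Here $G \in \cL(H,\tilde{E})$ is an arbitrary bounded operator --- not invertible, not even assumed to have dense range --- while $F$ is an arbitrary bounded measurable map into $E$; so no exponential martingale encodes this drift, and "Novikov-type integrability" never enters because the candidate density does not exist. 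The paper instead obtains existence in three steps: a Banach fixed-point argument in $L^p(\Omega; C([0,T];E))$ for bounded \emph{Lipschitz} $F$; a stability theorem (Theorem \ref{t.limit}) showing that well-posedness survives bounded pointwise limits $F_n \to F$, proved by tightness of the laws $\PP_n$ (using compactness of $S(t)$, Proposition \ref{p.ou}) together with $\beta_0(E)$-convergence of the perturbed resolvents $(\lambda - \cL_\OU - B_n)^{-1}f \to (\lambda-\cL_\OU-B)^{-1}f$; and finally a bp-closure (monotone class) argument to reach all bounded measurable $F$. Nothing in your proposal substitutes for this machinery.

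The uniqueness half is also not closed: you correctly identify that applying an It\^o-type formula to $s \mapsto \scrP(t-s)\varphi(\bX(s))$ along an arbitrary weak solution is problematic when $F$ is merely measurable, but you leave this as "the technical heart" without an argument, so the proposal proves neither direction. The paper circumvents It\^o calculus entirely by working with martingale problems: the law of any mild martingale solution solves the martingale problem for $\cL_{[A,0,G]} + B$; the core results (Proposition \ref{p.core} and Lemma \ref{l.mart}) upgrade this to the martingale problem for the full generator $\cL_\OU + B$; and Theorem \ref{t.unique} (an Ethier--Kurtz type identification adapted to strong Feller semigroups, resting on $\sigma$-density of the generator's domain and the Post--Widder formula rather than strong continuity) then forces $\expect_\PP[f(\bx(t+s))\,|\,\cB_s] = \scrP(t)f(\bx(s))$, which gives uniqueness in law (Theorem \ref{t.uil}). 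This route requires no gradient bounds on $\scrP(t)\varphi$ and no chain rule along solutions, which is precisely what makes it work at this level of generality.
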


The estimate \eqref{eq.hyp1} implies that $\cT_\OU$ is strongly Feller and satisfies a gradient estimate. 
This is the natural generalization of the assumptions made in \cite{gg94, cmg95} to our more general setting.
On the other hand, estimate \eqref{eq.hyp2} is used in constructing solutions of equation $[A,F,G]$ from solutions 
of the associated local martingale problem.

These assumptions are satisfied in many important situations, in particular for the one-dimensional heat 
equation with space-time white noise. Examples are presented in Section \ref{sect.examples}.

We note that the assertion of existence of solutions for equation \eqref{eq.semilinear} for bounded, measurable 
$F$ appears to be new, even in the Hilbert space case. This is due to the fact that $G$ is not assumed to be invertible 
and hence we cannot invoke Girsanov's theorem to infer the existence of solutions.\medskip 

Consequently, there is, up to now, no systematic treatment of stochastic equations with arbitrary bounded, measurable 
drift. However, in the case where $E$ is a Hilbert space, there are several articles about equations with measurable 
drift under additional assumptions, typically a monotonicity assumption. Let us mention Da Prato and R\"ockner \cite{dpr02}, 
where the authors construct solutions as follows. First, they prove well-posedness of the associated Kolmogorov equation 
on a suitable $L^2$-space and then show that the associated semigroup is strongly Feller. Then they construct from these 
transition probabilities a Markov process in a canonical way which subsequently is shown to have a continuous modification.

A different approach is taken by Bogachev, Da Prato and R\"ockner in \cite{bdpr10, bdpr11}, where uniqueness of solutions 
is proven by establishing uniqueness for the associated Fokker-Planck equation, rather than for the Kolmogorov equation.
Existence of solutions is derived from Girsanov's Theorem in the case where $G$ is invertible but an existence result is 
missing in the general case.

Finally, let us mention the recent article by Da Prato, Flandoli, Priola and R\"ockner \cite{dpfpr11} where the authors, 
exploiting again Girsanov's theorem, establish pathwise uniqueness (rather than uniqueness in law) for equations with space-time white noise and 
bounded measurable drift.
\medskip 

The results obtained in this article can also be used to establish existence and uniqueness for equations with 
unbounded measurable drift under additional assumptions. This will be done elsewhere.\medskip 

Let us now describe our strategy for the proof of Theorem \ref{t.main}, this will also give an overview of this 
article.

Our main tool for the proof is a perturbation result (Theorem \ref{t.pert}) for strong Feller semigroups 
which may be of independent interest. This result is proved in Section \ref{sect.perturb} and is 
similar to a perturbation theorem for bi-continuous 
semigroups due to Farkas \cite{farkasSF}. The main novelty is that we do not perturb a semigroup on $C_b(E)$, the 
space of bounded, continuous functions on $E$, but on $B_b(E)$, the space of bounded measurable functions on $E$. 
Since the orbits of a bounded measurable function under a transition semigroup in general have 
no better regularity than some weak measurability, the proof of the perturbation result will be based on integration 
theory on norming dual pairs \cite{k10}. Working on $B_b(E)$ instead of $C_b(E)$ has two advantages: (i) we can 
allow perturbations taking values in $B_b(E)$ and (ii) we prove the strong Feller property of the 
perturbed semigroup along the way.

This perturbation result gives us a candidate $\scrP$ for the transition semigroup associated to 
equation $[A,F,G]$.\smallskip 

The actual proof of Theorem \ref{t.main} will be given in Section \ref{sect.well}. We first prove (Theorem \ref{t.uil})
that the distribution of a solution to \eqref{eq.semilinear} is uniquely determined by the perturbed semigroup $\scrP$ 
and the initial distribution of the solution. Thus uniqueness in law holds for \eqref{eq.semilinear}. Using that 
we know in advance the only possible transition semigroup for the equation, we prove in Theorem \ref{t.limit} 
that well-posedness of $[A,F,G]$ is stable under taking bounded and pointwise limits in $F$. This allows us 
to finish the proof of Theorem \ref{t.main} through a monotone class type argument.

\section{Preliminaries}\label{sect.prelim}

Throughout, $(M, d)$ is a complete, separable metric space. Its Borel $\sigma$-algebra is denoted by $\cB(M)$ and 
the spaces of scalar-valued Borel-measurable, bounded Borel-measurable, continuous 
and bounded continuous functions are denoted by $B(M), B_b(M), C(M)$ and $C_b(M)$ respectively. 
$\M (M)$ refers to the space of complex
measures on $(M, \cB (M))$ and  $\cP (M)$ to the subset of all probability measures. 
The symbol $\norm{}_\infty$ denotes the supremum norm and $\norm{}_\TV$ the total variation norm.

If $X$ and $Y$ are a Banach space, we write $\cL (X, Y)$ for the bounded linear operators 
from $X$ to $Y$. In the case $X = Y$ we write $\cL (X)$ shorthand for $\cL (X, X)$. If $\tau$ is a locally 
convex topology on $X$, then we write $\cL (X, \tau )$ for the algebra of $\tau$-continuous linear operators 
on $X$. 

\subsection{Kernel operators}
A \emph{kernel} on $(M, \cB (M))$ is a map $k : M\times \cB (M) \to \CC$ such that 
\begin{enumerate}
 \item $k(\cdot , A)$ is $\cB (M)$-measurable for all $A \in \cB (M)$;
 \item $k(x, \cdot ) \in \M (M)$ for all $x \in M$;
 \item $\sup_{x \in M} \|k(x, \cdot)\|_\TV < \infty$.
\end{enumerate}
If $k(x, \cdot ) \in \cP(M)$ for all $x \in M$, then $k$ is called a \emph{Markovian kernel}.

Given a kernel $k$ on $(M, \cB (M))$, we can define a bounded linear operator $\cT$ on $B_b(M)$ by
\begin{equation}\label{eq.kernelop}
  \cT f(x) := \int_M f(y)\, k(x, dy)\quad f \in B_b(M)\, , \, x \in M\,\, .
\end{equation}
An arbitrary operator $\cT \in \cL(B_b(M))$ is called \emph{kernel operator} if it is given in this way 
for some kernel $k$. In this case, $k$ is uniquely determined by $\cT$ and called the \emph{associated kernel}.

It is well-known that $\cT \in \cL (B_b(M))$ is a kernel operator if and only if
 $\cT f_n$ converges pointwise to $\cT f$ whenever $(f_n) \subset B_b(M)$ is a bounded sequence which converges 
pointwise to $f \in B_b(E)$. Yet another characterization of kernel operators can be given using 
the \emph{weak topology} $\sigma := \sigma (B_b(M), \M (M))$. $\cT \in \cL (B_b(M))$ is a kernel operator
if and only if $\cT \in \cL (B_b(M), \sigma )$, see \cite[Proposition 3.5]{k10}.  Moreover, 
$\cL (B_b(M), \sigma ) \subset \cL (B_b(M))$, i.e.\ a $\sigma$-continuous operator is necessarily bounded.
Note that a sequence $(f_n) \subset B_b(M)$ converges to $f$ with respect to $\sigma$ if and only if the 
sequence is bounded and converges pointwise. We write $\weak$ to indicate convergence with respect to $\sigma$.

For $\cT \in \cL (B_b(M), \sigma )$ we denote its 
$\sigma$-adjoint by $\cT'$. We note that $\cT'$ is the restriction of the norm adjoint $\cT^*$ to 
$\M (M)$; in fact, an operator $\cT \in \cL (B_b(M))$ is a kernel operator if and only if 
its norm adjoint leaves $\M (M)$ invariant.  We also note that $\cT'$ is an element of $\cL (\M (M))$ and we have
\[ \big( \cT'\mu )(A) = \int_M k(x, A)\, d\mu (x) \quad \forall\, A \in \B (M)\, ,\]
where $k$ is the kernel associated with $\cT$.

A kernel operator $\cT$ is called \emph{Markovian} if its associated kernel is Markovian; it is 
called a \emph{strong Feller operator} if $\cT B_b(M) \subset C_b(M)$. 

\subsection{Very weak integration}
Simple examples show that for integration of $B_b(M)$ or $\M (M)$-valued functions, the notion 
of Bochner integrability and even that of Pettis integrability is often too strong. We will therefore use
a weak notion of integrability studied in \cite{k10}.

A \emph{norming dual pair} is a pair $(X, Y)$ where $X$ is a Banach space and $Y$ is a norm-closed 
subspace of the dual $X^*$ which is norming for $X$, i.e.\ $\|x\| = \sup\{ |\dual{x}{y}|\, : \, 
y \in Y\, ,\, \|y\|\leq 1\}$.
We are interested in the situations where $X = B_b(M)$ or $X = C_b(M)$ and $Y = \M (M)$ and in the 
situation where $X = \M (M)$ and $Y = B_b(M)$.

Now let $(S, \mathscr{S}, m)$ be a $\sigma$-finite measure space. A function $\Phi : S \to X$ is called
\emph{scalarly $Y$-measurable} (\emph{scalarly $Y$-integrable}) if $\dual{\Phi}{y}$ is measurable (integrable) 
for all $y \in Y$. The function $\Phi$ is called $Y$-integrable if for all $A \in \mathscr{S}$ there exists an element 
$x_A \in X$ such that 
\[ \dual{x_A}{y} = \int_S \one_A(s)\dual{\Phi (s)}{y}\, dm (s) \quad \forall\, y \in Y\, . \]
In this case, $\int_A \Phi (s)\, dm (s) := x_A$ is called the \emph{$Y$-integral of $\Phi$ over $A$}.

If $(X, Y) = (C_b(M), \M (M))$ or $(X, Y) = (\M (M), B_b(M))$, then 
every scalarly $Y$-measurable function $\Phi$ such that $\|\Phi\|$ is majorized by a function in $L^1(S, dm )$
is $Y$-integrable, cf.\ \cite[Section 6]{k10}. 

\subsection{Strong Feller semigroups}

\begin{defn}\label{d.sf}
A family $\cT := (\cT (t))_{t\geq 0} \subset \cL (B_b(M), \sigma )$ is called \emph{strong Feller semigroup} if
\begin{enumerate}
 \item $\cT (0) = I$ and $\cT (t+s) = \cT (t) \cT(s)$ for all $s,t\geq 0$;
 \item For every $t>0$, $\cT (t)$ is a strong Feller operator;
 \item For all $f \in C_b(M)$ we have $\cT (t) f \weak  f$  as $t \downarrow 0$.
\end{enumerate}
$\cT$ is called \emph{Markovian} if $\cT (t)$ is Markovian for all $t\geq 0$. A Markovian strong Feller semigroup 
is called \emph{irreducible} if $\cT (t)\one_U(x)>0$ for all $x \in M$, $t>0$ and $U \subset M$ open.
\end{defn}

By \cite[Lemma 5.9]{k10}, assumptions (1) and (3) in the definition imply that $\cT$ is exponentially 
bounded, i.e.\ there exist constants $C \geq 1$ and $\omega \in \CR$ such that
$\|\cT(t)\|\leq Ce^{\omega t}$ for all $t\geq 0$; we will say that $\cT$ is of \emph{type} $(C, \omega )$.\

Given exponential boundedness, (3) is equivalent to $\cT (t)f \to f$ pointwise as $t\downarrow 0$, i.e.\ 
$\cT$ is \emph{stochastically continuous}. This assumption excludes pathological examples such as 
$\cT (0) = I$ and $\cT (t) = 0$ for all $t>0$.\smallskip 

By the results of \cite[Section 6]{k10}, $\cT$ is an integrable semigroup. This means that there 
exists a family $\cR = (\cR (\lambda ))_{\Re\lambda > \omega} \subset \cL (B_b(M), \sigma )$ such that
\[ \dual{\cR (\lambda )f}{\mu} = \int_0^\infty e^{-\lambda t}\dual{\cT (t)f}{\mu}\, dt \]
for all $f \in B_b(M), \mu \in \M (M)$ and $\lambda \in \CC$ with $\Re\lambda > \omega$.

Note that this is equivalent to the statement that $\cR (\lambda )f$ is the $\M (M)$-integral of 
$e^{-\lambda \cdot} \cT (\cdot )f$ over $(0,\infty )$ and $\cR (\lambda )'\mu$ is the 
$B_b(M)$-integral of $e^{-\lambda \cdot } \cT (\cdot )' \mu$ over $(0,\infty )$.\medskip 

$\cR$ is called the \emph{Laplace transform} of $\cT$. It was proved in \cite{k10} that $\cR$ is always 
a pseudo-resolvent, i.e.\ for $\lambda_1, \lambda_2$ with $\Re\lambda_1, \Re\lambda_2 > \omega$ we have
\[ (\lambda_1-\lambda_2)\cR(\lambda_2)\cR(\lambda_1) = \cR (\lambda_2) -\cR (\lambda_1)\, .\]
It follows that there exists a unique multi-valued operator $\cL \subset B_b(M) \times B_b(M)$ such that 
$\cR (\lambda ) =
(\lambda -\cL )^{-1}$ for all $\Re\lambda > \omega$, see \cite[Proposition A.2.4]{haase}. 
This equation is understood in the graph-sense, i.e.\ 
\[ \big\{ (f, \cR(\lambda )f )\, :\, f \in B_b(M)\big\} =
 \big\{ (f,g)\, : \, (g, \lambda g -f) \in \cL \big\}\, .
\]
For more information on pseudo-resolvents and multi-valued operators we refer to \cite[Appendix A]{haase}.

The operator $\cL$ will be called the \emph{full generator} of $\cT$. By \cite[Proposition 5.7]{k10}, 
$(f,g) \in \cL$ if and only if $\int_0^t \cT (s)g\, ds = \cT (t)f - f$, where the integral is an $\M (M)$-integral.
It follows that $\cL$ is the full generator in the sense of \cite{ek}.
\medskip 

We should mention that, in general, the operator $\cL$ is indeed multivalued. However, restricting the semigroup
to $C_b(M)$,  we can associate a single valued generator with $\cT|_{C_b(M)}$. In the language of norming dual pairs, 
we replace $(B_b(M), \M (M))$ by $(C_b(M), \M (M))$. It is easy to see that the generator of $\cT|_{C_b(M)}$, 
defined as above  via the Laplace transform, is exactly the part $\cL|_{C_b(M)} := \cL\cap (C_b(M)\times C_b(M))$
 of $\cL$ in $C_b(M)$. 
Since $\cT|_{C_b(M)}$ is $\sigma$-continuous at 0, it follows from \cite[Theorem 2.10]{k09} \
that $\cL|_{C_b(M)}$ is single-valued and sequentially $\sigma$-densely defined in $C_b(M)$; 
furthermore, $\cL|_{C_b(M)}$ is exactly the derivative with respect to $\sigma$ of $\cT$ at $0$.

We will also need some properties of the generator of $\cT'$. As we have noted above, the Laplace transform of 
$\cT'$ is $\cR'$. It is not hard to see that $\cR'$ is the resolvent of $\cL'$, the $\sigma$-adjoint 
of $\cL$.
Since $\cT'$ is $\sigma (\M (M), C_b(M))$-continuous at $0$, it follows similarly as above 
that $\cL'$ is single-valued and sequentially $\sigma (\M (M), C_b(M))$ densely defined.

\subsection{Cores}
Of particular importance for us will be the concept of a \emph{core} of the full generator. In our case, there 
are some subtleties due to the fact that we deal with multi-valued operators.
\begin{defn}
Let $\cT$ be a strong Feller semigroup with full generator $\cL$. A subset $\cD$ of $\cL$ is called 
a \emph{core} for $\cL$ if $\overline{\cD}^{\, \sigma\times \sigma} = \cL$.
\end{defn}
We now extend a well-known result from the theory of strongly continuous semigroups to our more general setting.

We note that if $D \subset D(\cL)$ is a subspace of $B_b(M)$ which is invariant under the semigroup 
$\cT$, then $\cT$ leaves the norm closure $\overline{D}$ invariant. Furthermore, $\cT|_{\overline{D}}$ is strongly 
continuous. This is an easy consequence of the fact that $t \mapsto \cT (t)f$ is $\|\cdot\|_\infty$-continuous 
for all $f \in D(\cL)$, see \cite[Remark 2.5]{k09}. 
If $L$ denotes the generator of $\cT|_{\overline{D}}$, then $(f, Lf) \in \cL$ for 
all $f \in D(L)$.

\begin{prop}\label{p.core}
Let $\cT$ be a strong Feller semigroup with full generator $\cL$ and $D$ be a subspace of $D(\cL)$ which is 
$\sigma$-dense in $B_b(M)$ and invariant under $\cT$. Then $\mathscr{D} := \{ (f, Lf)\, : f \in D\}$, with 
$L$ as above, is a core for $\cL$.
\end{prop}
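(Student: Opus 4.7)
The plan is to reduce the statement to the classical core theorem for strongly continuous semigroups applied to the restriction $\cT|_{\overline{D}}$, and then to bootstrap the resulting norm-density up to the required $\sigma\times\sigma$-density in the (possibly multi-valued) graph $\cL$ by a resolvent argument. The discussion preceding the proposition already shows that $\cT|_{\overline{D}}$ is a $C_0$-semigroup on $\overline{D}$ with generator $L$, and that $(f, Lf) \in \cL$ for every $f \in D(L)$. A key preliminary identification is $\cR(\lambda)|_{\overline{D}} = R(\lambda, L)$ for $\lambda > \omega$: for $h \in \overline{D}$ the orbit $s \mapsto e^{-\lambda s}\cT(s)h$ is norm-continuous and exponentially bounded, hence Bochner-integrable on $[0,\infty)$ in $\overline{D}$, and by the separating property of the dual pair $(B_b(M), \M(M))$ this Bochner integral coincides with the $\M$-integral defining $\cR(\lambda)h$. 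In particular $\cR(\lambda)(\overline{D}) = D(L)$, and combined with $f = \cR(\lambda)(\lambda f - g)$ for some $(f,g) \in \cL$ this also forces $D \subset D(L)$, so $\mathscr{D}$ is well-defined.

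With this in hand, $D$ is a $\cT|_{\overline{D}}$-invariant subspace of $D(L)$ which is, by definition, norm-dense in the Banach space $\overline{D}$. The classical core theorem for $C_0$-semigroups then yields that $D$ is a core for $L$, i.e.\ $\mathscr{D}$ is norm-dense (in the graph norm) in $\{(f, Lf) : f \in D(L)\}$. It remains to show that this latter graph is $\sigma\times\sigma$-dense in $\cL$, for then, since norm convergence implies $\sigma$-convergence, transitivity of closure delivers the desired $\sigma\times\sigma$-density of $\mathscr{D}$ in $\cL$.

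To obtain the remaining density, take $(\phi, \psi) \in \cL$, fix $\lambda > \omega$, and set $h := \lambda\phi - \psi$, so that $\cR(\lambda)h = \phi$. By the $\sigma$-density of $D$ in $B_b(M)$, choose a bounded sequence $(h_n) \subset D$ with $h_n \weak h$. Setting $f_n := \cR(\lambda) h_n$, the preliminary identification yields $f_n \in D(L)$ and $L f_n = \lambda f_n - h_n$. Since $\cR(\lambda) \in \cL(B_b(M), \sigma)$, one has $f_n \weak \phi$, whence $L f_n = \lambda f_n - h_n \weak \lambda\phi - h = \psi$. Thus $(f_n, Lf_n) \weak (\phi, \psi)$ in $\sigma\times\sigma$, completing the argument.

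The main obstacle is the preliminary identification, in particular verifying cleanly that $D \subset D(L)$ in the presence of the potential multi-valuedness of $\cL$: a pair $(f, g) \in \cL$ with $f \in D \subset \overline{D}$ need not a priori satisfy $g \in \overline{D}$, so one must carefully reconcile the $B_b(M)$-level multi-valued generator $\cL$ with the single-valued $\overline{D}$-level generator $L$ by exploiting the range identity $\cR(\lambda)(\overline{D}) = D(L)$. Everything else is a standard Hille--Yosida-style computation combined with the classical core theorem.
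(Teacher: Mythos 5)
Your proof is essentially correct but follows a genuinely different route from the paper's. The paper argues by contradiction via duality: a hypothetical point of $\cL$ outside $\overline{\mathscr{D}}^{\,\sigma\times\sigma}$ is separated from $\mathscr{D}$ by a pair $(\mu,\nu) \in \M(M)\times\M(M)$ (Hahn--Banach in $(B_b(M)\times B_b(M),\sigma\times\sigma)$); the invariance of $D$ under $\cT$, the Laplace transform, and the identification $R(\lambda,L)=\cR(\lambda)|_{\overline{D}}$ then upgrade the annihilation relation to the identity $\dual{\cR(\lambda)f}{\mu} = \dual{f-\lambda\cR(\lambda)f}{\nu}$ on the $\sigma$-dense set $D$, hence on all of $B_b(M)$, and evaluating at $f=\lambda f_0-g_0$ gives the contradiction. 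You instead give a direct two-step approximation: the classical core theorem for the $C_0$-semigroup $\cT|_{\overline{D}}$ yields graph-norm density of $\mathscr{D}$ in $\{(f,Lf):f\in D(L)\}$, a resolvent argument (approximating $\lambda\phi-\psi$ by elements $h$ of $D$ and passing to $\cR(\lambda)h \in D(L)$) yields $\sigma\times\sigma$-density of that graph in $\cL$, and transitivity of closure finishes. Both proofs turn on the same two facts --- $\cR(\lambda)|_{\overline{D}}=R(\lambda,L)$ and $\cR(\lambda)(\lambda\phi-\psi)=\phi$ for $(\phi,\psi)\in\cL$, combined with the $\sigma$-continuity of $\cR(\lambda)$ and the $\sigma$-density of $D$ --- but yours is constructive where the paper's is a soft separation argument; the price you pay is the detour through the classical core theorem (which forces you to confront $D\subset D(L)$ head-on) and an extra closure-transitivity step.

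Two caveats. First, a genuine but easily repaired slip: from the $\sigma$-density of $D$ you ``choose a bounded sequence $(h_n)\subset D$ with $h_n\weak h$''. Density with respect to the locally convex topology $\sigma$ only provides a \emph{net} $h_\alpha\to h$; it does not provide a (bounded) sequence, as $\sigma$-closures are not sequential in general. Your argument survives verbatim with nets --- $\sigma$-continuity of $\cR(\lambda)$ is net-continuity and no boundedness is needed anywhere --- and the paper itself works with nets in exactly such situations (Remark \ref{rem1}, proof of Theorem \ref{t.uil}), so you should phrase this step accordingly. Second, concerning what you call the main obstacle, the inclusion $D\subset D(L)$: your proposed derivation via $f=\cR(\lambda)(\lambda f-g)$ is indeed incomplete for precisely the reason you flag ($\lambda f-g$ need not lie in $\overline{D}$ when $\cL$ is multi-valued), and your closing sentence does not actually resolve it. You are, however, not worse off than the paper here: its proof also pairs $Lf$ and $L\cT(t)f$ against $\nu$ for $f\in D$, so it silently uses the same inclusion, which is best regarded as part of the well-formedness of the statement $\mathscr{D}=\{(f,Lf):f\in D\}$ rather than something either proof establishes.
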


\begin{proof}
Suppose that there is some $(f_0, g_0) \in \cL$ which does not belong to the $\sigma\times \sigma$-closure of $\cD$.
By the Hahn-Banach theorem, applied in the locally convex space $(B_b(M)\times B_b(M), \sigma\times\sigma)$, 
there exists $(\mu, \nu ) \in \M (M)\times \M (M)$ such that
\begin{eqnarray}
 0 & = & \dual{f}{\mu} + \dual{Lf}{\nu}\quad \forall\, f \in D \label{eq.1}\\
0 & \neq & \dual{f_0}{\mu} + \dual{g_0}{\nu}\label{eq.2} .
\end{eqnarray}
Since $\cT D \subset D$, equation \eqref{eq.1} yields for $f \in D$ and $t\geq 0$ 
\[ \dual{\cT (t)f}{\mu} = - \dual{L \cT (t)f}{\nu}\, .\]
Multiplying with $e^{-\lambda t}$ for $\lambda$ large enough and integrating over $(0,\infty )$, it follows that
\[ \dual{R(\lambda, L) f}{\mu} = - \dual{L R(\lambda, L) f}{\nu} = - \dual{\lambda R(\lambda, L)f -f}{\nu}\, , \]
where $R(\lambda, L)$ is the resolvent of $L$. Since $R(\lambda, L) = \cR (\lambda )|_{\overline{D}}$, we have
\[ \dual{\cR (\lambda )f}{\mu} = \dual{f-\lambda \cR(\lambda )f}{\nu} \]
for all $f \in D$.
By the $\sigma$-density of $D$ and the $\sigma$-continuity of $\cR (\lambda )$, this 
equality remains valid for arbitrary $f \in B_b(M)$.

Now observe that $\cR (\lambda ) (\lambda f_0 - g_0) = f_0$. Using the above equation for $f = \lambda f_0 - g_0$, we obtain 
$\dual{f_0}{\mu} = \dual{g_0}{\nu}$ --- a contradiction to \eqref{eq.2}.
\end{proof}
 
\section{Perturbation of Strong Feller semigroups}\label{sect.perturb}

In our perturbation theorem, we will make the following assumptions:

\begin{hyp}\label{hyp1}
Let $\cT = (\cT(t))_{t\geq 0}$ be a strong Feller semigroup with full generator $\cL$. 
We assume that $\cT$ is of type $(C, \omega )$ and denote the Laplace transform of $\cT$ by 
$\cR = (\cR (\lambda ))_{\Re\lambda > \omega }$. Furthermore, let $B : D(B) \to B_b(M)$ 
be a linear operator with $D(\cL ) \subset D(B)$, enjoying the following properties
\begin{enumerate}
 \item $B\cR (\lambda ) \in \cL (B_b(M), \sigma )$ for one (equivalently, all) $\Re\lambda > \omega$;
 \item for every $t >0$ the map $B \cT (t)$, initially defined on $D(\cL)$, has a (necessarily unique) extension
to an operator in $\cL (B_b(M), \sigma )$; by slight abuse of notation, we will denote this extension still 
by $B\cT (t)$;
 \item $s \mapsto B\cT(s)f$ is scalarly $\M (M)$-measurable for all $f \in B_b(M)$;
 \item There is a function $\varphi \in L^1_\mathrm{loc}([0,\infty)$ such that $\|B\cT(s)\| \leq \varphi (s)$ for all 
$s >0$.
\end{enumerate}
\end{hyp}

\begin{rem}\label{rem1}
Requirement (1) is equivalent to the statement that if $(f_\alpha, g_\alpha )$ is a net in $\cL$ such that 
$f_\alpha \weak f, g_\alpha \weak  g$, then $Bf_\alpha \weak Bf$. 
\end{rem}

\begin{thm}\label{t.pert}
Assume Hypothesis \ref{hyp1}. Then $\cL + B$ is the full generator of a strong Feller semigroup 
$\scrP = (\scrP (t))_{t\geq 0}$ which satisfies the integral equation
\begin{equation}\label{eq.inteq}
 \scrP (t) f = \cT (t)f + \int_0^t \scrP (t-s)B\cT(s)f\, ds \quad f \in B_b(M)\,.
\end{equation}
Here the integral is understood as a $\M (M)$-integral.
\end{thm}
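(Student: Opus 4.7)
The plan is to imitate the classical Dyson--Phillips construction from the theory of perturbed $C_0$-semigroups, transplanted to the norming dual pair $(B_b(M), \M(M))$. I set $\scrP_0(t) := \cT(t)$ and inductively define
\[
  \scrP_{n+1}(t) f := \int_0^t \scrP_n(t-s) B\cT(s) f\, ds,
\]
where the integral is the $\M(M)$-integral of Section~\ref{sect.prelim}. To justify this, I would verify inductively that $\scrP_n(t) \in \cL(B_b(M), \sigma)$, so that $\scrP_n(t-s)'$ acts on $\M(M)$; the identity $\dual{\scrP_n(t-s) B\cT(s)f}{\mu} = \dual{B\cT(s)f}{\scrP_n(t-s)'\mu}$ combined with Hypothesis~\ref{hyp1}(3) then delivers scalar $\M(M)$-measurability of the integrand, while Hypothesis~\ref{hyp1}(4) together with an inductive norm estimate $\|\scrP_n(t)\| \leq M_t \Phi(t)^n$ (where $\Phi(t) := \int_0^t \varphi(s)\,ds$ and $M_t$ is locally bounded) provides the required integrable majorant.

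Since $\varphi \in L^1_{\mathrm{loc}}$, I can choose $t_0 > 0$ with $M_{t_0}\Phi(t_0) < 1$, so that on $[0, t_0]$ the series $\scrP(t) := \sum_{n \geq 0} \scrP_n(t)$ converges in operator norm to an element of $\cL(B_b(M), \sigma)$; the limit remains $\sigma$-continuous because $\cL(B_b(M), \sigma)$ is norm-closed in $\cL(B_b(M))$, which follows from $\M(M)$ being a norm-closed subspace of $B_b(M)^*$. Summing the defining relations and invoking dominated convergence for $\M(M)$-integrals yields \eqref{eq.inteq} on $[0, t_0]$; the semigroup law $\scrP(s+t) = \scrP(s)\scrP(t)$ on $[0, t_0]$ reduces to a Fubini calculation from \eqref{eq.inteq}, and iterating extends both $\scrP$ and \eqref{eq.inteq} to all $t \geq 0$.

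For the strong Feller property, I argue inductively that $\scrP_n(t)f \in C_b(M)$ for every $f \in B_b(M)$. For $\scrP_1(t)f = \int_0^t \cT(t-s)[B\cT(s)f]\,ds$ the pointwise integrand is continuous in $x$ because $\cT(t-s)$ is strong Feller for $s \in [0,t)$, and $x \mapsto \scrP_1(t)f(x)$ is continuous by dominated convergence using the majorant $M_t\varphi(s)\|f\|$; the inductive step is identical since each $\scrP_n(t-s)$ maps $B_b(M)$ into $C_b(M)$. Uniform convergence of $\sum_n \scrP_n(t)f$ on $M$ gives $\scrP(t)f \in C_b(M)$. Stochastic continuity at $0$ is then immediate from \eqref{eq.inteq}: for $f \in C_b(M)$, $\cT(t)f \weak f$ by hypothesis on $\cT$, while the integral term pairs against any $\mu \in \M(M)$ to give an integral over the shrinking interval $[0,t]$ of an $L^1_{\mathrm{loc}}$-dominated function, which vanishes as $t \downarrow 0$.

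The step I expect to be the most delicate is identifying the full generator of $\scrP$ as the multivalued operator $\cL + B := \{(f, g + Bf) : (f, g) \in \cL\}$. Writing $R(\lambda)$ for the Laplace transform of $\scrP$ and applying Laplace transforms to \eqref{eq.inteq} gives $R(\lambda) = \cR(\lambda) + R(\lambda) B \cR(\lambda)$; since $\|B\cR(\lambda)\| \leq \int_0^\infty e^{-\Re\lambda s}\varphi(s)\,ds < 1$ for $\Re\lambda$ sufficiently large, this rearranges to $R(\lambda) = \cR(\lambda)(I - B\cR(\lambda))^{-1}$. The graph-sense computation
\[
  (f, h) \in \cL + B \iff \cR(\lambda)(\lambda f - h + Bf) = f \iff R(\lambda)(\lambda f - h) = f
\]
then exhibits $R(\lambda)$ as the resolvent of $\cL + B$ in the sense of \cite[Appendix A]{haase}, completing the identification. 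The subtlety is that $\cL$ is genuinely multivalued, forcing care when passing between operator identities and graph inclusions; if this direct route proves awkward, an alternative is to first restrict to $C_b(M)$, where the generator is single-valued, and then transfer the information back to $B_b(M)$ via Proposition~\ref{p.core}.
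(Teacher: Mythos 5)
Your overall architecture (Dyson--Phillips iterates, Neumann series for $I - B\cR(\lambda)$, graph-sense identification of the generator) is reasonable, and your generator identification is essentially the paper's Lemma \ref{l.BR}(3). But there is a genuine gap at the point you pass over most quickly: the well-definedness of the iterated integrals. You claim that the identity $\dual{\scrP_n(t-s)B\cT(s)f}{\mu} = \dual{B\cT(s)f}{\scrP_n(t-s)'\mu}$ ``combined with Hypothesis \ref{hyp1}(3) delivers scalar $\M(M)$-measurability of the integrand.'' It does not: Hypothesis \ref{hyp1}(3) gives measurability of $s \mapsto \dual{B\cT(s)f}{\nu}$ for each \emph{fixed} $\nu \in \M(M)$, whereas in your integrand the pairing measure $\scrP_n(t-s)'\mu$ \emph{moves with} $s$. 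Scalar measurability of a function $\Phi$ paired against a varying family of measures $\nu_s$ is not automatic, and nothing in your induction controls the $s$-dependence of $\scrP_n(s)'\mu$; without such control the iterates $\scrP_{n+1}$ are not even defined as $\M(M)$-integrals. The same problem silently infects your Fubini step for the semigroup law and your Laplace-transform computation, which integrate integrands of exactly the same type.

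This measurability issue is precisely what the paper's proof is engineered around. It introduces the Banach space $\fX([0,t])$ (Definition \ref{def.1}) whose elements satisfy, besides uniform boundedness and the strong Feller property, the condition that $s \mapsto \cF(s)'\mu$ is right-continuous in total variation for every $\mu \in D(\cL')$. In Lemma \ref{l.voltop} this TV-right-continuity is used to approximate $\cF(s-\cdot)'\mu$ by simple functions, to each of which Hypothesis \ref{hyp1}(3) applies measure-by-measure, yielding measurability of the integrand for $\mu \in D(\cL')$; the extension to arbitrary $\mu \in \M(M)$ then uses that the integrand takes values in $C_b(M)$ (strong Feller) together with the sequential $\sigma(\M(M), C_b(M))$-density of $D(\cL')$. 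Crucially, the lemma also shows the Volterra operation \emph{preserves} all three defining properties of $\fX([0,t])$ --- which is exactly the inductive statement your proposal would need for each $\scrP_n$ but never formulates or proves. (That the paper then takes the resolvent $R(1,\fV_{t_0})$ of the Volterra operator instead of summing the Dyson--Phillips series is a cosmetic difference; the substance is the space $\fX$.) To repair your proof you would have to add, as an induction hypothesis, the TV-right-continuity of $\scrP_n(\cdot)'\mu$ for $\mu \in D(\cL')$ and reproduce the two-step measurability argument --- at which point you will have rebuilt Lemma \ref{l.voltop}.
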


\begin{example}
(Adding a potential term)

If $\cT$ is a strong Feller semigroup and $B \in \cL(B_b(M), \sigma )$, then $\cT$ together with 
$B$ satisfies Hypothesis \ref{hyp1}. Indeed, $B\cT(s), B\cR(\lambda ) \in \cL(B_b(M), \sigma )$ as composition of two 
kernel operators. Since $\dual{B\cT(s)f}{\mu} = \dual{\cT(s)}{B'\mu}$, the scalar $\M (M)$-measurability 
of $s \mapsto BT(s)f$ follows from that of $s \mapsto T(s)f$. Condition (4) in Hypothesis 
\ref{hyp1} follows from the boundedness of $B$ and the exponential boundedness of $\cT$.

As an example for $B$, let $V \in B_b(M)$ and put $Bf(x) = V(x)f(x)$. Then 
$B \in L(B_b(M), \sigma )$ as it is associated with the kernel $k$, given by 
$k(x, \cdot ) = V(x)\delta_x(\cdot)$, where $\delta_x$ is the Dirac measure in $x$.
\end{example}

\subsection{Proof of the perturbation theorem}

Similar to the perturbation result of \cite{farkasSF}, and also similar to the proof of 
the classical Miyadera-Voigt perturbation theorem, the proof of Theorem \ref{t.pert} depends 
on a fixed point argument. We begin by introducing a Banach space $\fX$ and an operator $\fV : \fX \to \fX$
such that the perturbed semigroup $\scrP$ is given as a fixed point of $\fV$.

\begin{defn}\label{def.1}
Let $\cT$ be a strong Feller semigroup with full generator $\cL$ and $t>0$.
We define $\fX ([0,t])$ as the space of all functions $\cF: [0,t] \to \cL(B_b(M), \sigma)$ such that
\begin{enumerate}
 \item $\sup_{s \in [0,t]}\|\cF(s)\| <\infty$;
 \item $\cF(s)$ is a strong Feller operator for all $s \in (0, t]$;
 \item for every $\mu \in D(\cL')$ the map $[0,t] \ni s \mapsto \cF(s)'\mu$ is right-continuous in 
the total variation norm.
\end{enumerate}
\end{defn}

Using that the limit of strong Feller operators in the operator norm is again a strong Feller operator, it is 
easy to see that $\fX ([0,t])$ is a Banach space for the norm $\|\cdot\|_\fX$ defined by 
$\|\cF\|_\fX := \sup_{s \in [0,t]}\|\cF(s)\|_{L(B_b(E))}$.  

Under Hypothesis \ref{hyp1}, we define for $\cF \in \fX ([0,t])$ 
\begin{equation}\label{eq.volterra}
[\fV_t \cF](s)f := \int_0^s \cF(s-r)B\cT(r)f\, dr\, , \quad f \in B_b(E)\, ,\, s \in [0,t]\, ,
\end{equation}
where the integral is understood as a $\mathcal{M}(M)$-integral. The proof that this integral exists is 
part of the following

\begin{lem}\label{l.voltop}
Under Hypothesis \ref{hyp1}, $\fV_t$ defines a bounded linear operator from $\fX ([0,t])$ into itself with
$\|\fV_t\| \leq \int_0^t\varphi (s) ds$.
\end{lem}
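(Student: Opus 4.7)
The plan is to verify in turn the three defining properties of $\fX([0,t])$ for $\fV_t\cF$ and the operator-norm bound. The crux is to establish the existence of the $\M(M)$-integral defining $[\fV_t\cF](s)f$; the remaining points will then reduce to dominated-convergence arguments.

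Fix $s \in (0,t]$ and $f \in B_b(M)$, and consider $\Phi_s(r) := \cF(s-r) B\cT(r)f$ on $[0,s)$. Since $\cF(s-r)$ is strong Feller there, $\Phi_s$ takes values in $C_b(M)$, and $\|\Phi_s(r)\|_\infty \leq \|\cF\|_\fX \|f\|_\infty\, \varphi(r) \in L^1([0,t])$ by Hypothesis \ref{hyp1}(4). The delicate point is scalar $\M(M)$-measurability. For $\mu \in D(\cL')$, the TV-right-continuity of $u \mapsto \cF(u)'\mu$ translates into left-continuity in $r$ of $\nu(r) := \cF(s-r)'\mu$, so $\nu$ is the pointwise TV-limit of its left-endpoint dyadic step approximants $\nu_n$. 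Consequently
\[
\dual{\Phi_s(r)}{\mu} = \dual{B\cT(r)f}{\nu(r)} = \lim_n \sum_{k} \one_{[ks/2^n,\, (k+1)s/2^n)}(r)\,\dual{B\cT(r)f}{\nu(ks/2^n)},
\]
a pointwise limit of functions measurable in $r$ by Hypothesis \ref{hyp1}(3). For an arbitrary $\mu \in \M(M)$ I would invoke the sequential $\sigma(\M(M), C_b(M))$-density of $D(\cL')$ recalled in the preliminaries, together with the crucial fact that $\Phi_s(r) \in C_b(M)$: approximating $\mu$ by $\mu_n \in D(\cL')$ yields $\dual{\Phi_s(r)}{\mu_n} \to \dual{\Phi_s(r)}{\mu}$ pointwise in $r$, so measurability passes to the limit. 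The integrability criterion of \cite[Section 6]{k10} for the norming pair $(C_b(M), \M(M))$ then produces $[\fV_t\cF](s)f \in C_b(M)$; I set $[\fV_t\cF](0) := 0$.

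Pairing with $\delta_x$ gives $|[\fV_t\cF](s)f(x)| \leq \|\cF\|_\fX\|f\|_\infty \int_0^t \varphi(r)\,dr$, yielding both boundedness on $B_b(M)$ and the estimate $\|\fV_t\| \leq \int_0^t\varphi(r)\,dr$. For $[\fV_t\cF](s) \in \cL(B_b(M),\sigma)$, i.e.\ the kernel-operator property, I would use that $f_n \weak f$ in $B_b(M)$ implies $B\cT(r)f_n \weak B\cT(r)f$ by Hypothesis \ref{hyp1}(2) and hence $\cF(s-r)B\cT(r)f_n(x) \to \cF(s-r)B\cT(r)f(x)$ pointwise with dominator $\|\cF\|_\fX \sup_n\|f_n\|_\infty\,\varphi(r) \in L^1$; dominated convergence upgrades this to pointwise convergence of $[\fV_t\cF](s)f_n$ to $[\fV_t\cF](s)f$. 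Property (2) of Definition \ref{def.1} is automatic, since $[\fV_t\cF](s)f$ already lies in $C_b(M)$ for $s>0$.

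For the right-continuity (3), given $\mu \in D(\cL')$ and small $h > 0$ I would split
\[
[\fV_t\cF](s+h)'\mu - [\fV_t\cF](s)'\mu = \int_0^s (B\cT(r))'[\cF(s+h-r)'\mu - \cF(s-r)'\mu]\,dr + \int_s^{s+h}(B\cT(r))'\cF(s+h-r)'\mu\,dr.
\]
The second piece is bounded in TV by $\|\cF\|_\fX\|\mu\|_\TV \int_s^{s+h}\varphi(r)\,dr \to 0$. In the first, the TV-right-continuity of $\cF(\cdot)'\mu$ at each point $s-r$ gives pointwise convergence to zero in $r$, with $2\|\cF\|_\fX\|\mu\|_\TV\varphi(r)$ as an $L^1$ dominator, so dominated convergence finishes the argument. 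The main obstacle throughout is the measurability question for general $\mu \in \M(M)$ in step one: it is precisely here that the strong Feller property of $\cF(s-r)$, as opposed to mere membership in $\cL(B_b(M),\sigma)$, becomes indispensable.
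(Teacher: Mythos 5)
Your proposal is correct and follows essentially the same route as the paper's proof: scalar measurability first for $\mu \in D(\cL')$ via step-function approximation of $\cF(s-\cdot)'\mu$ (your explicit dyadic approximants are just a concrete version of the paper's simple-function argument), then extension to general $\mu \in \M(M)$ through the $\sigma(\M(M),C_b(M))$-density of $D(\cL')$ and the strong Feller property, followed by the same dominated-convergence arguments for the kernel-operator property and the TV-right-continuity. The only cosmetic difference is that you phrase the right-continuity estimate on the adjoint side rather than taking a supremum over $\|f\|_\infty\leq 1$, which is equivalent.
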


\begin{proof}
We first prove that for $\cF \in \fX ([0,t]), f \in B_b(E)$ and $s \in [0,t]$ the 
integral in \eqref{eq.volterra} exists as an $\M (M)$-integral.

Let $\mu \in D(\cL')$. Then $r \mapsto \dual{\cF(s-r)B\cT(r)f}{\mu}$ is measurable. To see this, 
observe that since $r \mapsto \cF (r)'\mu$ is right-continuous with respect to the total variation norm, 
it is strongly measurable. Hence, there exists a sequence $(\Phi_k)_{k\in\CN}$ of simple functions, 
say $\Phi_k(r) = \sum_{j=1}^{n_k} \one_{A_{jk}}(r)\nu_{jk}$, which converges to $\cF (s-\cdot )'\mu$, pointwise
in the total variation norm. By (3) in Hypothesis \ref{hyp1},  $\dual{B\cT (r)}{\nu_{jk}}$ is a measurable 
function of $r$ for all $k \in \CN$ and $j=1, \ldots, n_k$. Since $\Phi_k$ converges in norm 
to $\cF (s-\cdot )'\mu$, it follows that for $r \in (0,s)$
\[ \dual{\cF (s-r)B\cT (r)f}{\mu} = \lim_{k\to \infty} \sum_{j=1}^{n_k}\one_{A_{jk}}(r)\dual{B\cT (r)}{\nu_{jk}}\, , \]
proving the claimed measurability.

Now let $\mu \in \M (M)$ be arbitrary. By the sequential $\sigma (\M (M), C_b(M))$-density of $D(\cL')$, there 
exists a sequence $(\mu_n) \subset D(\cL')$ which converges to 
$\mu$ with respect to $\sigma (\M(M), C_b(M))$. Note that since $\cF (r)$ is a strong Feller operator for 
$r \in (0,t]$, we have $F(s-r)BT(r)f \in C_b(E)$ for all $r \in [0,s)$. Consequently,
\[ \dual{\cF (s-r)B\cT(r)f}{\mu} = \lim_{n\to \infty}\dual{\cF (s-r)B\cT(r)f}{\mu_n} \quad 
 \forall\, r \in [0,s)\, .
\]
This proves that $r \mapsto \cF (s-r)B\cT(r)f$ is scalarly $\M (M)$-measurable.

Next observe that $\|\cF (s-r)B\cT (r)f\|_\infty \leq  \varphi (r) \|f\|_\infty\|\cF\|_\fX$. The results of 
\cite{k10} imply that the $C_b(M)$-valued function $\cF (s-\cdot )B\cT (\cdot )f$ is $\M (M)$-integrable; in 
particular, its integral over $(0,s)$ defines an element of $C_b(M)$. 
Furthermore,  $\|\fV_t\| \leq \sup_{s \in (0,t)} \int_0^s \varphi (r)\, dr =
\int_0^t \varphi (r)\, dr$ as claimed follows once we prove that $\fV_t$ maps $\fX ([0,t])$ into itself.\medskip 

It remains to prove that $\fV_t\cF \in \fX ([0,t])$ for all $\cF \in  \fX ([0,t])$.

By the above, $f \mapsto [\fV_t \cF](s)f$ defines a bounded linear operator from 
$B_b(M)$ to $C_b(M)$. Let $f_n \weak f$. 
Since $B\cT(r), \cF (s-r) \in \cL (B_b(M), \sigma )$, the dominated convergence theorem yields
for $x \in M$
\[ 
\begin{aligned}
\big([\fV_t\cF](s)f_n\big)(x) = & \int_0^s \dual{\cF (s -r)B\cT(r)f_n}{\delta_x}\, dr\\ 
& \to 
 \int_0^s \dual{\cF (s -r)B\cT(r)f}{\delta_x}\, dr = \big([\fV_t\cF](s)f)(x) \,.
\end{aligned}
\]
This shows that $[\fV_t F](s)$ is sequentially $\sigma$-continuous and hence a kernel operator.\smallskip

Finally, let $0\leq s_1<s_2\leq t$ and 
$f \in B_b(E), \mu \in D(\cL')$ be given. We have
\begin{eqnarray*}
&& | \dual{[\fV_t \cF](s_2)f - [\fV_t \cF] (s_1)f}{\mu}|\\
& \leq & \int_{s_1}^{s_2}|\dual{\cF(s_2-r)B\cT(r)f}{\mu}|\, dr \\
& & \quad + \Big| \int_0^{s_1}\langle B\cT(r)f\, dr, \cF(s_2-r)'\mu - \cF(s_1-r)'\mu\rangle\, dr\Big|\\
& \leq & \|\cF\|_\fX \int_{s_1}^{s_2}\varphi (r) \|f\|_\infty\|\mu\|_\TV\, dr\\
&&\quad
+ \|f\|_\infty \int_0^{s_1} \varphi (r) \|\cF(s_2-r)\mu -\cF(s_1-r)\mu\|_\TV\, dr\, .  
\end{eqnarray*}
Taking the supremum over $f \in B_b(E)$ with $\|f\|\leq 1$, we find
\[ 
\begin{aligned}
\| [\fV_t \cF](s_2)'\mu & - [\fV_t \cF](s_1)'\mu \|_\TV 
 \leq \|\mu\|_\TV \|\cF\|_\fX \int_{s_1}^{s_2}\varphi (r)\, dr\\ + 
&\int_0^{s_1} \varphi (r) \|\cF(s_2 -r)\mu -\cF(s_1-r)\mu\|_\TV\, dr \, .
\end{aligned}
\]
Clearly, $\int_{s_1}^{s_2}\varphi (r)\, dr \to 0$ as $s_2 \downarrow s_1$. 
Since $\mu \in D(\cL')$, the function $\cF(\cdot )'\mu$ is right-continuous in the total variation 
norm. Thus the integrand in the second integral converges pointwise to 0. By dominated convergence, 
also this second integral converges to 0 and it follows that $s \mapsto [\fV_t \cF](s)'\mu$ is right-continuous 
in the total variation norm for $\mu \in D(\cL')$.
\end{proof}

From the norm estimate $\|\fV_t\| \leq \int_0^t\varphi (r)\, dr$, we immediately obtain:

\begin{cor}\label{c.res}
$\fV_t$ converges to $0$ in the operator norm. In particular, $1 \in \rho (\fV_t)$ for $t$ small enough. 
\end{cor}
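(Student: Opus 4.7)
The plan is very short, since the corollary is a direct consequence of the norm bound established in Lemma \ref{l.voltop}. My strategy has two steps.

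First, I would invoke the bound $\|\fV_t\| \leq \int_0^t \varphi(r)\, dr$ proved in Lemma \ref{l.voltop}. Since by condition (4) in Hypothesis \ref{hyp1} the function $\varphi$ belongs to $L^1_{\mathrm{loc}}([0,\infty))$, the indefinite integral $t \mapsto \int_0^t \varphi(r)\, dr$ is absolutely continuous and vanishes at $0$. Consequently, $\|\fV_t\| \to 0$ as $t \downarrow 0$, which is the first assertion.

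Second, for the conclusion $1 \in \rho(\fV_t)$, I would simply observe that whenever $t > 0$ is small enough that $\|\fV_t\| < 1$, the operator $I - \fV_t$ is boundedly invertible on $\fX([0,t])$ by the standard Neumann series
\[
(I - \fV_t)^{-1} = \sum_{k=0}^\infty \fV_t^k,
\]
which converges in the operator norm of $\cL(\fX([0,t]))$.

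There is no real obstacle here; the only subtle point worth mentioning is that the Banach space on which the Neumann series is formed, namely $\fX([0,t])$ with the norm $\|\cdot\|_\fX$, depends on $t$, but this causes no issue because the norm estimate from Lemma \ref{l.voltop} is intrinsic to each space $\fX([0,t])$ and the bound $\int_0^t \varphi(r)\, dr$ is independent of the particular $\cF$ used. The corollary then sets the stage for defining the perturbed semigroup $\scrP$ as the (unique) fixed point obtained from $(I - \fV_t)^{-1}$ applied to the unperturbed semigroup restricted to $[0,t]$, which will be carried out in the subsequent argument.
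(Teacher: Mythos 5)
Your proposal is correct and follows exactly the paper's route: the paper derives the corollary as an immediate consequence of the norm estimate $\|\fV_t\| \leq \int_0^t \varphi(r)\,dr$ from Lemma \ref{l.voltop}, with the vanishing of the integral (by $\varphi \in L^1_{\mathrm{loc}}$) and the Neumann series argument left implicit. Your spelling out of these two standard steps, and the remark on the $t$-dependence of $\fX([0,t])$, is fine and introduces nothing different.
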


We now study the interaction of $B$ with the Laplace transform $\cR = (\cR (\lambda ))_{\Re\lambda > \omega}$.

\begin{lem}\label{l.BR}
Assume Hypothesis \ref{hyp1}. Then
\begin{enumerate}
 \item For $\Re \lambda > \max\{\omega, 0\}$, we have
\begin{equation}\label{eq.BRint}
  B\cR(\lambda)f = \int_0^\infty e^{-\lambda t}B\cT(t)f\, dt\quad f \in B_b(M)\, ,
\end{equation}
where the integral exists as a $\M (M)$-integral.
\item For $\Re\lambda$ large enough, $\|B\cR(\lambda)\| < 1$.
\item For $\Re\lambda$ large enough, $\lambda \in \rho (\cL + B)$ and
\begin{equation}\label{eq.resolvent}
  (\lambda -\cL - B)^{-1} = \cR(\lambda)(I - B\cR(\lambda))^{-1}\,\, .
\end{equation}
\end{enumerate}
\end{lem}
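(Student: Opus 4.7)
The plan is to prove the three assertions in order, each leveraging the previous.

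For (1), I would first establish existence of the $\M(M)$-integral $\int_0^\infty e^{-\lambda t}B\cT(t)f\, dt$ for every $f \in B_b(M)$. Scalar $\M(M)$-measurability of $t \mapsto B\cT(t)f$ is supplied by Hypothesis~\ref{hyp1}(3). For integrability I would split the interval as $(0,1] \cup [1,\infty)$: on $(0,1]$ the bound $\|B\cT(t)\| \le \varphi(t)$ with $\varphi \in L^1_{\mathrm{loc}}$ is enough, while on $[1,\infty)$ I would factor $B\cT(t) = B\cT(1)\cT(t-1)$ and use that $\cT$ is of type $(C,\omega)$ to obtain $\|B\cT(t)\| \le C\|B\cT(1)\|e^{\omega(t-1)}$, which combined with $e^{-\Re\lambda t}$ for $\Re\lambda > \max\{\omega,0\}$ gives integrability at infinity. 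The norming-dual-pair theory from \cite[Section~6]{k10} then produces an element $K(\lambda)f \in B_b(M)$, and a dominated-convergence check shows $f \mapsto K(\lambda)f$ lies in $\cL(B_b(M),\sigma)$. To identify $K(\lambda)$ with $B\cR(\lambda)$, it suffices by $\sigma$-continuity of both to verify equality on the $\sigma$-dense subspace $D(\cL)$. For $f \in D(\cL)$ the orbit $t\mapsto \cT(t)f$ is $\|\cdot\|_\infty$-continuous, so $\cR(\lambda)f$ is in fact a Bochner integral in $B_b(M)$; approximating by Riemann sums $g_n \in D(\cL)$ (whose values lie in $D(B)$) and invoking the $\sigma$-closedness of $B$ on the range of $\cR(\lambda_0)$ provided by Hypothesis~\ref{hyp1}(1) and Remark~\ref{rem1} lets me pass $B$ through the integral.

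For (2), the inequality
\[
\|B\cR(\lambda)f\|_\infty \le \|f\|_\infty \int_0^\infty e^{-\Re\lambda\, t}\|B\cT(t)\|\, dt
\]
follows at once from (1). Splitting the right-hand integral as above and applying dominated convergence, both pieces vanish as $\Re\lambda \to \infty$, so $\|B\cR(\lambda)\| < 1$ for $\Re\lambda$ large enough.

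For (3), I would apply the Neumann series: $(I-B\cR(\lambda))^{-1} = \sum_{n \ge 0}(B\cR(\lambda))^n \in \cL(B_b(M),\sigma)$, so $S(\lambda) := \cR(\lambda)(I-B\cR(\lambda))^{-1}$ is a well-defined $\sigma$-continuous operator. To verify $S(\lambda) = (\lambda - \cL - B)^{-1}$ in the graph sense, I would check both inclusions. If $g = S(\lambda)h$, put $h' := (I-B\cR(\lambda))^{-1}h$, so that $g = \cR(\lambda)h'$ and $h' = h + B\cR(\lambda)h' = h + Bg$; the graph characterization of $\cR(\lambda)$ then gives $(g, \lambda g - h - Bg) \in \cL$, i.e.\ $(h,g) \in (\lambda - \cL - B)^{-1}$. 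Conversely, if $(g, \lambda g - h - Bg) \in \cL$ then $g = \cR(\lambda)(h+Bg)$, whence $Bg = B\cR(\lambda)h + B\cR(\lambda)Bg$, giving $Bg = (I-B\cR(\lambda))^{-1}B\cR(\lambda)h$; substituting back recovers $g = \cR(\lambda)(I-B\cR(\lambda))^{-1}h = S(\lambda)h$.

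The delicate point is the identification in part (1): since $B$ is only defined on $D(\cL)$ with no a priori norm-continuity on $B_b(M)$, commuting $B$ with the integral must rely on the $\sigma$-continuity of $B\cR(\lambda_0)$ from Hypothesis~\ref{hyp1}(1) and the pseudo-resolvent equation to transfer this control; everything in (2) and (3) is routine in comparison.
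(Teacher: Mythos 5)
Parts (2) and (3) of your plan are correct and essentially the paper's own arguments: (2) follows from the representation in (1) exactly as you say (the paper splits the integral at a small $\delta$ rather than invoking dominated convergence, an immaterial difference), and your two graph inclusions in (3) are the element-wise form of the paper's factorization $\lambda-\cL-B=(I-B\cR(\lambda))(\lambda-\cL)$ followed by inversion via the Neumann series. The gaps are in part (1), precisely at the step you yourself flag as delicate. First, existence of the $\M(M)$-integral does not follow from the theory you cite: the existence result of \cite[Section 6]{k10}, as quoted in the paper, applies to scalarly measurable, dominated functions with values in $C_b(M)$ or in $\M(M)$, whereas $B\cT(t)f$ lies only in $B_b(M)$ (in the intended application $B\cT(t)f(x)=\dual{F(x)}{D\cT(t)f(x)}$ with $F$ merely measurable, so it is genuinely discontinuous), and for $B_b(M)$-valued integrands scalar measurability plus domination is not known to suffice. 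Second, the identification on $D(\cL)$ would fail as described. To pass from $Bg_n=\sum_j e^{-\lambda t_j}B\cT(t_j)f\,\Delta_j$ to $\int_0^\infty e^{-\lambda t}B\cT(t)f\,dt$ you need Riemann integrability, i.e.\ some continuity of $t\mapsto B\cT(t)f$; Hypothesis \ref{hyp1} gives only scalar measurability and an $L^1_{\mathrm{loc}}$ bound, and Riemann sums of a merely Lebesgue-integrable function need not converge to its integral. (For $f\in D(\cL)$ the needed norm continuity on $(0,\infty)$ is in fact true, via $B\cT(t)f-B\cT(s)f=B\cT(\eps)\big[\cT(t-\eps)f-\cT(s-\eps)f\big]$ and boundedness of the extended operator $B\cT(\eps)$, but you must prove it.) Moreover, Remark \ref{rem1} requires a net $(g_n,h_n)\subset\cL$ with \emph{both} components $\sigma$-convergent, and you only control $g_n$: the natural second components are Riemann sums of $\cT(\cdot)u$ for some $u\in\cL f$, and such $u$ is merely bounded measurable, so these sums need not converge for general $f\in D(\cL)$. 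This could be repaired by shrinking your dense set to $\cR(\lambda_0)^2B_b(M)$ (still $\sigma$-dense), where $u$ can be chosen in $D(\cL)$.

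The paper avoids all of this with a different device, which also settles the existence question: it computes $B\cT(\eps)\cR(\lambda)f$ by pulling the $\sigma$-continuous extended operator $B\cT(\eps)$ through the $\M(M)$-integral defining $\cR(\lambda)f$, obtaining $\int_\eps^\infty e^{\lambda\eps}e^{-\lambda s}B\cT(s)f\,ds$, and then lets $\eps\downarrow 0$: the right-hand side converges in norm by the dominated convergence theorem for these integrals \cite[Lemma 4.7]{k10}, which simultaneously yields $\M(M)$-integrability of the limiting integrand, while on the left $\cT(\eps)\cR(\lambda)f\weak\cR(\lambda)f$ (stochastic continuity applied to $f_0=(\lambda-\cL|_{C_b(M)})\cR(\lambda)f$), so Hypothesis \ref{hyp1}(1) gives $B\cT(\eps)\cR(\lambda)f\weak B\cR(\lambda)f$. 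This works for every $f\in B_b(M)$ at once, with no Riemann sums and no density argument; if you want to keep your architecture, you should at least borrow this approximation to establish both the existence of the integral and the identification.
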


\begin{proof}
(1) Let $\eps >0$ and $\Re\lambda > \max\{\omega, 0\}$. Then $B\cT (\eps )\cR (\lambda ) \in \cL (B_b(M), \sigma )$ as 
composition of the kernel operators $B\cT (\eps)$ and $\cR(\lambda )$.
Furthermore, since $B \cT (\eps ) \in \cL (B_b(M), \sigma )$, 
\[ 
\begin{aligned}
B\cT (\eps )\cR(\lambda )f = & \,B \cT (\eps )\int_0^\infty e^{-\lambda t}\cT (t)f \, dt 
 = \int_0^\infty e^{-\lambda t}B\cT (t+\eps)f \, ds\\
 = & \,
 \int_\eps^\infty e^{\lambda \eps} e^{-\lambda s} B\cT (s )f\, ds\, .
\end{aligned}
\]
We note that $\one_{(\eps, \infty)}(s)e^{\lambda \eps} e^{-\lambda s} B\cT(s)f$ converges in norm 
to $e^{-\lambda s}B\cT(s)f$ as $\eps \to 0$, for all $s>0$. Furthermore, 
with $c_1 := \max\{e^{\Re\lambda}, 1\}$, we have
\[ 
 \one_{(\eps, \infty)}\|e^{\lambda \eps} e^{-\lambda s} B\cT (s)f\| 
\leq c_1 e^{-\Re\lambda s} \cdot \big[ \one_{(0,1)}(s)\varphi (s) + \varphi (1)Ce^{\omega (s-1)}\big] 
\cdot \|f\|_\infty
\]
for $\eps \in (0,1)$, 
and the function on the right-hand side is integrable on $(0,\infty)$. It follows from dominated convergence, cf.\ 
\cite[Lemma 4.7]{k10}, that $s \mapsto e^{-\lambda s}B\cT (s)f$ is $\M (M)$-integrable on $(0,\infty )$ and 
\[ \lim_{\eps \to 0} \int_\eps^\infty e^{\lambda \eps}e^{-\lambda s}B\cT (s)f\, ds 
 = \int_0^\infty e^{-\lambda s}B\cT (s)f\, ds 
\]
in norm. Noting that $\cL|_{C_b(M)}$ is single-valued, we put $f_0 = (\lambda - \cL|_{C_b(M)})\cR (\lambda )f 
\in C_b(M)$. Then
\[ \cT (\eps )\cR(\lambda )f = \cT (\eps )\cR(\lambda )f_0 = \cR (\lambda )\cT (\eps )f_0 
 \weak \cR(\lambda )f_0 = \cR (\lambda )f\, ,
\]
as $\eps \to 0$. Using (1) of Hypothesis \ref{hyp1}, (1) follows.\medskip 

(2) Let $\delta >0$ and $\Re\lambda > \max\{\omega, 0\}$. Then, for $t>0$, we have
\[ \|e^{-\lambda t}B \cT(t)\| \leq \one_{(0,\delta )}(t)\varphi (t) + \one_{[\delta, \infty )} 
 C\varphi (\delta )e^{-\Re\lambda t } e^{\omega (t-\delta )}\, .
\]
By the representation from (1), we find
\[ \|B\cR (\lambda )\| \leq \int_0^\delta \varphi (t)\, dt + Ce^{-\Re\lambda \delta} \varphi (\delta)\int_0^\infty 
 e^{(\omega - \Re\lambda )t}\, dt\, .
\]
Given $\eps>0$, we see that $\|B\cR(\lambda )\| \leq \eps$ if we first choose $\delta$ small enough such that
$\int_0^\delta \varphi (t)\, dt \leq \eps/2$ and then $\Re\lambda$ large enough, such that the second term is also 
less than $\eps/2$.\medskip 

(3) For $\Re\lambda > \omega$ we have
\[ \lambda -\cL - B = (I - B\cR (\lambda ) )(\lambda - \cL)\, .\]
Indeed, since $I - B\cR (\lambda )$ is a bounded operator, both sides have the same domain. Now let $f \in D (\cL)$. If 
$g \in (\lambda - \cL - B)f$, then there exists $h \in (\lambda - \cL )f$ with $g = h - Bf$. We note that
$h \in (\lambda - \cL )f$ iff $\cR(\lambda )h = f$. It follows that
\[ g = h - Bf = (I - B\cR(\lambda ))h \in (I - B\cR (\lambda ))(\lambda - \cL )f\, .\]
Conversely, assume that $g \in (I - B\cR (\lambda ))(\lambda - \cL)f$. Then there exists $h \in (\lambda - \cL )f$, 
i.e.\ $\cR (\lambda )h = f$, with $g = (I - B\cR (\lambda )h = h - Bf$. Thus $g \in (\lambda - \cL - B)f$.

If we choose $\Re\lambda$ large enough such that $\|B\cR (\lambda )\|<1$, then $I - B\cR (\lambda )$ is invertible
in $\cL (B_b(M), \sigma )$ and \eqref{eq.resolvent} follows.
\end{proof}

\begin{proof}[Proof of Theorem \ref{t.pert}]

First note that for every $t_0 >0$ we have $\cT \in \fX ([0,t_0])$. Indeed, conditions (1) and (2) 
in Definition \ref{def.1} are obvious and (3) holds since $t \mapsto \cT(t)'\mu$ is $\|\cdot\|_{\mathrm{TV}}$-continuous
for $\mu \in D(\cL')$, cf.\ \cite[Remark 2.5]{k09}. Now pick $t_0>0$ such that $1 \in \rho (\fV_{t_0})$.
For $0\leq t = k t_0 + t_1$, where $k \in \CN$ and $t_1 \in [0, t_0)$, we define
\[ \scrP(t) := \big( \big[ R(1, \fV_{t_0}) \cT|_{[0,t_0]}\big](t_0)\big)^k\big[ R(1,\fV_{t_0})\cT|_{[0,t_0]}\big](t_1)\, .\]
By the definition of $\fX ([0,t_0])$ we have $\scrP := (\scrP(t))_{t\geq 0} \subset \cL(B_b(E), \sigma )$ and 
$\scrP(t)$ has the strong Feller property for all $t>0$. We also note that \eqref{eq.inteq} is 
satisfied for $t\leq t_0$. By Corollary \ref{c.res} we have $[\fV_{t_0} \scrP](t) \to 0$ in the operator 
norm as $t \downarrow 0$. Hence, by equation \eqref{eq.inteq}, $\scrP(t)f \weak f$ as $t\downarrow 0$.

The proof that $\scrP$ is a semigroup and satisfies \eqref{eq.inteq} for all $t>0$  is basically algebraic and 
follows the lines of the proof in \cite{farkasSF}. 
\medskip 

We now identify the full generator of $\scrP$. By Lemma \ref{l.BR}, we may choose $\lambda \in \CR$ so large that 
$\|B\cR(\lambda )\| <1$. Let us denote the generator of $\scrP$ by $\mathscr{S}$, i.e.\ $\mathscr{S}$ is 
the unique multivalued operator such that $(\lambda - \mathscr{S})^{-1} = \mathscr{Q}(\lambda )$ for 
$\Re\lambda > \omega$, where $(\mathscr{Q}(\lambda ))_{\Re\lambda > \omega}$ is the Laplace transform of 
$\scrP$.

By \eqref{eq.inteq} and Fubini's theorem, 
\begin{eqnarray*}
\mathscr{Q}(\lambda ) f & = &  \int_0^\infty e^{-\lambda t}\Big( \cT(t)f + \int_0^t\scrP(t-s)B\cT(s)\, ds\Big)\, dt\\
& = & \cR (\lambda ) f + \int_0^\infty e^{-\lambda s} \int_0^\infty \scrP(r)B\cT(s)f \, dr ds\\
& = & \cR(\lambda)f + \mathscr{Q}(\lambda ) B\cR(\lambda )f\, .
\end{eqnarray*}
This implies that 
\[ \mathscr{Q}(\lambda ) = \cR(\lambda )(I- B\cR(\lambda ))^{-1}\, .\]
By Lemma \ref{l.BR} (3),  $\mathscr{S} = \cL + B$.
\end{proof}

\subsection{Continuity properties of the perturbed semigroup}
It follows immediately from equation \eqref{eq.inteq} and Corollary \ref{c.res} that the perturbed 
semigroup $\scrP$ enjoys the same continuity properties at $0$ as $\cT$. In many cases, the semigroup $\cT$ is known to have 
better continuity properties than the stochastic continuity assumed in Definition \ref{d.sf}, namely, for every 
$f \in C_b(M)$ we have $\cT (t )f \to f$, uniformly on the compact subsets of $M$ as $t\downarrow 0$, i.e.\ $\cT (t)f
\to f$ with respect to $\tau_{\mathrm{co}}$, the compact-open topology. In fact, in virtually all known cases, 
for $f \in C_b(M)$ the whole orbit $t \mapsto \cT (t)f$ is $\tau_{\mathrm{co}}$-continuous.

It is thus natural to ask whether in this situation the same is true for the perturbed semigroup $\scrP$.\medskip 

Before tackling this question, let us first reformulate the $\tau_{\mathrm{co}}$-continuity.
The \emph{strict topology} $\beta_0(M)$ (sometimes also called the \emph{mixed topology}) is defined as follows.

Let $\F_0(M)$ be the space of all scalar-valued functions $\psi$ which vanish 
at infinity, i.e.\ given $\eps>0$, there exists a compact set $K$ such that $|\psi (x)| \leq \eps$ for all 
$x \in M\setminus K$. For $\psi \in \F_0(M)$, the seminorm $p_\psi$ is defined by 
$p_\psi (f) = \|\psi f\|_\infty$. The strict topology $\beta_0(M)$ is the locally convex topology on $C_b(M)$ 
generated by the seminorms $(p_\psi )_{\psi \in \cF_0}$.

It is known that on $\|\cdot\|_\infty$-bounded sets, $\beta_0(M)$ coincides with the compact-open topology $\tau_\mathrm{co}$.
Hence, in view of exponential boundedness, $\cT|_{C_b(M)}$ has $\beta_0(M)$-continuous orbits if and only if 
it has $\tau_\mathrm{co}$-continuous orbits. Moreover, $\beta_0(M)$ is the strong Mackey topology of the 
dual pair $(C_b(M), \M (M))$, i.e.\ $\beta_0$ is the topology of uniform convergence on the $\sigma (\M (M), C_b(M))$-compact
subsets of $\M (M)$. In view of Prokhorov's Theorem, $\beta_0(M)$ is the topology of uniform convergence on 
the tight subsets of $\M (M)$. For the proof of these facts we refer to \cite{sentilles}, see also \cite{k09} for 
further references. 

\begin{prop}\label{p.cbcont}
Let $\cT$ be a strong Feller semigroup such that for every $f \in C_b(M)$ we have $\cT (t)f \to f$ with respect  
$\beta_0(M)$ as $t\downarrow 0$. Then $t \mapsto \cT (t)f$ is $\beta_0(M)$-continuous for every $f \in C_b(M)$.
\end{prop}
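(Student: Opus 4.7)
My plan is to split the desired $\beta_0(M)$-continuity of the orbit $t \mapsto \cT(t)f$ on $[0,\infty)$ into right- and left-continuity at each point $t_0\geq 0$, and to reduce each, via the semigroup law, to the assumed continuity at $0$. For right-continuity at $t_0\geq 0$: from $\cT(t_0+h)f = \cT(h)\cT(t_0)f$ and the observation that $\cT(t_0)f\in C_b(M)$ (directly for $t_0=0$, and by the strong Feller property for $t_0>0$), the hypothesis applied to $\cT(t_0)f$ gives $\cT(t_0+h)f\to \cT(t_0)f$ in $\beta_0$ as $h\downarrow 0$.

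For left-continuity at $t_0>0$, I would start from
\[
\cT(t_0)f - \cT(t_0 - h)f \;=\; \cT(t_0 - h)\bigl(\cT(h)f - f\bigr),
\]
set $g_h := \cT(h)f - f$, and observe that $g_h\to 0$ in $\beta_0$ by hypothesis, while exponential boundedness gives a uniform bound on $(\|g_h\|_\infty)_{h\in[0,1]}$. Fix $h_0\in(0,t_0)$. Then $\{\cT(t_0-h)g_h : h\in[0,h_0]\}$ is norm-bounded, and since $\beta_0(M)$ coincides with the compact-open topology on $\|\cdot\|_\infty$-bounded subsets of $C_b(M)$, it suffices to prove that $\cT(t_0-h)g_h\to 0$ uniformly on every compact $K\subset M$.

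Since $\cT(t_0-h)g_h(x) = \langle g_h, \cT(t_0-h)'\delta_x\rangle$, and since $\beta_0$ is the Mackey topology of the dual pair $(C_b(M), \M(M))$ --- so by Prokhorov $g_h\to 0$ in $\beta_0$ is equivalent to $\langle g_h, \cdot\rangle \to 0$ uniformly on tight subsets of $\M(M)$ --- the problem reduces to the tightness in $\M(M)$ of
\[
\mathscr{K}_{K, h_0}\;:=\;\bigl\{\cT(t_0 - h)'\delta_x \,:\, x \in K,\; h \in [0, h_0]\bigr\}.
\]

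Establishing this tightness is the crux and the step I expect to be the main obstacle. My approach would be to factor $\cT(t_0-h) = \cT(t_0-h_0)\,\cT(h_0-h)$ for $h\in[0,h_0]$, whereby $\cT(t_0-h)'\delta_x = \cT(h_0-h)'\nu_x$ with $\nu_x := \cT(t_0-h_0)'\delta_x$. The strong Feller property of $\cT(t_0-h_0)$ makes $x\mapsto \nu_x$ weak$^*$-continuous into $\M(M)$, so the compact image $\{\nu_x : x\in K\}$ is weak$^*$-compact and hence tight by Prokhorov. The remaining task --- showing that $\cT(s)'$ preserves this tightness uniformly in $s\in[0,h_0]$ --- I would derive from the right-continuity of orbits already established, transferred to the dual side via the Mackey--Prokhorov description of $\beta_0$, combined with a uniformity argument based on the semigroup bound. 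Once the tightness is in hand, the $\beta_0$-null property of $g_h$ delivers the uniform smallness of $\langle g_h,\mu\rangle$ over $\mu\in\mathscr{K}_{K,h_0}$, completing the proof of left-continuity and hence of the proposition.
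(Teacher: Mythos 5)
Your right-continuity step is correct and elementary, but the left-continuity argument has a genuine gap at exactly the point you flag and do not prove: the tightness of $\mathscr{K}_{K,h_0}=\{\cT(t_0-h)'\delta_x : x\in K,\ h\in[0,h_0]\}$. This cannot be derived from the right-continuity of orbits ``transferred to the dual side'': one-sided continuity gives no compactness whatsoever (a right-continuous map on a compact interval need not even have bounded range --- consider $t\mapsto (1-t)^{-1}$ on $[0,1)$ extended by $0$ at $t=1$), and the uniform total-variation bound coming from exponential boundedness does not help either, since TV-bounded families of measures (e.g.\ Dirac masses escaping to infinity) need not be tight. The natural way to obtain the tightness of $\{\cT(s)'\nu : s\in[0,h_0],\ \nu\in\mathscr{N}\}$ for a weakly compact tight set $\mathscr{N}$ would be joint weak continuity of $(s,\nu)\mapsto \cT(s)'\nu$; but checking that continuity requires $\cT(s_n)f\to\cT(s)f$ in $\beta_0(M)$ for \emph{two-sided} convergent sequences $s_n\to s$, i.e.\ precisely the conclusion of the proposition. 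So as sketched, the route is circular, and what you call ``a uniformity argument based on the semigroup bound'' is exactly the hard part, not a routine step.

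The missing ingredient is a compactness input that your proposal never uses: for $t>0$ the operator $\cT(t)$ is the product of two strong Feller operators and hence \emph{ultra-Feller}, i.e.\ $x\mapsto k_t(x,\cdot)$ is continuous into $(\M(M),\|\cdot\|_{\mathrm{TV}})$, which is equivalent to $\cT(t)$ mapping norm-bounded sets into relatively $\beta_0(M)$-compact sets (see \cite[Chapter 1, \S 5]{revuz}). The paper's proof avoids the right/left split altogether: by \cite[Proposition 2.12]{k09}, $\beta_0(M)$-continuity of $t\mapsto\cT(t)f$ follows once the orbit $\{\cT(t)f : t\in[0,1]\}$ is relatively countably $\beta_0(M)$-compact, and this compactness is verified by splitting a sequence $(t_n)$ into the case of a subsequence tending to $0$ (where the hypothesis gives the accumulation point $f$) and the case $t_n\geq\eps$ (where $\cT(t_n)f=\cT(\eps)\cT(t_n-\eps)f$ lies in the image of a bounded set under the ultra-Feller operator $\cT(\eps)$). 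If you want to keep your dual formulation, the tightness of $\mathscr{K}_{K,h_0}$ is essentially the dual expression of this ultra-Feller compactness, and you would have to inject that property (or the cited compactness criterion) into your argument; without it, the crux step remains open.
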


\begin{proof}
To begin with, we note that for $t>0$ the operator $\cT (t)$ is the product of two strong Feller operators and hence 
is ultra-Feller, i.e.\ the map $x \mapsto k_t(x, \cdot )$ is $d$ to $\|\cdot\|_\mathrm{TV}$-continuous, where 
$k_t$ is the kernel associated with $\cT (t)$. This in turn is equivalent to saying that $\cT (t)$ maps 
norm-bounded sets into relatively $\beta_0(M)$-compact sets. 
For Markovian operators, the proof of these facts can be found in \cite[Chapter 1, \S 5]{revuz}. The proof generalizes 
easily to general kernel operators.

By \cite[Proposition 2.12]{k09}, to prove that $t \mapsto \cT (t)f$ is $\beta_0(M)$-continuous for every $f \in C_b(M)$,
it suffices to prove that for every $f \in C_b(M)$ the set $\{ \cT (t)f \, : \, t \in [0,1]\}$ is relatively countably
$\beta_0(M)$-compact. Let a sequence $(t_n) \subset [0,1]$ be given. If $(t_n)$ has a subsequence converging to 
0, then $\cT (t_n)f$ has the $\beta_0(M)$-accumulation point $f$, since $\cT (t)f \to f$ with 
respect to $\beta_0(M)$ by assumption.

Now suppose that $t_n \geq \eps >0$ for all $n \in \CN$. In this case, we can write 
$\cT (t_n)f = \cT (\eps) \cT (t_n - \eps)f$. Since $\{\cT (t_n-\eps )f : n \in \CN\}$ is bounded, it is mapped to 
a relatively $\beta_0(M)$-compact set by the ultra-Feller operator $\cT (\eps)$. Hence also in this case $\cT(t_n)f$
has a $\beta_0(M)$-accumulation point. 
\end{proof}

Together with Corollary \ref{c.res}, we obtain

\begin{cor}
Under the assumption and with the notation of Theorem \ref{t.pert}, if $\cT|_{C_b(M)}$ has $\beta_0(M)$-continuous
orbits, then so does $\scrP|_{C_b(M)}$. 
\end{cor}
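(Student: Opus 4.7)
The plan is to apply Proposition \ref{p.cbcont} to the perturbed semigroup $\scrP$ itself. Since Theorem \ref{t.pert} already establishes that $\scrP$ is a strong Feller semigroup in the sense of Definition \ref{d.sf}, the only remaining point to verify is that $\scrP(t)f \to f$ with respect to $\beta_0(M)$ as $t\downarrow 0$ for every $f \in C_b(M)$. Proposition \ref{p.cbcont} will then automatically upgrade this $\beta_0$-convergence at zero to $\beta_0$-continuity of the whole orbit $t \mapsto \scrP(t)f$.

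To obtain the $\beta_0$-convergence at the origin, I would read off the integral equation \eqref{eq.inteq} in the Volterra-operator notation of \eqref{eq.volterra}:
\[ \scrP(t)f - f = \big(\cT(t)f - f\big) + [\fV_t \scrP](t)f. \]
The first summand converges to $0$ in $\beta_0(M)$ directly by the hypothesis that $\cT|_{C_b(M)}$ has $\beta_0$-continuous orbits (in particular at $t = 0$). For the perturbation term, fix any $t_0 > 0$; the construction of $\scrP$ in the proof of Theorem \ref{t.pert} as $R(1,\fV_{t_0})\cT|_{[0,t_0]}$ shows that $\scrP|_{[0,t_0]}$ belongs to $\fX([0,t_0])$, so Lemma \ref{l.voltop} combined with Corollary \ref{c.res} gives
\[ \|[\fV_t\scrP](t)f\|_\infty \leq \Big(\int_0^t \varphi(r)\, dr\Big)\|\scrP\|_{\fX([0,t_0])}\|f\|_\infty \longrightarrow 0 \]
as $t\downarrow 0$, since $\varphi \in L^1_{\mathrm{loc}}([0,\infty))$. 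Because every defining seminorm $p_\psi$ of $\beta_0(M)$ satisfies $p_\psi(g) \leq \|\psi\|_\infty \|g\|_\infty$, supremum-norm convergence implies $\beta_0$-convergence, so the perturbation term also tends to $0$ in $\beta_0(M)$.

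No genuine obstacle is expected here: the whole content is that the Volterra contraction $\|\fV_t\| \to 0$ forces the perturbation term to inherit the regularity of $\cT$ at the origin, and Proposition \ref{p.cbcont} is tailor-made to bootstrap $\beta_0$-continuity at zero into $\beta_0$-continuity of the entire orbit. The only subtlety is the small bookkeeping check that $\scrP|_{[0,t_0]}$ really sits inside $\fX([0,t_0])$, but this is immediate from the construction of $\scrP$ within the proof of the perturbation theorem.
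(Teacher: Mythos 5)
Your proposal is correct and follows exactly the route the paper intends: the paper's (compressed) proof is precisely "integral equation \eqref{eq.inteq} plus Corollary \ref{c.res} give $\beta_0(M)$-convergence of $\scrP(t)f$ to $f$ at $t=0$, then Proposition \ref{p.cbcont} applied to the strong Feller semigroup $\scrP$ upgrades this to $\beta_0(M)$-continuity of all orbits." Your write-up simply makes explicit the norm estimate $\|[\fV_t\scrP](t)\| \leq \int_0^t \varphi(r)\,dr\,\|\scrP\|_{\fX}$ and the membership $\scrP|_{[0,t_0]} \in \fX([0,t_0])$ that the paper leaves implicit.
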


\subsection{Main application}
We now consider the situation of Theorem \ref{t.main}. Here $M$ is the real, separable Banach space $E$. We use 
Theorem \ref{t.pert} to perturb Ornstein-Uhlenbeck semigroups, i.e.\ the transition semigroup of 
equation $[A,0,G]$. For bounded $F$, we want that the perturbed semigroup is the transition semigroup of 
equation $[A,F,G]$. Hence, formally, the operator $B$ should be given by
\begin{equation}\label{eq.b}
 D(B) = C^1_b(E) \quad Bf (x) = \dual{F(x)}{D f(x)}\, ,
\end{equation}
where $D f$ is the G\^ataux derivative of $f$ and $C^1_b(E)$ denotes 
the space of all bounded, G\^ataux-differentiable functions with bounded G\^ataux derivative. 

We now give sufficient conditions on a strong Feller semigroup $\cT$, not necessarily the Ornstein-Uhlenbeck semigroup, 
such that $(\cT, B)$ satisfies Hypothesis \ref{hyp1}.

\begin{hyp}\label{hyp.sf}
Let $\cT$ be a strong Feller semigroup with full generator $\cL$ such that 
\begin{enumerate}
 \item $\cT(t)f \in C^1_b(E)$ for all $f \in B_b(E)$ and $t>0$;
 \item there exists $\varphi \in L^1_\mathrm{loc}([0,\infty))$ such that $\sup_{x \in E}\|D \cT(t)f(x)\|_{E^*} 
\leq \varphi (t) \|f\|_\infty$ for all $f \in B_b(E)$ and $t >0$;
 \item For all $x, y \in E$, the linear functional $f \mapsto \dual{y}{D \cT(t)f(x)}$ is given by 
a measure $\mu = \mu_{t,x,y}\in \M (E)$, i.e.\ $\dual{y}{D \cT(t)f(x)}=\dual{f}{\mu}$ for all $f \in B_b(E)$.
\end{enumerate}
\end{hyp}

\begin{prop}\label{p.verify}
Assume Hypothesis \ref{hyp.sf}. Then $\cT$ together with $B$, defined by \eqref{eq.b} satisfies 
Hypothesis \ref{hyp1}. 
\end{prop}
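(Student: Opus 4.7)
The goal is to verify each item of Hypothesis~\ref{hyp1} for the operator $B$ defined in~\eqref{eq.b}; throughout, the key tool is dominated convergence in various guises. Two quick observations set things up: first, since $\cT$ is exponentially bounded, the semigroup identity $D\cT(t)f = D\cT(1)\cT(t-1)f$ together with Hypothesis~\ref{hyp.sf}(2) gives $\|D\cT(t)f\|_\infty \le C\varphi(1)e^{\omega(t-1)}\|f\|_\infty$ for $t\ge 1$, so after enlarging $\varphi$ we may assume $\int_0^\infty e^{-\Re\lambda t}\varphi(t)\,dt<\infty$ whenever $\Re\lambda>\omega$. Second, Hypothesis~\ref{hyp.sf}(1) tells us that $\cT(t)B_b(E)\subset C^1_b(E)=D(B)$ for $t>0$, so $B\cT(t)$ is actually defined on all of $B_b(E)$ (no extension is needed), and the estimate
\[
\|B\cT(t)f\|_\infty \le \|F\|_\infty\,\varphi(t)\,\|f\|_\infty
\]
is immediate, giving condition (4) of Hypothesis~\ref{hyp1}.

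To obtain the standing inclusion $D(\cL)\subset D(B)$ together with condition (1), I differentiate $\cR(\lambda)$ under the integral sign. The difference quotient of $\cR(\lambda)f(x) = \int_0^\infty e^{-\lambda t}\cT(t)f(x)\,dt$ in direction $y$ has integrand bounded by $\|y\|\varphi(t)\|f\|_\infty$ (via the mean value theorem applied to the G\^ateaux differentiable $\cT(t)f$) and converges pointwise in $t$ to $e^{-\lambda t}\langle y,D\cT(t)f(x)\rangle$, so dominated convergence gives $\cR(\lambda)f\in C^1_b(E)=D(B)$ with $\langle y,D\cR(\lambda)f(x)\rangle = \int_0^\infty e^{-\lambda t}\langle y,D\cT(t)f(x)\rangle\,dt$. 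Since $D(\cL)=\mathrm{range}(\cR(\lambda))$, this already yields $D(\cL)\subset D(B)$. Taking $y=F(x)$ then gives $B\cR(\lambda)f(x) = \int_0^\infty e^{-\lambda t}B\cT(t)f(x)\,dt$. For (2), if $f_n\weak f$ in $B_b(E)$, Hypothesis~\ref{hyp.sf}(3) rewrites $B\cT(t)f_n(x) = \langle f_n,\mu_{t,x,F(x)}\rangle$, which converges to $B\cT(t)f(x)$ by bounded convergence in the finite measure $\mu_{t,x,F(x)}$; combined with the uniform bound from (4), this gives $B\cT(t)\in\cL(B_b(E),\sigma)$. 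A further dominated convergence in $t$, with integrable majorant $\|F\|_\infty\varphi(t)\sup_n\|f_n\|_\infty e^{-\Re\lambda t}$, promotes this to $B\cR(\lambda)f_n\weak B\cR(\lambda)f$, proving (1).

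The main obstacle is condition (3), the scalar $\M(E)$-measurability of $s\mapsto B\cT(s)f$, because the derivative $D\cT(s)f$ is controlled only pointwise in $(s,x)$ and the drift $F$ is merely Borel. The plan is to prove joint Borel measurability of $(s,x)\mapsto B\cT(s)f(x)$ on $(0,\infty)\times E$ and then invoke Fubini to conclude that $s\mapsto \int_E B\cT(s)f(x)\,d\mu(x) = \langle B\cT(s)f,\mu\rangle$ is Borel for each $\mu\in\M(E)$. For $s>0$ the strong Feller property makes $\cT(s)f$ continuous in $x$, while scalar $\M(E)$-measurability of the semigroup makes $s\mapsto \cT(s)f(x)=\langle\cT(s)f,\delta_x\rangle$ measurable; hence $(s,x)\mapsto\cT(s)f(x)$ is a Carath\'eodory function and therefore jointly Borel measurable on $(0,\infty)\times E$. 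Taking pointwise limits of difference quotients along $h=1/n$ lifts this to joint measurability of $(s,x)\mapsto\langle y,D\cT(s)f(x)\rangle$ for each fixed $y\in E$. Finally, approximating the Borel map $F:E\to E$ pointwise by simple $E$-valued functions $F_n=\sum_i y_{i,n}\one_{A_{i,n}}$, each $(s,x)\mapsto\langle F_n(x),D\cT(s)f(x)\rangle$ is jointly measurable by inspection, and the gradient estimate $\|D\cT(s)f(x)\|_{E^*}\le\varphi(s)\|f\|_\infty$ forces pointwise convergence of these to $B\cT(s)f(x)$. Joint measurability passes through to the limit, Fubini applies because the integrand is bounded by $\|F\|_\infty\varphi(s)\|f\|_\infty$ on compact $s$-intervals, and (3) follows.
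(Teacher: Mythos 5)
Your proposal is correct and runs on the same engines as the paper's own proof: the mean value theorem plus dominated convergence to differentiate under integrals, Hypothesis \ref{hyp.sf}(3) to obtain $\sigma$-continuity of $B\cT(t)$, the gradient bound for condition (4), and simple-function approximation of $F$ for measurability. Two steps, however, take genuinely different routes. For condition (1) and the inclusion $D(\cL)\subset D(B)$, the paper never integrates over all of $(0,\infty)$: it splits $\cR(\lambda)f = e^{-\lambda\eps}\cT(\eps)\cR(\lambda)f + \int_0^\eps e^{-\lambda t}\cT(t)f\,dt$, treats the first summand via the composition $B\cT(\eps)\cR(\lambda)$ of two kernel operators, and differentiates only the integral over $(0,\eps)$, so that only the local integrability of $\varphi$ near $0$ is used; as a by-product this yields the representation \eqref{eq.resrep}, which is reused later in Step 1 of the proof of Theorem \ref{t.limit}. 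You instead first upgrade $\varphi$ at infinity through the semigroup identity $B\cT(t)f = B\cT(t_0)\bigl[\cT(t-t_0)f\bigr]$ (the same trick the paper employs inside Lemma \ref{l.BR}; note you should choose $t_0$ with $\varphi(t_0)<\infty$, since $\varphi\in L^1_{\mathrm{loc}}$ only guarantees finiteness almost everywhere) and then differentiate under the full Laplace integral, arriving at \eqref{eq.BRint} directly; both routes are valid. For condition (3) your argument is actually more complete than the paper's: the paper only verifies that $s\mapsto B\cT(s)f(x)$ is measurable for each fixed $x$, i.e.\ against Dirac measures, and asserts that this suffices, whereas scalar $\M(E)$-measurability against arbitrary $\mu\in\M(E)$ really rests on the joint measurability of $(s,x)\mapsto B\cT(s)f(x)$, which you establish via the Carath\'eodory argument and then exploit through Fubini; this is precisely the reasoning needed to make the paper's ``it suffices'' rigorous. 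One small omission on your side: Hypothesis \ref{hyp1} requires $B$ to map $D(B)=C^1_b(E)$ into $B_b(E)$, so $Bg$ must be shown Borel for \emph{every} $g\in C^1_b(E)$, not only for $g$ in the range of $\cT(t)$ or $\cR(\lambda)$; the paper checks this first, and your simple-function approximation of $F$ combined with difference quotients of $g$ proves it verbatim, so this is cosmetic rather than a gap.
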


\begin{proof}
Let us first show that $Bf \in B_b(E)$ whenever $f \in C^1_b(E)$.

Since $\dual{y}{D f(x)} = \lim_{h\to 0}h^{-1}[f(x+hy) - f(x)]$, it follows that 
$x \mapsto D f(x)$ is scalarly $E$-measurable whenever $f \in C^1_b(E)$. Since $F$ is strongly measurable, 
there exists a sequence of simple functions $F_n = \sum_{k=1}^{K_n}\one_{A_{kn}}x_{kn}$ converging pointwise to $F$. 
Hence
\[ Bf(x) = \dual{F(x)}{D f(x)} = \lim_{n\to\infty}\dual{F_n(x)}{D f(x)} = 
 \lim_{n\to \infty} \sum_{k=1}^{K_n}\one_{A_{kn}}(x)\dual{x_{nk}}{D f(x)}
\]
for all $x \in E$, proving that $Bf$ is measurable as the pointwise limit of measurable functions.\smallskip 

Consequently, $B\cT (t)f$ is a well-defined element of $B_b(E)$ for all $t>0$ and $f \in B_b(E)$. By Hypothesis 
\ref{hyp.sf} (3), if $f_n \weak f$, then $\dual{y}{D\cT (t)f_n(x)} \to \dual{y}{D\cT f(x)}$ for all $x,y\in E$. 
Using this with $y = F(x)$, it follows 
that $B\cT (t)f_n\weak B\cT (t)f$. This proves that (2) in Hypothesis \ref{hyp1}
is satisfied.\smallskip 

To prove Hypothesis \ref{hyp1} (3), it suffices to show that
$s \mapsto B\cT(s)f (x)$ is measurable for all $x \in E$. But this follows immediately from the equation
\[ B\cT(s)f(x) = \lim_{h \to 0} \frac{\cT(s)f(x+hF(x)) - \cT(s)f(x)}{h}\] 
since the functions on the right-hand side are measurable in $s$.\smallskip 

Hypothesis \ref{hyp1} (4) follows directly from Hypothesis \ref{hyp.sf} (2) and the boundedness of $F$.\smallskip 

It remains to prove that $D(\cL ) \subset C^1_b(E)$ and that Hypothesis \ref{hyp1} (1) holds. To that end, 
fix $\eps >0$. Then 
\[
  \cR (\lambda )f = e^{-\lambda \eps} \cT (\eps ) \cR (\lambda) f + \int_0^\eps e^{-\lambda t}\cT (t)f\, dt \, .
\]
By the above, $e^{\lambda \eps} B\cT (\eps )\cR (\lambda)$ defines an operator in $\cL (B_b(E), \sigma )$.
Concerning the second term, we first note that
\[ \Big|\frac{\cT(t) f(x+ hy) - \cT f(x)}{h}\Big| = \Big|\frac{1}{h}\int_0^h \dual{y}{D \cT (t)f(x+ sy)}\, ds \Big|
 \leq \varphi (t)\|y\|\, .
\]
Hence, by dominated convergence, it follows that $\int_0^\eps e^{-\lambda t}\cT (t)f\, dt \in C^1_b(E)$ and 
\[ \Big\langle y, D\int_0^\eps e^{-\lambda t}\cT (t)f \, dt (x)\Big\rangle 
 = \int_0^\eps e^{-\lambda t}\dual{y}{D\cT (t)f(x)}\, dt\, .
\]
Consequently, $B\int_0^\eps e^{-\lambda t}\cT (t)f\, dt = \int_0^\eps e^{-\lambda t}B\cT(t)f\, dt$, and from this
representation one infers that also $B\int_0^\eps e^{-\lambda t}\cT (t)f\, dt$ defines an operator in 
$\cL (B_b(E), \sigma )$. The proof is now complete. 
\end{proof}

For later use we note that the proof of Proposition \ref{p.verify} yields for $\eps >0$
\begin{equation}\label{eq.resrep}
  B\cR (\lambda )f = e^{-\lambda \eps} B\cT (\eps ) \cR (\lambda) f + \int_0^\eps e^{-\lambda t}B\cT (t)f\, dt \, .
\end{equation}

Let us now turn to the Ornstein-Uhlenbeck semigroup.
It is well-known that if equation $[A, 0, G]$ is well-posed on $E$, then for every $t>0$ the operator
$Q_t \in \cL (E^*, E)$, defined by 
\[ Q_tx^* := \int_0^t S(s)GG^*S^*(s)x^*\, ds \quad \forall\, x^* \in E^*\]
is the covariance operator of a centered Gaussian measure $\mu_t$ on $(E, \B (E))$, 
see \cite{vNW05}. Furthermore, 
the transition semigroup $\cT_{\OU}$ associated with equation $[A, 0, G]$ is given by
\[ \cT_{\OU}(t)f(x) \int_E f(S(t)x +z)\, d\mu_t(z) = \int_E f(z) d\mathcal{N}_{(S(t)x, Q_t)}(z)\quad f \in B_b(E)\,,\]
where $\mathcal{N}_{(m,Q)}$ denotes the Gaussian measure with mean $m$ and covariance $Q$.

Let us now recall some facts about the reproducing kernel Hilbert space $H_{Q_t}$ associated to $Q_t$.
$H_{Q_t}$ is defined as the completion of the range of $Q_t$ under the inner product $\ip{}{}_{Q_t}$, defined by
\[ \ip{Q_tx^*}{Q_ty^*}_{Q_t} := \dual{Q_tx^*}{y^*}\quad x^*, y^* \in E^*\, .\]
It is well-known that the identity on the range of $Q_t$ extends to a continuous injection from 
$H_{Q_t}$ to $E$, hence $H_{Q_t}$ may be viewed as a subspace of $E$. We also recall that the map 
$\phi^{Q_t} : Q_tx^* \mapsto \dual{\cdot}{x^*}$ extends to an isometry from $H_{Q_t}$ to $L^2(\mu_t)$. 
This extension is called the \emph{Paley-Wiener map}. With slight abuse of notation, we denote the extension also 
by $\phi^{Q_t}$. We will write $\phi_h^{Q_t}$ instead of $\phi^{Q_t}(h)$ for $h \in H_{Q_t}$.

It was seen in \cite[Corollary 2.4]{vN97}, see also \cite[Appendix B]{dpz_stoch} for the Hilbert space case, 
that $\cT_\OU$ is a strong Feller semigroup if and only if 
\[ S(t)E \subset H_{Q_t} \quad \forall\, t>0\, .\]\
This condition also has a control theoretic interpretation, cf.\ \cite{vN05}.
We now collect some properties of the Ornstein-Uhlenbeck semigroup that will be used in what follows.

\begin{prop}\label{p.ou}
Under the assumptions of Theorem \ref{t.main}, 
\begin{enumerate}
 \item $t \mapsto \|S(t)\|_{\cL (E, H_{Q_t})}$ is measurable.
 \item $S(t)$ is a compact operator from $E$ to $E$ for all $t>0$.
 \item $\cT_\OU$ satisfies Hypothesis \ref{hyp.sf};
 \item $\cT_\OU$ is irreducible;
 \item $\cT_\OU|_{C_b(E)}$ has $\beta_0(E)$-continuous orbits;
 \end{enumerate}
\end{prop}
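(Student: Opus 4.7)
The plan is to verify each item using the explicit representation $\cT_\OU(t)f(x)=\int_E f(S(t)x+z)\,d\mu_t(z)$ together with the Cameron-Martin theorem and the classical fact that the Cameron-Martin embedding $H_{Q_t}\hookrightarrow E$ is compact. For (1), I exploit separability of $E$: fixing a countable dense $\{x_n\}$ in the unit ball of $E$, $\|S(t)\|_{\cL(E,H_{Q_t})}=\sup_n\|S(t)x_n\|_{H_{Q_t}}$, so it suffices to show measurability of $t\mapsto \|S(t)x\|_{H_{Q_t}}$ for each $x$. This I obtain from the variational formula $\|S(t)x\|_{H_{Q_t}}^2=\sup\{2\langle S(t)x,x^*\rangle-\langle Q_tx^*,x^*\rangle:x^*\in E^*\}$, which reduces to a countable supremum via weak-$*$ separability of bounded subsets of $E^*$, using that both $\langle S(t)x,x^*\rangle$ and $\langle Q_tx^*,x^*\rangle$ are continuous in $t\in(0,\infty)$ by the continuity assumptions on $S$. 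For (2), I factor $S(t):E\to H_{Q_t}\hookrightarrow E$; the first map is bounded by the closed graph theorem and the second is compact, since the Cameron-Martin space of a Radon Gaussian measure on a separable Banach space has a relatively compact unit ball in $E$.

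For (3), I differentiate under the integral via Cameron-Martin: since $S(t)y\in H_{Q_t}$ for $y\in E$, the translation formula for $\mu_t$ yields
\[
\cT_\OU(t)f(x+y)=\int_E f(S(t)x+z)\,\exp\!\bigl(\phi^{Q_t}_{S(t)y}(z)-\tfrac12\|S(t)y\|_{H_{Q_t}}^2\bigr)\,d\mu_t(z),
\]
and differentiating at $y=0$ gives $\langle y,D\cT_\OU(t)f(x)\rangle=\int_E f(S(t)x+z)\phi^{Q_t}_{S(t)y}(z)\,d\mu_t(z)$ for every $f\in B_b(E)$, establishing G\^ateaux differentiability and thus condition (1) of Hypothesis \ref{hyp.sf}. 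Cauchy-Schwarz combined with the Paley-Wiener isometry $\|\phi^{Q_t}_{S(t)y}\|_{L^2(\mu_t)}=\|S(t)y\|_{H_{Q_t}}$ gives $|\langle y,D\cT_\OU(t)f(x)\rangle|\leq \|f\|_\infty\|S(t)\|_{\cL(E,H_{Q_t})}\|y\|$, so condition (2) holds with $\varphi(t)=\|S(t)\|_{\cL(E,H_{Q_t})}$, measurable by (1) above and locally integrable by \eqref{eq.hyp1}; condition (3) is immediate, since $\mu_{t,x,y}$ is the pushforward of $\phi^{Q_t}_{S(t)y}\,d\mu_t$ under $z\mapsto S(t)x+z$.

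The step I expect to be the main obstacle is (4). Using $\operatorname{supp}\mu_t=\overline{H_{Q_t}}$ and $S(t)x\in H_{Q_t}$, irreducibility $\cT_\OU(t)\one_U(x)=\mu_t(U-S(t)x)>0$ reduces to density of $H_{Q_t}$ in $E$. Arguing by contradiction, suppose $x^*\in E^*\setminus\{0\}$ vanishes on $H_{Q_t}$. The monotonicity $H_{Q_s}\subset H_{Q_t}$ for $s\leq t$ (coming from $Q_t-Q_s=S(s)Q_{t-s}S^*(s)\geq 0$) combined with $S(s)E\subset H_{Q_s}$ give $x^*(S(s)y)=0$ for all $y\in E$ and $s\in(0,t]$; density of $E$ in $\tilde E$ extends this to $S^*(s)x^*=0$ in $\tilde E^*$, and the semigroup identity $S^*(s)=S^*(s-t)S^*(t)$ propagates the vanishing to all $s>0$. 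Finally, $E$-valued path continuity at $0$ of the mild solution of $[A,0,G]$ forces $S(s)y\to y$ in $E$ for every $y\in E$, so letting $s\downarrow 0$ in $x^*(S(s)y)=0$ yields $x^*(y)=0$ on $E$, a contradiction.

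For (5), the stochastic representation $\cT_\OU(t)f(x)=\expect f(S(t)x+Y_t)$ (with $Y_t$ the stochastic convolution) combined with well-posedness of $[A,0,G]$ on $E$ provides a continuous $E$-valued modification with $Y_0=0$; hence $Y_t\to 0$ in $E$ almost surely and $\{\mathrm{law}(Y_t)\}_{t\in(0,1]}$ is tight in $E$. The same path continuity gives $S(t)x\to x$ in $E$ pointwise, which the uniform boundedness principle upgrades to uniform convergence on $E$-compact sets. Combining these two facts with the uniform continuity of $f\in C_b(E)$ on compact subsets of $E$ then yields $\cT_\OU(t)f\to f$ uniformly on compact subsets of $E$, i.e.\ in $\beta_0(E)$; Proposition \ref{p.cbcont} then upgrades this to $\beta_0(E)$-continuity of every orbit.
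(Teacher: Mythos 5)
Your proposal is essentially correct, but it follows a genuinely different and much more self-contained route than the paper, which settles most items by citation. For (1) the paper avoids any variational formula: by a result of van Neerven, $S(s)$ maps $H_{Q_t}$ into $H_{Q_{t+s}}$ with $\|S(s)\|_{\cL(H_{Q_t},H_{Q_{t+s}})}\leq 1$, whence $t\mapsto\|S(t)\|_{\cL(E,H_{Q_t})}$ is \emph{decreasing} and therefore measurable --- a one-line argument. Items (2), (3) and (5) are quoted from \cite{vN97} and \cite{vNG03}, whereas you rederive them directly: compactness of the Cameron--Martin embedding for (2), the Cameron--Martin translation formula and differentiation under the integral for the gradient representation in (3) (your formula agrees with the one the paper states), and for (5) a probabilistic proof of $\beta_0(E)$-convergence at $0$ fed into Proposition \ref{p.cbcont}, exactly the role that proposition plays in the paper's architecture. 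In (4) both you and the paper reduce irreducibility to density of $H_{Q_t}$ in $E$ via the support theorem; your Hahn--Banach detour through $S^*(s)x^*=0$ is unnecessary, but you are actually \emph{more} careful than the paper on one point: the paper says ``since $\bS$ is strongly continuous, $H_{Q_t}$ is norm-dense in $E$,'' while strong continuity is only assumed on $\tilde{E}$; your observation that $E$-valued path continuity of the solution of $[A,0,G]$ forces $S(s)y\to y$ in the norm of $E$ is the correct way to close this.

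The one step of yours that needs repair is the countable reduction in (1). A supremum over a weak-$*$ dense countable set recovers the full supremum only if the functional is (sequentially) weak-$*$ continuous there; but $x^*\mapsto\langle Q_tx^*,x^*\rangle$ is a priori only weak-$*$ lower semicontinuous, so $x^*\mapsto 2\langle S(t)x,x^*\rangle-\langle Q_tx^*,x^*\rangle$ is upper semicontinuous --- the wrong direction, and ``weak-$*$ separability'' alone does not suffice. The reduction can be saved: on a ball $nB_{E^*}$, which is weak-$*$ metrizable, one has $\langle Q_tx^*,x^*\rangle=\int_E\langle z,x^*\rangle^2\,d\mu_t(z)$, and dominated convergence (with dominating function $n^2\|z\|^2$, integrable by Fernique's theorem) shows this is weak-$*$ sequentially continuous on bounded sets; taking a countable weak-$*$ dense subset of each $nB_{E^*}$ and exhausting $E^*=\bigcup_n nB_{E^*}$ then yields a single $t$-independent countable family. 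Either supply this argument or simply adopt the paper's monotonicity trick, which bypasses the issue entirely.
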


\begin{proof}
(1) By \cite[Theorem 1.4]{vN97}, for $t,s >0$ the operator 
$S(s)$ maps $H_{Q_{t}}$ to $H_{Q_{t+s}}$ and $\|S(s)\|_{\cL (H_{Q_t}, H_{Q_{t+s}})} \leq 1$.
Consequently, 
\[ \|S(t+s)\|_{\cL (E, H_{Q_{t+s}})} \leq \|S(t)\|_{\cL (E, H_{Q_t})} \|S(s)\|_{\cL (H_{Q_t}, H_{Q_{t+s}})}
\leq \|S(t)\|_{\cL (E, H_{Q_t})}\, . \] 
It follows that $t \mapsto \|S(t)\|_{\cL (E, H_{Q_t})}$ is decreasing and hence 
measurable.\medskip 

(2) follows from \cite[Proposition 5.7]{vN97}.\medskip 

(3) It follows from \cite[Theorem 2.3]{vN97} that if $\cT_\OU$ is a strong Feller semigroup, then 
$\cT(t)f \in C^1_b(E)$ for all $f \in B_b(E)$, i.e.\ 
(1) in Hypothesis \ref{hyp.sf} is satisfied. Furthermore, for $f \in B_b(E)$ and  $x, y \in E$ we have
\[ \dual{y}{D\cT_\OU(t)f(x)} = \int_E f(S(t)x + z) \phi_{S(t)y}^{Q_t}(z)\, d\mu_t(z).\]
This yields that (3) in Hypothesis \ref{hyp.sf} is satisfied. From the above
equation we also obtain that
\[ |\dual{y}{D\cT_\OU(t)f(x)}| \leq \|f\|_\infty \|\phi_{S(t)y}^{Q_t}\|_{L^2(\mu_t)} = \|f\|_\infty \|S(t)y\|_{H_{Q_t}}\, .
\]
Hence, putting $\varphi (t) := \|S(t)\|_{\cL (E, H_{Q_t})}$ which belongs 
to $L^1_\mathrm{loc} ([0,\infty ))$ by assumption, 
 $\sup_{x\in E}\|D\cT_\OU (t)f(x)\| \leq \varphi (t) \|f\|_\infty$, proving (2) in Hypothesis \ref{hyp.sf}.\medskip

(4) Since $H_{Q_s} \subset H_{Q_t}$ for $0<s\leq t$, see \cite[Proposition 1.3]{vN97}, it follows that 
$\{ S(s)x\, :\, x \in E\, ,\, 0<s\leq t\} \subset H_{Q_t}$. Since $\bS$ is strongly continuous, $H_{Q_t}$ is 
norm-dense in $E$ for all $t>0$. It is well-known, see \cite[Theorem 3.6.1]{bogachev}, that the support of a Gaussian 
measure on $E$ is the closure of its reproducing kernel Hilbert space in $E$, shifted by the mean of the measure.
Hence, the support of $\mathcal{N}_{(S(t)x, Q_t)}$ is $S(t)x + \overline{H_{Q_t}} = E$. Consequently, every 
open set $U \subset E$ has positive measure with respect to $\mathcal{N}_{(S(t)x, Q_t)}$ 
and thus, for every $x \in E$ and $t>0$ we have $\cT_\OU (t)\one_U(x) = \mathcal{N}({S(t)x, Q_t)}(U) > 0$.\medskip 

(5) is \cite[Theorem 6.2]{vNG03}.
\end{proof}

In the rest of this article, $\cL_\OU$ denotes the full generator of $\cT_\OU$. By what was done so far, 
$\cL_\OU + B$ generates a strong Feller semigroup $\scrP$ and $\scrP|_{C_b(E)}$ has $\beta_0(E)$-continuous orbits.
The semigroup $\scrP$ is our candidate for the transition semigroup associated to equation $[A,F,G]$. 

However, up to now, it is not even clear whether $\scrP$ is Markovian, which of course is necessary to be a 
transition semigroup. 

\section{Well-posedness}\label{sect.well}

In order to finish the proof of Theorem \ref{t.main}, we have to link the semigroup $\scrP$, generated by $\cL_\OU +B$, 
to the solutions of equation $[A,F,G]$. This will be done by considering the associated (local) 
martingale problems.

\subsection{Martingale problems}
If $(M, d)$ is a complete separable metric space, then also $C([0, \infty ); M)$ is a complete separable 
metric space for the metric 
\[ \rho (\bx , \by ) := \sum_{k=1}^\infty 2^{-k} \sup_{t \in [0,k]} \big[1\wedge d (\bx (t), \by (t))\big] \]
The Borel $\sigma$-algebra of $(C([0,\infty ); M), \rho )$ will be denoted by 
$\cB$. It is well-known, see \cite[Lemma 16.1]{kallenberg}, that $\cB = \sigma (\bx (s) : s\geq 0 )$. Here, in 
slight abuse of notation, we have identified $\bx (s)$ with the $M$-valued random variable $\bx \mapsto \bx (s)$. We 
shall do so in what follows without further notice.
The filtration generated by these coordinate mappings is denoted by $\mathbb{B} := (\cB_t)_{t\geq 0}$, i.e.\ 
$\cB_t := \sigma (\bx (s)\, : 0\leq s \leq t )$.\medskip

We now recall the following definition, cf.\ \cite[Section 4.3]{ek}.

\begin{defn}\label{d.mart}
Let $\cL \subset B_b(M)\times B_b(M)$.
A measure $\PP \in \cP ( C([0,\infty ); M))$ is said to 
\emph{solve the martingale problem for $\cL$}, if for all $(f,g) \in \cL$, the process $\bM^{f,g}$, defined
by
\[ \big[\bM^{f,g}(\bx)\big](t) := f(\bx (t)) - \int_0^t g (\bx (s))\, ds \]
is a $\mathbb{B}$-martingale under $\PP$. Given a measure $\mu \in \cP (M)$, we say that 
$\PP$ solves the martingale problem for $(\cL, \mu )$ if $\PP$ solves the martingale problem for 
$\cL$ and $\PP (\bx (0) \in \cdot ) = \mu (\cdot )$.

We say that \emph{uniqueness in law} holds for the martingale problem for $\cL$, if for all $\mu \in \cP (M)$ whenever
$\PP_1$ and $\PP_2$ are solutions to the martingale problem for $(\cL, \mu )$, then
$\PP_1 = \PP_2$.

We say that the martingale problem for $\cL$ is \emph{well-posed} if for every $x \in M$ there exists 
a unique solution $\PP_x \in \cP (C([0,\infty ); M)$ of the martingale problem for $(\cL, \delta_x)$. 
\end{defn}

We will also have occasion to consider sets $\hat{\cL}$ which do not necessarily consist of bounded 
functions. Given a subset $\hat{\cL} \subset C(M)\times B(M)$, we say $\hat{\cL}$ is \emph{admissible} if 
for all $(f, g) \in \hat{\cL}$ the function $g$ is bounded on compact subsets of $M$. Note that if 
$\hat{\cL}$ is admissible, then $\bM^{f,g}$ is well-defined for all $(f,g) \in \hat{\cL}$.

A measure $\PP$ 
\emph{solves the local martingale problem for $\hat{\cL}$} if $\bM^{f,g}$ is a local $\mathbb{B}$-martingale under 
$\PP$ for all $(f,g) \in \hat{\cL}$. 
The terms `uniqueness in law' and `well-posedness' will also be used for local martingale problems with 
the obvious meaning.
\medskip 

The basic link between strong Feller semigroups and martingale problems is the following result.

\begin{thm}\label{t.unique}
Suppose that $\cL$ is the full generator of the strong Feller semigroup $\cT$. If $\PP$ solves the 
martingale problem for $\cL$, then for $f \in B_b(M)$ we have
\begin{equation}\label{eq.dist}
  \expect_\PP \big[ f(\bx (t+s))\, \big|\cB_s\big] = \cT (t) f(\bx (s)) \quad \PP-a.s.
\end{equation}
\end{thm}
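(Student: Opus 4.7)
My plan is to establish \eqref{eq.dist} first for $f$ in the resolvent range $\cR(\lambda)B_b(M)$ via a Laplace-transform / integrating-factor argument, and then extend to arbitrary $f \in B_b(M)$ by a bounded-pointwise approximation.

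\textbf{Resolvent case.} Fix $h \in B_b(M)$ and $\lambda > \omega$, and set $f := \cR(\lambda)h$, so that $(f,\lambda f - h) \in \cL$. The martingale property of $\bM^{f,\lambda f - h}$ applied at times $s$ and $s+T$, together with conditional Fubini applied to jointly measurable versions $\phi_g(u,\omega) := \expect_\PP[g(\bx(s+u))\mid\cB_s](\omega)$ (for $g \in \{f,h\}$), yields
\[
\phi_f(T) - f(\bx(s)) = \int_0^T \bigl(\lambda\phi_f(u) - \phi_h(u)\bigr)\,du.
\]
Since the integrand is bounded, $\phi_f(\cdot)$ is absolutely continuous, and the integrating factor $e^{-\lambda T}$ (i.e.\ $\tfrac{d}{dT}[e^{-\lambda T}\phi_f(T)] = -e^{-\lambda T}\phi_h(T)$ a.e.) gives
\[
e^{-\lambda T}\phi_f(T) + \int_0^T e^{-\lambda u}\phi_h(u)\,du = f(\bx(s)).
\]
The identical manipulation applied to the deterministic full-generator relation $\cT(T)f - f = \int_0^T \cT(u)(\lambda f - h)\,du$ (valid as an $\M(M)$-integral and so pointwise after evaluating at $\bx(s)$) yields the analogous identity with $\phi_g(u)$ replaced by $\cT(u)g(\bx(s))$. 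Setting $N_g(T) := \phi_g(T) - \cT(T)g(\bx(s))$ and subtracting gives the key Volterra identity
\[
e^{-\lambda T}N_f(T) = -\int_0^T e^{-\lambda u}N_h(u)\,du, \qquad T \geq 0.
\]
Sending $T \to \infty$, the left-hand side vanishes ($N_f$ is bounded and $e^{-\lambda T} \to 0$), so $\int_0^\infty e^{-\lambda u}N_h(u)\,du = 0$ for every $\lambda$ large enough. Uniqueness of the Laplace transform forces $N_h(u) = 0$ for Lebesgue-a.e.\ $u \geq 0$, $\PP$-a.s. Reinserting this a.e.\ vanishing of $N_h$ into the Volterra identity makes its right-hand side vanish for \emph{every} $T$, and hence $N_f(T) = 0$ identically in $T \geq 0$, $\PP$-a.s. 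This proves \eqref{eq.dist} for all $f \in \cR(\lambda)B_b(M)$.

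\textbf{Extension.} Let $\mathscr{H} := \{f \in B_b(M) : \eqref{eq.dist}\text{ holds $\PP$-a.s.\ for every }t \geq 0\}$. This is a linear space closed under bounded pointwise convergence: if $(f_n) \subset \mathscr{H}$ is uniformly bounded with $f_n \to f$ pointwise, the left-hand side of \eqref{eq.dist} converges by conditional dominated convergence, while the right-hand side converges because $\cT(t) \in \cL(B_b(M), \sigma)$. By the first step, $\mathscr{H} \supset \cR(\lambda)B_b(M) \supset \cR(\lambda)C_b(M) = D(\cL|_{C_b(M)})$, a space which is sequentially $\sigma$-dense in $C_b(M)$; combined with the bp-density of $C_b(M)$ in $B_b(M)$ (standard monotone class), it follows that $\mathscr{H} = B_b(M)$.

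The main subtlety lies in measurability: one must pick a jointly measurable version of $(u,\omega) \mapsto \phi_g(u,\omega)$ so that Fubini and the Volterra ODE make literal sense. This is routine given the joint measurability of $(u,\omega) \mapsto g(\bx(s+u,\omega))$ coming from the continuity of paths, but it is the key technical input behind an otherwise clean computation.
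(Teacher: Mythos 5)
Your proof is correct, modulo the measurability bookkeeping you flag, and that bookkeeping is indeed routine: since $C([0,\infty);M)$ is Polish, one may take $\phi_g(u,\omega)=\int g(\by(s+u))\,\PP_\omega(d\by)$ for a regular conditional probability $(\PP_\omega)$ of $\PP$ given $\cB_s$, and the null sets that arise as you vary $\lambda$ and $T$ are handled by restricting to countable sets and using continuity of $\lambda\mapsto\int_0^\infty e^{-\lambda u}N_h(u)\,du$. However, your route is genuinely different from the paper's. The paper gives no direct argument: it cites \cite[Theorem 4.4.1]{ek}, whose proof derives iterated conditional resolvent identities and then inverts the Laplace transform via the Post--Widder formula $\cT(t)f=\lim_n[\frac nt R(\frac nt,\cL|_{\mathbf{L}})]^nf$; the entire content of the paper's proof is the observation that Ethier--Kurtz's standing hypothesis (strong continuity of $\cT$ on a closed invariant subspace $\mathbf{L}$) can be weakened to sequential $\sigma$-density of the generator's domain plus a $\sigma$-topology version of Post--Widder, both supplied by \cite{k09} for $\mathbf{L}=C_b(M)$, after which ``the reader may check'' that the Ethier--Kurtz argument survives. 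You share the first step --- your integrating-factor identity at $T=\infty$ is exactly the Ethier--Kurtz conditional resolvent formula $\expect_\PP[\int_0^\infty e^{-\lambda u}h(\bx(s+u))\,du\mid\cB_s]=\cR(\lambda)h(\bx(s))$ --- but you replace the inversion machinery entirely: instead of Post--Widder you compare against the deterministic orbit via the Volterra identity $e^{-\lambda T}N_f(T)=-\int_0^Te^{-\lambda u}N_h(u)\,du$ (this rests on the paper's characterization that $(f,g)\in\cL$ iff $\cT(t)f-f=\int_0^t\cT(r)g\,dr$ as $\M(M)$-integrals, which can legitimately be paired with $\delta_{\bx(s)}$) and conclude by uniqueness of scalar Laplace transforms of bounded functions, pointwise in $\omega$. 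Your bp-closure extension through $D(\cL|_{C_b(M)})\subset C_b(M)\subset B_b(M)$ also makes explicit a passage from $C_b(M)$ to $B_b(M)$ that the paper leaves implicit. What your route buys is a self-contained proof that needs no Post--Widder formula in any topology and uses from \cite{k09} only the sequential $\sigma$-density of $D(\cL|_{C_b(M)})$ in $C_b(M)$; what the paper's route buys is brevity and reliance on a quotable theorem, at the cost of asking the reader to verify a nontrivially modified external proof.
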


\begin{proof}
This result was proved in \cite[Theorem 4.4.1]{ek} under the more restrictive assumption 
that there exists a closed subspace $\mathbf{L}$ of $B_b(E)$ which is in some sense `large enough'
such that $\mathbf{L}$ is invariant under $\cT$ and $\cT|_{\mathbf{L}}$ is a strongly continuous contraction semigroup.

We would like to use this result with $\mathbf{L} = C_b(M)$, but, in general, $\cT|_{C_b(M)}$ is \emph{not}
strongly continuous. However, inspecting the proof in \cite{ek}, we see that 
all that was used about $\cT|_{\mathbf{L}}$ in the proof was that the domain of the 
generator is (sequentially) $\|\cdot\|_\infty$-dense in $\mathbf{L}$ and that the Post-Widder inversion formula
\[ \cT(t)f = \|\cdot\|_\infty-\lim_{n\to\infty} \Big[ \frac{n}{t}R\Big(\frac{n}{t}, \cL|_{\mathbf{L}}\Big)\Big]^nf \]
holds for $f \in \mathbf{L}$. 

Since $\cT|_{C_b(M)}$ is $\sigma$-continuous at 0, it follows, cf.\ \cite{k09}, that both properties 
are true with the $\|\cdot\|_\infty$-topology replaced with the $\sigma$-topology. The reader may check that 
the proof of \cite[Theorem 4.4.1]{ek} still works under these weaker assumptions.
\end{proof}

As a consequence of Theorem \ref{t.unique}, we obtain uniqueness in law for the martingale problem 
for $\cL$, cf.\ \cite[Theorem 4.4.2]{ek}. We note that if the martingale problem for $\cL$ is 
well-posed, then $\cT$ is Markovian, which follows easily from equation \eqref{eq.dist}.\medskip 

By Theorem \ref{t.unique}, to prove uniqueness in law for equation $[A, F, G]$, it suffices to 
prove that the law of any solution of $[A, F, G]$ solves the martingale problem for $\cL_\OU + B$. 
In fact, the following lemma shows that it suffices to consider a core for $\cL_\OU + B$.
 
\begin{lem}\label{l.mart}
Let $\cL \subset B_b(M)\times B_b(M)$. If $\PP \in \mathcal{P}(C([0,\infty); M))$ solves the martingale problem
for $\cL$, then $\PP$ solves the martingale problem for $\overline{\cL}^{\sigma\times\sigma}$. 
\end{lem}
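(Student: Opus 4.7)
My plan is to show that the set
\[ \mathscr{S} := \{(f,g) \in B_b(M)\times B_b(M) : \bM^{f,g}\text{ is a }\mathbb{B}\text{-martingale under }\PP\} \]
is $\sigma\times\sigma$-closed in $B_b(M)\times B_b(M)$. Since $\cL\subset\mathscr{S}$ by hypothesis, the conclusion $\overline{\cL}^{\sigma\times\sigma}\subset\mathscr{S}$ will then be immediate. First I would note that for any $(f,g)\in B_b(M)\times B_b(M)$, adaptedness and integrability of $\bM^{f,g}$ are automatic: $f(\bx(t))$ is $\cB_t$-measurable by definition of the coordinate filtration, and joint measurability of $(u,\omega)\mapsto g(\bx(u,\omega))$ combined with Fubini makes $\int_0^t g(\bx(u))\,du$ a $\cB_t$-measurable bounded random variable. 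Hence $\mathscr{S}$ is cut out by the family of linear equations
\[ \expect_\PP\bigl[\one_A (f(\bx(t))-f(\bx(s)))\bigr] = \expect_\PP\Bigl[\one_A \int_s^t g(\bx(u))\,du\Bigr], \]
indexed by $0\leq s\leq t$ and $A\in\cB_s$.

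The key step is to exhibit each such equation as a $\sigma\times\sigma$-continuous linear constraint on $(f,g)$. To that end, for $r\geq 0$ and $A\in\cB_s$ I would define $\nu_{A,r}\in\M(M)$ by $\nu_{A,r}(B):=\PP(A\cap\{\bx(r)\in B\})$, a finite positive measure. Then the left-hand side equals $\dual{f}{\nu_{A,t}-\nu_{A,s}}$, which is $\sigma$-continuous in $f$ by definition of the $\sigma$-topology. For the right-hand side, I would set $\tilde\nu_{s,t,A}(B):=\int_s^t\nu_{A,u}(B)\,du$; a Fubini/monotone-class argument shows that $\tilde\nu_{s,t,A}$ is a well-defined element of $\M(M)$ with total mass at most $(t-s)\PP(A)$, and that
\[ \expect_\PP\Bigl[\one_A\int_s^t g(\bx(u))\,du\Bigr] = \dual{g}{\tilde\nu_{s,t,A}}. \]
Consequently the functional $\Phi_{s,t,A}(f,g):=\dual{f}{\nu_{A,t}-\nu_{A,s}}-\dual{g}{\tilde\nu_{s,t,A}}$ is $\sigma\times\sigma$-continuous, so $\mathscr{S}=\bigcap_{s,t,A}\{\Phi_{s,t,A}=0\}$ is an intersection of $\sigma\times\sigma$-closed hyperplanes and hence $\sigma\times\sigma$-closed.

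The only delicate point I anticipate is the verification that $\tilde\nu_{s,t,A}$ is genuinely a countably additive measure and correctly represents the integral for every bounded measurable $g$. This is essentially a Fubini/monotone-class exercise, but it needs to be handled with some care because $g$ is only bounded and measurable, so no regularity or continuity may be exploited. Once this measure-theoretic step is in place, the remainder of the argument is purely structural: an arbitrary intersection of closed sets is closed, and no subtleties of net convergence in the non-metrizable $\sigma$-topology need to be addressed directly, since continuity of the $\Phi_{s,t,A}$ does the work.
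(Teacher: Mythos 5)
Your proposal is correct and takes essentially the same approach as the paper: your measures $\nu_{A,r}$ and $\tilde\nu_{s,t,A}$ are precisely the paper's $\mu_r$ (the law of $\bx(r)$ under $\PP^A$, $\PP^A(\cdot) := \PP(\cdot \cap A)$) and its $B_b$-integral $I_s^t(\mu_\cdot)$, and the closure argument via $\sigma\times\sigma$-continuous linear constraints is the same. The only difference is technical: you verify that the integrated object is a genuine measure by a direct Fubini/monotone-class computation, whereas the paper obtains it by checking scalar $B_b$-measurability of $r \mapsto \mu_r$ and invoking the very weak integration theory of \cite{k10}.
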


\begin{proof}
$\PP$ solves the martingale problem for $\cL$ if and only if 
\[ \int \one_A(\bx) \Big( f(\bx (t)) - f(\bx (s)) - \int_s^t g(\bx (r))\, dr \,\Big) d\PP(\bx ) = 0 \]
for all $0\leq s < t, A \in \cB_s$ and $(f,g) \in \cL$. Now fix $s,t$ and $A$.

Define the measure $\PP^A$ by $\PP^A(B) := \PP(B\cap A)$ 
and let $\mu_t$ be the distribution of $\bx (t)$ under $\PP^A$. Then the above equation is equivalent to 
\[ \dual{f}{\mu_t} - \dual{f}{\mu_s} - \int_s^t \dual{g}{\mu_r}\, dr = 0 \]
for all $(f,g) \in \cL$. We note that the function $r \mapsto \mu_r$ is $B_b(E)$-integrable on $(s,t)$. 
To see this, note that for $h \in C_b(E)$ the function $r \mapsto \dual{h}{\mu_r}$ is continuous, 
which is easy to see by dominated convergence using that $\bx$ is continuous. By a monotone class argument, 
it follows that $r \mapsto \dual{h}{\mu_r}$ is measurable for all $h \in B_b(E)$. This shows that $r \mapsto 
\mu_r$ is scalarly $B_b(E)$-measurable. Since $\|\mu_r\|_{\TV} \leq 1$ for all $r$, it follows 
that $r\mapsto \mu_r$ is $B_b(E)$-integrable on $(s,t)$, i.e.\ there exists 
a measure $I_s^t(\mu_\cdot) \in \M (E)$ such that 
$\int_s^t \dual{h}{\mu_r}\, dr =  \dual{h}{I_s^t(\mu_\cdot)}$ for all $h \in B_b(E)$.
Consequently, the above equation is equivalent to
\[ \dual{f}{\mu_t} - \dual{f}{\mu_s} - \dual{g}{I_s^t(\mu_\cdot)} = 0 \]
for all $(f,g) \in \cL$. Clearly, this equation is also satisfied for $(f,g) \in \overline{\cL}^{\sigma\times \sigma}$.
Since $0\leq s < t$ and $A \in \cB_s$ were arbitrary, it follow that $\PP$ solves the martingale problem 
for $\overline{\cL}^{\sigma\times\sigma}$.
\end{proof}

\subsection{Uniqueness in law} 
We now return to our study of equation $[A,F,G]$.

A \emph{mild martingale solution} of equation $[A, F, G]$ is a tuple $((\Omega, \Sigma, \P), \FF, \bX, W_H)$, 
where $(\Omega, \Sigma, \P)$ is a probability space, endowed with the filtration $\FF$, $W_H$ is an 
$H$-cylindrical Wiener process with respect to $\FF$ and $\bX$ is a continuous, progressive, $E$-valued 
process such that $\P$-a.s.
\[ X(t) = X(0) + \int_0^tS(t-s)F(X(s))\, ds + \int_0^tS(t-s)GdW_H(s)\]
for all $t \geq 0$. For the definition of the stochastic integral and a characterization of stochastic integrability 
we refer to \cite{vNW05}.

Let us recall the definition of the associated local martingale problem from \cite{k10b}: 

We denote by $\mathscr{D} (A, C^2)$ the vector space of all functions $f : E \to \CR$ of the form
\[ f(x) = \psi (\dual{x}{x_1^*}, \ldots, \dual{x}{x_n^*}) \]
where $n \in \CN, \psi \in C^2(\CR^n)$ and $x_1^*, \ldots, x_n^* \in D(A^*)$. Note that $A$ denotes the 
generator of the semigroup on $\tilde{E}$ so that $A^*$ is the weak$^*$-generator of the adjoint semigroup 
on $\tilde{E}^*$. In particular, $D(A^*) \subset \tilde{E}^*$.

For $f = \psi (\dual{\cdot}{x_1^*}, \ldots, \dual{\cdot}{x_n^*}) \in \mathscr{D}(A, C^2)$ we put
\[
 \begin{aligned}
  L_{[A, F, G]} f(x) := & \sum_{k=1}^n \frac{\partial\psi}{\partial u_k}(\dual{x}{x_1^*}, \ldots , 
\dual{x}{x_n^*})\cdot \big[ \dual{x}{A^*x_k^*} + \dual{F(x)}{x_k^*}\big]\\
& \quad + \half \sum_{k,l=1}^n [G^*x_k^*, G^*x_l^*]_H
\frac{\partial^2\psi}{\partial u_k\partial u_l} (
\dual{x}{x_1^*}, \ldots, \dual{x}{x_n^*})\, .
 \end{aligned}
\]
We define $\hat{\cL}_{[A, F, G]} := \{ (f, L_{[A,F,G]}f) \, : \, f \in \mathscr{D}(A, C^2)\}$ and denote the part of 
$\hat{\cL}_{[A,F,G]}$ in $B_b(E)$ by $\cL_{[A,F, G]}$. 

Under the assumption that $S(t)$ maps $\tilde{E}$ to $E$ such that
$\int_0^T\|S(t)\|_{\cL (\tilde{E}, E)}^2\, dt < \infty$ for all $T>0$, i.e.\ \eqref{eq.hyp2} holds, it 
was proved in \cite{k10b}, that there is a one-to-one correspondence between mild martingale solutions 
of $[A,F,G]$ and solutions of the local martingale problem for $\hat{\cL}_{[A,F,G]}$. More precisely, if 
$\bX$ is a mild martingale solution of $[A,F,G]$, then its distribution $\PP_\bX$ solves the local 
martingale problem for $\hat{\cL}_{[A,F,G]}$. Conversely, if $\PP$ solves the local martingale problem 
for $\hat{\cL}_{[A,F,G]}$, then there exists a mild martingale solution of $[A,F,G]$ with distribution 
$\PP$.

In view of this equivalence, we will say that equation $[A,F,G]$ is \emph{well-posed} resp.\ \emph{uniqueness 
in law holds} for 
equation $[A,F,G]$ if the local martingale problem for $\hat{\cL}_{[A,F,G]}$ is well-posed 
resp.\ uniqueness in law holds for the local martingale problem for $\hat{\cL}_{[A,F,G]}$.

By \cite[Theorem 3.8]{k10b}, if $[A, F, G]$ is well-posed, then all mild martingale solutions of 
equation $[A, F, G]$ are Markov processes with a common transition semigroup $\cT$. 
Moreover, for every $\mu \in \cP (E)$, there exists a mild martingale solution of $[A,F,G]$ with initial 
distribution $\mu$; the distribution of this solution is uniquely determined by $\mu$ and the 
coefficients $A, F$ and $G$.

\begin{rem}
It should be noted that if equation $[A,0,G]$ is well-posed, then there exists a corresponding Ornstein-Uhlenbeck process 
with continuous paths in $E$. Not every Ornstein-Uhlenbeck processes has a modification with continuous paths, 
see \cite{immtz90} for an example with unbounded $G$, i.e.\ $\tilde{E}\neq E$. 
In the case where $\tilde{E} = E$, it seems to be an open question whether an $E$-valued Ornstein-Uhlenbeck process 
has a continuous modification. However, to the best of our knowledge, there is no example known of an Ornstein-Uhlenbeck process 
which does not have a continuous modification but has a transition semigroup which is strongly Feller.  
\end{rem}

We now prove that for bounded $F$ the only possible transition semigroup for 
equation $[A,F,G]$ is the one generated by $\cL_\OU + B$.

\begin{thm}\label{t.uil}
Under the assumptions of Theorem \ref{t.main}, uniqueness in law holds for equation $[A,F,G]$. 
Furthermore, in case of well-posedness, its transition semigroup is generated by $\cL_\OU + B$.
\end{thm}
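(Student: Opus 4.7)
Let $\bX$ be any mild martingale solution of $[A,F,G]$ with law $\PP \in \cP(C([0,\infty); E))$ and initial distribution $\mu = \PP \circ \bx(0)^{-1}$. My plan is to prove that $\PP$ solves the global martingale problem for $\cL_\OU + B$. Combined with Theorem \ref{t.unique}, this yields
\[ \expect_\PP \bigl[ f(\bx(t+s)) \bigm| \cB_s \bigr] = \scrP(t) f(\bx(s)) \quad \PP\text{-a.s.} \]
for every $f \in B_b(E)$ and $s,t \geq 0$, forcing every finite-dimensional distribution of $\PP$ to be determined by $\mu$ and $\scrP$. This gives uniqueness in law and, in the well-posed case, identifies the transition semigroup as $\scrP$, whose full generator is $\cL_\OU + B$. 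The starting input is the equivalence of \cite{k10b}: $\PP$ solves the local martingale problem for $\hat{\cL}_{[A,F,G]}$.

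\textbf{A bounded test class where the two generators agree.} On $\mathscr{D}(A,C^2)$ one has the algebraic identity $L_{[A,F,G]} f = L_{[A,0,G]} f + B f$, with $B$ the operator from \eqref{eq.b}. I would single out the subclass
\[ \cD_0 := \bigl\{ f = \psi(\dual{\cdot}{x_1^*}, \ldots, \dual{\cdot}{x_n^*}) \in \mathscr{D}(A,C^2) : \psi \in C^2_c(\CR^n) \bigr\}. \]
Compact support of $\psi$ tames the linear factors $\dual{x}{A^* x_k^*}$ appearing in $L_{[A,0,G]} f$, so both $f$ and $L_{[A,F,G]} f$ are bounded. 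A direct computation with the explicit Gaussian kernels of $\cT_\OU$ verifies
\[ \cT_\OU(t) f - f = \int_0^t \cT_\OU(s)\, L_{[A,0,G]} f \, ds \]
as an $\M(E)$-integral, so $(f, L_{[A,0,G]} f) \in \cL_\OU$ and hence $(f, L_{[A,F,G]} f) \in \cL_\OU + B$. For such $f$ the local martingale $\bM^{f, L_{[A,F,G]} f}$ is bounded on each finite time interval and is therefore a genuine $\mathbb{B}$-martingale under $\PP$. Writing $\cD_0^* := \{(f, L_{[A,F,G]} f) : f \in \cD_0\} \subset \cL_\OU + B$, the measure $\PP$ solves the martingale problem for $\cD_0^*$.

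\textbf{Closing up to the full generator.} By Lemma \ref{l.mart}, $\PP$ solves the martingale problem for $\overline{\cD_0^*}^{\,\sigma\times\sigma}$, and it remains to identify this closure with $\cL_\OU + B$. For this I would apply Proposition \ref{p.core} to $\scrP$ with $D$ chosen as the range of $(\lambda - (\cL_\OU+B))^{-1}$ for $\lambda$ large enough that Lemma \ref{l.BR}(3) applies; this $D$ is automatically $\scrP$-invariant, contained in $D(\cL_\OU + B)$, and $\sigma$-dense in $B_b(E)$, since $\lambda(\lambda - (\cL_\OU+B))^{-1} g \weak g$ as $\lambda \to \infty$. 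For $f = (\lambda - (\cL_\OU+B))^{-1} g \in D$ the corresponding pair is $(f, \lambda f - g)$, and one approximates $g$ in $\sigma$ by functions from $\cD_0$ and uses the $\sigma$-continuity of $\cR(\lambda)$, of $B\cR(\lambda)$ (by Hypothesis \ref{hyp1}(1)), and of $(I - B\cR(\lambda))^{-1}$ (Lemma \ref{l.BR}) to produce the required $\sigma\times\sigma$-approximation of $(f, \lambda f - g)$ from $\cD_0^*$.

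\textbf{Main obstacle.} Everything except this final closure step is essentially routine: the decomposition $L_{[A,F,G]} = L_{[A,0,G]} + B$, the truncation to a bounded cylindrical test class, the automatic passage from local to genuine martingales, and the extraction of the transition formula from Theorem \ref{t.unique}. The principal difficulty is coupling $f$ and its image under $\cL_\OU + B$ in the $\sigma\times\sigma$-closure: pointwise density of cylindrical functions in $B_b(E)$ is classical, but one must combine it with the regularity statements for $B$ encoded in Hypothesis \ref{hyp1} to ensure both components converge simultaneously, so that $\overline{\cD_0^*}^{\,\sigma\times\sigma}$ actually exhausts the full generator $\cL_\OU + B$.
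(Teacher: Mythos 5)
Your overall skeleton is the same as the paper's: pass from the local martingale problem for $\hat{\cL}_{[A,F,G]}$ to the genuine martingale problem for a bounded subclass on which $L_{[A,F,G]}=L_{[A,0,G]}+Bf$ holds, close up in $\sigma\times\sigma$ via Lemma \ref{l.mart}, and conclude with Theorem \ref{t.unique}. However, two of your steps contain genuine gaps.

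First, the claim that compact support of $\psi$ makes $L_{[A,F,G]}f$ bounded is false. The first-order part of $L_{[A,0,G]}f$ is $\sum_k \partial_k\psi(\dual{x}{x_1^*},\ldots,\dual{x}{x_n^*})\,\dual{x}{A^*x_k^*}$; compact support of $\psi$ confines $x$ only in the coordinates $\dual{x}{x_1^*},\ldots,\dual{x}{x_n^*}$, whereas $\dual{x}{A^*x_k^*}$ is a \emph{different} functional. Unless $A^*x_k^*$ happens to lie in $\mathrm{span}\{x_1^*,\ldots,x_n^*\}$, this term is unbounded on the cylinder $\{x : (\dual{x}{x_j^*})_j \in \mathrm{supp}\,\psi\}$, since one can make $\dual{x}{A^*x_k^*}$ arbitrarily large while keeping the cylinder coordinates fixed. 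Consequently $\cD_0^*$ is in general not contained in $B_b(E)\times B_b(E)$, the local martingales $\bM^{f,L_{[A,F,G]}f}$ are not bounded on finite time intervals, and the pairs $(f,L_{[A,F,G]}f)$ need not belong to $\cL_\OU+B$ at all. This is exactly why the paper works with $\cL_{[A,F,G]}$, defined as the part of $\hat{\cL}_{[A,F,G]}$ in $B_b(E)$, and why the $\sigma$-density and $\cT_\OU$-invariance of its domain are nontrivial cited facts (\cite{vNG03}, \cite{gk}) rather than consequences of taking $\psi$ compactly supported; compare also Step 3 of the proof of Theorem \ref{t.limit}, where the defining functionals are taken of the special form $R(n,A^*)x^*$ precisely so that $A^*$ maps them back into the span of the cylinder coordinates.

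Second, your closure step is circular. Approximating $g$ by $g_\alpha\in\cD_0$ and applying $\cR(\lambda)(I-B\cR(\lambda))^{-1}$ produces the pairs $\bigl((\lambda-\cL_\OU-B)^{-1}g_\alpha,\ \lambda(\lambda-\cL_\OU-B)^{-1}g_\alpha-g_\alpha\bigr)$. These lie in $\cL_\OU+B$, but they are \emph{not} elements of $\cD_0^*$: the resolvent of the perturbed semigroup does not map $\cD_0$ into $\cD_0$, so their first components are not cylindrical functions. Hence no net from $\cD_0^*$ converging to a given $(f,\lambda f-g)$ has been exhibited, and you are back at the original problem. Likewise, invoking Proposition \ref{p.core} with $D=\mathrm{Ran}\,(\lambda-\cL_\OU-B)^{-1}=D(\cL_\OU+B)$ is vacuous: it only says that the graph of the generator is a core for itself. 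The paper's mechanism is different and is the essential point: apply Proposition \ref{p.core} to $\cT_\OU$ with $D$ the domain of $\cL_{[A,0,G]}$ (this is where invariance under the semigroup is genuinely used) to conclude that $\cL_{[A,0,G]}$ is a core for $\cL_\OU$; then transfer the core property through the perturbation by taking a net $(u_\alpha,v_\alpha)\in\cL_{[A,0,G]}$ with $u_\alpha\weak f$, $v_\alpha\weak g-Bf$, and using the identity $Bu_\alpha = B\cR_\OU(\lambda)(\lambda u_\alpha - v_\alpha)$ together with the $\sigma$-continuity of $B\cR_\OU(\lambda)$ (Hypothesis \ref{hyp1}(1)) to get $Bu_\alpha\weak Bf$, so that $(u_\alpha, v_\alpha+Bu_\alpha)\in\cL_{[A,F,G]}$ converges to $(f,g)$ in $\sigma\times\sigma$. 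That identity is the device your argument is missing to make both components converge simultaneously.
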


\begin{proof}
In view of Theorem \ref{t.unique}, it suffices to prove that a solution of the local martingale problem 
for $\hat{\cL}_{[A,F,G]}$ solves the martingale problem for $\cL_\OU + B$.

Let $\PP$ be a solution of the local martingale problem for $\hat{\cL}_{[A,F,G]}$ and $(f,g) \in \cL_{[A,F,G]}$. 
Since $(f,g) \in \hat{\cL}_{[A,F,G]}$, the process $\bM^{f,g}$ is a local $\mathbb{B}$-martingale under $\PP$. 
By the boundedness of $f$ and $g$, an approximation argument shows that under $\PP$, the process $\bM^{f,g}$ is actually a 
$\mathbb{B}$-martingale. Hence, $\PP$ solves the martingale problem for $\cL_{[A,F,G]}$. An easy computation shows 
that $\cL_{[A,F,G]} = \cL_{[A, 0, G]} + B$, where $B$ is defined by \eqref{eq.b}. 
Standard arguments, see \cite[Theorem 6.6]{vNG03}, \cite[Lemma 2.6]{gk}, show 
that the domain of $\cL_{[A, 0, G]}$ is invariant under $\cT_\OU$ and $\beta_0(E)$-dense 
in $C_b(E)$, hence $\sigma$-dense in $B_b(E)$. Thus, by Proposition \ref{p.core}, 
$\cL_{[A, 0, G]}$ is a core for $\cL_\OU$.\smallskip

We claim that $\cL_{[A, 0, G]} + B$ is a core for $\cL_\OU + B$. 

To see this, let $(f, g) \in \cL_\OU + B$. Then $(f, g- Bf) \in \cL_\OU $. Since $\cL_{[A, 0, G]}$ is a core 
for $\cL_\OU$, there is a net $(u_\alpha, v_\alpha )
\in \cL_{[A, 0, G]}$ such that $u_\alpha \weak f$ and $v_\alpha\weak\tilde{g} := g- Bf$.  

Note that for every  $\alpha$ we have $(u_\alpha, v_\alpha + B u_\alpha ) \in \cL_{[A, 0, G]} + B$. 
Furthermore, 
\[ Bu_\alpha = B\cR_\OU (\lambda )(\lambda u_\alpha - v_\alpha) \weak B\cR_\OU (\lambda )(\lambda f - \tilde{g}) = Bf\, , \]
since $B\cR_\OU(\lambda )$ is $\sigma$-continuous. Hence, 
$v_\alpha + Bu_\alpha\weak g-Bf + Bf = g$. Since $u_\alpha$ converges to $f$ with respect to $\sigma$, 
we have proved that $(f,g)$ belongs to the $\sigma\times\sigma$-closure of $\cL_{[A, 0, G]}+ B$.\smallskip 

By Lemma \ref{l.mart}, $\PP$ is a solution to the martingale problem for $\cL_\OU + B$. 
\end{proof}

\subsection{Existence of solutions}
We are nearly ready to finish the proof of Theorem \ref{t.main}. It remains to prove the existence of 
solutions to equation $[A,F,G]$. The following Theorem is the crucial ingredient to finish the proof.

\begin{thm}\label{t.limit}
Under the assumptions of Theorem \ref{t.main}, let $F_n$ be a bounded sequence that converges pointwise to $F$ 
such that $[A, F_n, G]$ is well-posed for all $n \in \CN$. Then $[A, F, G]$ is well-posed.
\end{thm}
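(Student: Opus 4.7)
Fix $x \in E$ and let $\PP_n^x \in \cP(C([0,\infty); E))$ be the distribution of the mild martingale solution of $[A, F_n, G]$ starting at $x$, which exists by well-posedness of $[A, F_n, G]$. Since uniqueness in law for $[A, F, G]$ is already supplied by Theorem \ref{t.uil}, only existence remains. The plan is to extract a weak subsequential limit $\PP^x$ of $(\PP_n^x)$ on $C([0,\infty); E)$, identify it with the Markov process of transition semigroup $\scrP$ generated by $\cL_\OU + B$, and then transfer this to the local martingale problem for $\hat{\cL}_{[A, F, G]}$.

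For tightness on $C([0, T]; E)$, I would use the mild representation
\[ X_n(t) = S(t)x + \int_0^t S(t-s) F_n(X_n(s))\, ds + Z(t), \qquad Z(t) := \int_0^t S(t-s) G\, dW_H(s), \]
the uniform bound $M := \sup_n \|F_n\|_\infty$, compactness of $S(t)$ on $E$ from Proposition \ref{p.ou}(2), hypothesis \eqref{eq.hyp2} on $\|S(t)\|_{\cL(\tilde E, E)}$, and the existence of a continuous $E$-valued version of the stochastic convolution $Z$ (the same for all $n$, available by well-posedness of $[A, 0, G]$). These together yield standard compact-containment and modulus-of-continuity estimates; Prokhorov's theorem with diagonalization furnishes a subsequence, not relabeled, with $\PP_n^x \to \PP^x$ weakly on $C([0,\infty); E)$.

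The central step is the identification. By Theorem \ref{t.uil}, the transition semigroup of $\PP_n^x$ is $\scrP_n$, generated by $\cL_\OU + B_n$ with $B_n f(y) := \dual{F_n(y)}{Df(y)}$. Pointwise bounded convergence $F_n \to F$ and Hypothesis \ref{hyp.sf} applied to $\cT_\OU$ (Proposition \ref{p.ou}(3)) give $B_n \cT_\OU(r) h(y) \to B \cT_\OU(r) h(y)$ pointwise with uniform bound $M\varphi(r)\|h\|_\infty$, where $\varphi(r) := \|S(r)\|_{\cL(E, H_{Q_r})}$. For $t_0$ small enough that $M\int_0^{t_0}\varphi(r)\, dr < 1$, the Volterra operator from \eqref{eq.volterra} built with $B_n$ has norm uniformly bounded in $n$; expanding the Neumann series for $\scrP_n$ on $[0, t_0]$ and applying dominated convergence iteratively --- pushed through the kernel operator $\cT_\OU(s)$ at each step --- yields $\scrP_n(t) h(y) \to \scrP(t) h(y)$ pointwise for every $h \in B_b(E)$ and $t \le t_0$, and the semigroup property extends this to all $t \ge 0$. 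Weak convergence $\PP_n^x \to \PP^x$ applied to $h \in C_b(E)$ then forces $\expect_{\PP^x}[h(\bx(t))] = \scrP(t) h(x)$, and iterating for finite-dimensional distributions (using tightness of the marginals of $\scrP_n$ inherited from the tightness step) identifies $\PP^x$ as a Markov process with transition semigroup $\scrP$. By the converse of Theorem \ref{t.unique}, $\PP^x$ solves the martingale problem for $\cL_\OU + B$.

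As shown in the proof of Theorem \ref{t.uil}, $\cL_{[A, 0, G]} + B = \cL_{[A, F, G]}$ is a $\sigma \times \sigma$-core for $\cL_\OU + B$, so Lemma \ref{l.mart} yields that $\PP^x$ solves the martingale problem for $\cL_{[A, F, G]}$. A standard localization upgrades this to the local martingale problem for $\hat{\cL}_{[A, F, G]}$, and the equivalence in \cite{k10b} converts it into a mild martingale solution of $[A, F, G]$ with distribution $\PP^x$, completing the proof of well-posedness. The main obstacle I anticipate is the semigroup convergence $\scrP_n(t) h \to \scrP(t) h$: since the kernel of $\scrP_n$ itself varies with $n$, passing discontinuous test functions $h_n \to h$ through $\scrP_n$ is delicate. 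The structural fact that $B_n$ always enters sandwiched between $\cT_\OU$-kernels in the Volterra expansion is what permits dominated convergence to be applied at each step, and constitutes the technical heart of the argument.
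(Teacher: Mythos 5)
Your proposal follows the same overall architecture as the paper's proof: tightness of the laws $\PP_n$ (via the uniformly bounded drifts, compactness coming from immediate compactness of $\bS$, and the common law of the Ornstein--Uhlenbeck part), extraction of a weak limit $\PP$, identification of $\PP$ through convergence of the perturbed objects obtained by dominated convergence in a Neumann series with uniformly small norm, and finally transfer to the local martingale problem for $\hat{\cL}_{[A,F,G]}$. The genuine difference is where the convergence is proved and how the limit is identified: you work at the semigroup level ($\scrP_n(t)h \to \scrP(t)h$) and then identify $\PP$ as a Markov process with transition semigroup $\scrP$ via finite-dimensional distributions, whereas the paper works at the resolvent level (Step 1: $\cR_n(\lambda)f \to \cR_\infty(\lambda)f$ with respect to $\beta_0(E)$) and then verifies the martingale property of $\bM^{\cR_\infty(\lambda)f,\, \lambda\cR_\infty(\lambda)f - f}$ directly by testing against the functionals $\Phi_n$, using that $\{(\cR_\infty(\lambda)f, \lambda\cR_\infty(\lambda)f - f)\, :\, f \in B_b(E)\} = \cL_\OU + B$. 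The paper's route avoids any discussion of the Markov property or finite-dimensional distributions of the limit, which is precisely where your version runs into trouble.

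Two of your steps have real gaps. First, in the identification step, pointwise convergence $\scrP_n(\Delta)h \to \scrP(\Delta)h$ together with tightness of the marginals is \emph{not} sufficient to pass the $n$-dependent test functions $h_1 \cdot \scrP_n(\Delta)h_2$ through the weak convergence $\PP_n \to \PP$: one needs convergence uniform on compact sets (consider shrinking bumps $u_n$ with $u_n(x_n) = 1$, $x_n \to x$, $u_n \to 0$ pointwise, integrated against $\delta_{x_n} \to \delta_x$). The missing ingredient is exactly the upgrade the paper performs with the ultra-Feller property: $\cR_n(\lambda) = \cR_\OU(\lambda)(I - B_n\cR_\OU(\lambda))^{-1}$ has the fixed ultra-Feller factor $\cR_\OU(\lambda)$ on the left, which converts bounded pointwise convergence into $\beta_0(E)$-convergence; in your setting the analogous fact (every term of the Volterra series carries $\cT_\OU$ as its leftmost factor) would serve, but it has to be stated and used. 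Second, the phrase ``a standard localization upgrades this to the local martingale problem for $\hat{\cL}_{[A,F,G]}$'' hides the paper's Step 3, which is not mere localization: for $f = \psi(\dual{\cdot}{x^*})$ with $x^* \in D(A^*)$, the function $L_{[A,F,G]}f$ contains the unbounded term $\dual{x}{A^*x^*}$, so the pair $(f, L_{[A,F,G]}f)$ does not belong to $\cL_\OU + B \subset B_b(E)\times B_b(E)$ at all, and stopping cannot make a martingale problem apply to pairs outside the operator. The paper must construct approximants $f_n = \psi(\dual{\cdot}{nR(n,A^*)x^*})\big/\big(1 + \dual{\cdot}{R(n,A^*)x^*}^2\big) \in D(\cL_\OU + B)$ whose images $L_{[A,F,G]}f_n$ are uniformly bounded on bounded sets and converge pointwise, and only then stop at $\tau_N$ and pass to the limit. (A minor point: your appeal to Lemma \ref{l.mart} is backwards --- since $\cL_{[A,F,G]}$ is a core for, hence a subset of, $\cL_\OU + B$, solving the martingale problem for $\cL_\OU + B$ trivially implies solving it for $\cL_{[A,F,G]}$; Lemma \ref{l.mart} is needed for the opposite passage, in Theorem \ref{t.uil}.)
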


\begin{proof}
We denote by $\scrP_n$ the transition semigroup of $[A, F_n, G]$ and by $\scrP$ the semigroup generated by $\cL_\OU + B$. 
Note that by Theorem \ref{t.uil}, the full generator of $\scrP_n$ is $\cL_\OU + B_n$, where 
$B_n$ is defined by \eqref{eq.b} with $F$ replaced with $F_n$.

We proceed in several steps.\medskip 

{\it Step 1 --}
We prove that for $\lambda >0$ large enough $(\lambda - \cL_\OU - B_n)^{-1}f$ converges to $(\lambda - \cL_\OU - B)^{-1}f$ 
with respect to $\beta_0(E)$
for all $f \in B_b(E)$.

We again put $\varphi (t) := \|S(t)\|_{\cL (E, H_{Q_t})}$. Let us first note that
\[
\begin{aligned}
 |B_n\cT_\OU (t)f(x) &- B\cT_\OU(t)f(x)| =  \Big| \int_E f(S(t)x+z)\phi^{Q_t}_{S(t)F_n(x) - S(t)F(x)}\, d\mu_t(z)\Big|\\
& \leq  \|f(S(t)x + \cdot \,)\|_{L^2(\mu_t)} \|\phi_{S(t)(F_n(x) - F(x))}^{Q_t}\|_{L^2(\mu_t)}\\
& \leq  \varphi (t)\|f(S(t)x + \cdot \,)\|_{L^2(\mu_t)}\|F_n(x) - F(x)\|_E\, ,
\end{aligned}
\]
where we have used that $\phi^{Q_t}$ is an isometry from $H_{Q_t}$ to $L^2(\mu_t)$ in the last step.
Hence, by \eqref{eq.resrep}, 
\[  |\big[B_n\cR_\OU (\lambda ) - B\cR_\OU(\lambda )\big](x)| 
\leq \|f\|_\infty \Big[
 e^{\lambda \eps} \varphi (\eps) + \int_0^\eps e^{-\lambda t} \varphi (t)\, dt\Big]\cdot \|F_n(x) - F(x)\|
\to 0
\]
as $n \to \infty$, by dominated convergence. 

Next note that 
\[ 
\begin{aligned}
|(B_n\cR_\OU(\lambda ))^2f(x) - & (B\cR_\OU(\lambda ))^2f(x)| \leq 
 \big|B_n\cR_\OU(\lambda )\big[ B_n\cR_\OU(\lambda)f - B\cR_\OU(\lambda )f\big](x)\big|\\
 &+ \big|\big[B_n \cR_\OU(\lambda) - B \cR_\OU(\lambda )\big]
B\cR_\OU(\lambda )f(x)\big|
\end{aligned}
\]
where the last term converges to 0 as $n \to \infty$ by the above. Let us put $g_n := B_n\cR_\OU(\lambda )f$ 
and $g := B\cR_\OU(\lambda )f$, so that $g_n \weak g$. Let $C := \sup_n\|F_n\|_\infty$. 
Arguing similar as above, we see that
\[
\begin{aligned}
|B_n\cR_\OU(\lambda )(g_n-g)(x)| & \leq e^{-\lambda \eps}C\|g_n(S(\eps)x +\cdot ) - g(S(\eps )x + \cdot )\|_{L^2(\mu_\eps)}\\
& + C \int_0^\eps e^{-\lambda t}\varphi (t) \|g_n(S(t)x +\cdot ) - g(S(t)x+ \cdot )\|_{L^2(\mu_t)}\, dt
\end{aligned}
\]
which converges to $0$ as $n \to \infty$ by dominated convergence.

Iterating, it follows that $[B_n\cR_\OU(\lambda )]^kf \weak [B\cR_\OU(\lambda )]^kf$ for all $k \in \CN$. 
Now we pick $\lambda$ large enough 
so that $\|B_n\cR_\OU(\lambda )\| \leq c<1$ for all $n \in \CN$. Since $(I - B_n\cR_\OU(\lambda ))^{-1} 
= \sum_{k=0}^\infty [B_n\cR_\OU(\lambda )]^k$, where the series converges in operator norm, 
we see that 
$(I-B_n\cR_\OU(\lambda ))^{-1}f \weak (I - B\cR_\OU(\lambda ))^{-1}f$ for all $f \in B_b(E)$. 

It follows that 
\[ (\lambda - \cL_\OU - B_n)^{-1}f = \cR_\OU(\lambda ) (I - B_n\cR_\OU(\lambda ))^{-1}f \weak 
(\lambda - \cL_\OU - B)^{-1}f .\]
In fact, since $\cR_\OU(\lambda )$ is ultra-Feller, cf.\ \cite[Proposition I.5.12]{revuz}, it follows that 
the convergence is with respect to $\beta_0(E)$.
To simplify notation, we write $\cR_n(\lambda ) := (\lambda - \cL_\OU - B_n)^{-1}$ and $\cR_\infty (\lambda ) 
:= (\lambda - \cL_\OU - B)^{-1}$ for the rest of this proof.
\medskip 

{\it Step 2 --} We construct solutions of the martingale problem for $\cL_\OU + B$.

Fix $x \in E$ and let $\PP_n$ be a solution of the local martingale problem for $\hat{\cL}_{[A, F_n, G]}$ with 
initial distribution $\delta_x$. 

By \cite{k10b}, there exists a mild martingale solution $((\Omega_n, \Sigma_n, \P_n), \FF_n, \bX_n, W_H^n)$
 of equation $[A, F_n, G]$ with 
distribution $\PP_n$. Note that we have
\[ X_n(t) = S(t)x + \int_0^t S(t-s)F_n(X_n(s))\, ds + \int_0^t S(t-s)GdW_H^n(s) \]
$\P_n$-almost surely for all $t\geq 0$. Since $\bS$ is immediately compact, the map 
$\phi \mapsto [t \mapsto \int_0^t S(t-s)\phi (s)\, ds]$ is compact as an operator from $L^\infty((0,T); E)$ 
to $C([0,T]; E)$, see \cite[Proposition 1]{gg94a}. By the uniform boundedness of $F_n$, the paths of 
$F_n(X_n(\cdot))$ belong to a bounded subset of $L^\infty (0,T; E)$ almost surely for all $n$. Hence, there is a compact 
subset $\mathscr{C}_T$ of $C([0,T]; E)$ such that for all $n \in \CN$ we have 
$\bY_n := \int_0^\cdot S(\cdot -s)F_n(X_n(s))\, ds \in \mathscr{C}_T$, $\P_n$-almost surely.

Since equation $[A, 0, G]$ is well-posed, the distribution $\mathbf{Q}$ of  $\bX_n - \bY_n=: \mathbf{Z}_n$ is independent of 
$n$, it is the distribution of the solution of 
$[A, 0, G]$ starting at $x$. Hence, given $\eps >0$, there exists a compact 
set $\mathscr{K}_{T,\eps} \subset C([0,T]; E)$ such that 
$\P_n(\mathbf{Z}_n^{[0,T]} \in \mathscr{K}_{T,\eps}) = \mathbf{Q}(\bx|_{[0,T]} \in \mathscr{K}_{T,\eps}) \geq 1-\eps$
for all $n\in \CN$.

Note that the set $\mathscr{C}_T + \mathscr{K}_{T,\eps}$ is compact. Furthermore, 
\[ \PP_n^{[0,T]}((\mathscr{C}_T + \mathscr{K}_{T,\eps})^c) = \P_n\big( X(t) \not\in \mathscr{C}_T + \mathscr{K}_{T,\eps} 
 \quad \mbox{for some} \, t \in [0,T]\big) \leq 0 + \eps\, .
\]
This proves that the restrictions $\PP_n^{[0,T]}$ of $\PP_n$ to $C([0,T]; E)$ are tight. Since $T$ was arbitrary, 
the measures $\PP_n$ are tight, see \cite[Proposition 16.6]{kallenberg}.

Passing to a subsequence, we may assume that $\PP_n$ converges weakly to a measure $\PP$. We claim that 
$\PP$ solves the martingale problem for $\cL_\OU + B$.\medskip  

To prove this, we proceed as in \cite[Lemma 3.9]{k10b}. 
Fix $0\leq r_1<r_2< \cdots < r_k \leq s <t$, functions $h_1, \ldots , h_k  \in C_b(E)$ and $f \in B_b(E)$ and 
define for $n \in \CN\cup\{\infty\}$ the function $\Phi_n \in C_b(C([0,\infty ); E))$ by 
\[
 \Phi_n (\bx ) := \Big[ \cR_n(\lambda )f (\bx (t)) - \cR_n(\lambda )f(\bx (s))
- \int_s^t [\lambda \cR_n(\lambda )f -f] (\bx (r))\,dr \Big] \prod_{j=1}^kh_j(\bx (r_j))\, ,
\]
where $\lambda >0$ is chosen such that $\cR_n(\lambda )f \to \cR_\infty (\lambda )f$ with respect to $\beta_0(E)$
for all $f \in B_b(E)$. This is possible by Step 1.

We claim that $\Phi_n \to \Phi_\infty$ with respect to $\beta_0(C([0,\infty ); E))$.
To see this, first note that by the uniform boundedness of the $F_n$, there exist constants $C_1$ and $C_2$ such that 
$\|\cR_n(\lambda )\| \leq C_2$ and $\|\lambda \cR_n(\lambda )\| \leq C_2$. Putting $C_3 := 
\prod_{j=1}^k \|h_j\|_\infty$, it follows that $\Phi_n (\bx ) \leq (2C_1 + (t-s)C_2 )C_3$ for all 
$\bx \in C([0,\infty ); E)$, i.e.\ $\Phi_n$ is uniformly bounded. Hence, to prove the 
$\beta_0(C([0,\infty ); E))$-convergence, it suffices to prove uniform convergence on the compact subsets 
of $C([0,\infty ); E)$.

Let $\mathscr{C}$ be a compact subset of $C([0,\infty ); E)$. By the Arzel\`a-Ascoli theorem, there exists 
a compact subset $K$ of $E$ such that $\bx (r) \in K$ for all $0\leq r \leq t$, whenever $\bx \in \mathscr{C}$.
By Step 1, given $\eps>0$ we may choose $n_0$ such that $|\cR_n(\lambda )f(x) - \cR_\infty (\lambda )f(x)| \leq \eps$ for 
all $x \in K$ and $n\geq n_0$. Hence, for $n \geq n_0$ and $\bx \in \mathscr{C}$, 
\[ |\Phi_n(\bx ) - \Phi_\infty (x)| \leq C_3(2\eps + (t-s)\lambda \eps)\, .\]
This proves that $\Phi_n \to \Phi$, uniformly on $\mathscr{C}$. 

Since $\PP_n$ converges weakly to $\PP$, the sequence $\PP_n$ is tight, whence $p(\Psi ) := \sup_n 
\int |\Psi | d\PP_n$ is a $\beta_0(C([0,\infty ); E))$-continuous seminorm. It follows that
\[ |\dual{\Phi_\infty}{\PP} - \dual{\Phi_n}{\PP_n}| \leq 
 |\dual{\Phi_\infty}{\PP -\PP_n}| + p(\Phi_n - \Phi_\infty) \to 0
\]
as $n\to \infty$. Thus $\dual{\Phi_\infty}{\PP} = \lim_{n\to\infty}\dual{\Phi_n}{\PP_n} = 0$, 
since $\PP_n$ solves the martingale problem for $\cL_\OU + B_n$. 

Since the functions $h_j$ and the points $r_j$, as well as the number $k$ were arbitrary, it follows that 
$\bM^{\cR_\infty(\lambda )f, \lambda\cR_\infty (\lambda )f -f}$ is a martingale under $\PP$. 
Noting that
\[ \{ (\cR_\infty(\lambda )f, \lambda \cR_\infty (\lambda )f -f)\, : \, f \in B_b(E)\} = \cL_\OU + B\, ,\] it 
follows that $\PP$ solves the martingale problem for $\cL_\OU + B$.\medskip 

{\it Step 3 --} $\PP$ solves the local martingale problem for $\hat{\cL}_{[A, F, G]}$.

Note that this step finishes the proof since $x \in E$ was arbitrary. By \cite[Lemma 3.7]{k10b}, it 
suffices to prove that $\bM^{f, L_{[A, F, G]}f}$ is a local martingale under $\PP$ for all 
$f$ of the form $f = \psi (\dual{\cdot}{x^*})$, where $\psi \in C_c^\infty (\CR)$ and $x^* \in D(A^*)$.

Let such $\psi$ and $x^*$ be given and put, similarly as in \cite[Proof of Theorem 4.5]{gk}, 
\[ f_n(x) := \frac{\psi (\dual{x}{nR(n,A^*)x^*})}{1 + \dual{x}{R(n, A^*)x^*}^2} \quad n \in \CN\, .\]
  Elementary computations and estimates show that
\begin{enumerate}
 \item $f_n \in D(\cL_\OU + B)$ and $f_n \to f$ pointwise.
 \item $g_n:= L_{[A,F, G]}f_n$ is uniformly bounded on bounded sets and converges pointwise to $g:= L_{[A,F,G]}f$.
\end{enumerate}
Since $\PP$ solves the martingale problem for $\cL_\OU + B$, it follows that for every $n \in \CN$ the process 
$\bM^{f_n, g_n}$ is a martingale under $\PP$. Fix $N \in \CN$ and let 
$\tau_N(\bx ) := \inf\{ t>0 \, : \, \|\bx (t)\|\geq N\}$. By optional sampling, also the stopped 
process $\bM^{f_n, g_n}_{\tau_N}$ is a martingale under $\PP$.

Similar as in Step 2, fix $0 \leq r_1 < r_2<\cdots < r_k < s, t$ and functions $h_1, \ldots , h_k \in C_b(E)$.
We define
\[ \Phi_n(\bx) := \Big[ f_n(\bx (t\wedge \tau_N) - f_n(\bx (s\wedge \tau_N)) 
 - \int_s^t \one_{[0,\tau_N]}(r) g_n(\bx (r))\, dr\Big] \prod_{j=1}^k h_j(r_j)\, ,
\]
and $\Phi$ similarly, replacing $f_n$ with $f$ and $g_n$ with $g$. By the above, $\Phi_n$ is uniformly bounded 
and converges pointwise to $\Phi$. Hence, by dominated convergence, 
\[ \int\Phi \, d\PP = \lim_{n\to \infty}\int \Phi_n\, d\PP = 0 \, .\]
It follows that $\bM^{f, g}_{\tau_N}$ is a martingale under $\PP$. Since $\tau_N \uparrow \infty$ 
almost surely, $\bM^{f, g}$ is a local martingale under $\PP$. This 
finishes the proof.
\end{proof}

We are now ready to finish the proof of Theorem \ref{t.main}:

\begin{proof}[Proof of Theorem \ref{t.main}]
As a consequence of Theorem \ref{t.uil}, uniqueness in law holds for equation $[A,F,G]$, for every 
bounded, measurable $F$. Moreover, if $[A,F,G]$ is well-posed, then the associated transition semigroup 
is generated by $\cL_\OU +B$. This semigroup is strongly Feller by Theorem \ref{t.pert}. That it is irreducible 
follows from the irreducibility of $\cT_\OU$ as in \cite[Proposition 4]{cmg95}. It remains to prove existence of mild 
martingale solutions for equation $[A, F, G]$. 

First consider the situation where $F$ is a bounded Lipschitz continuous function. We fix a probability space $(\Omega, \Sigma, \P)$ 
on which for every $x \in E$ a solution $\bX_x$ of equation $[A,0,G]$ with initial datum $x$ exists.
For $p \in (1,\infty)$ and $T>0$, we define $\Lambda_x : L^p(\Omega; C([0,T]; E)) \to L^p(\Omega; C([0,T]; E))$ by
\[
 \big[\Lambda_x (\phi)\big](t,\omega) := X_x(t,\omega) + \int_0^t S(t-s)F(\phi (s,\omega))\, ds\, .
\]
It is easy to see that $\Lambda_x$ is Lipschitz continuous on $L^p(\Omega; C([0,T];E))$ with Lipschitz constant 
$L(T)$ independent of $x$ and $L(T) \to 0$ as $T\to 0$. Standard arguments show that $\Lambda_x$ has a unique fixed-point 
in $L^p(\Omega; C([0,T];E))$ for all $T>0$. Clearly, this solution solves equation $[A,F,G]$ on the time-interval $[0,T]$.
By uniqueness in law, these solutions can be glued together to a solution defined for all positive times.
\smallskip

Now let $\mathscr{H}$ be the bp-closure, cf.\ \cite[Section 3.4]{ek}, of $\mathsf{Lip}_b(E, E)$, i.e.\ 
the smallest subset of $B_b(E, E)$ which contains $\mathsf{Lip}_b(E, E)$ and if $F_n$ is a bounded sequence in 
$\mathscr{H}$ which converges pointwise to $F \in B_b(E, E)$, then also $F \in \mathscr{H}$.

By Theorem \ref{t.limit}, the set of $F$ such that $[A,F,G]$ is well-posed is bp-closed and, by the above, 
contains $\mathsf{Lip}_b(E, E)$. Hence, $[A,F,G]$ is well-posed for every $F \in \mathscr{H}$. 

For $\psi \in \mathsf{Lip}_b(E, \CR)$ and $x \in E$, we have $F=\psi\otimes x \in \mathsf{Lip}_b(E, E)\subset \mathscr{H}$.
By \cite[Proposition 3.4.2]{ek}, $\psi\otimes x \in \mathscr{H}$ for every $\psi \in B_b(E)$. Since $\mathscr{H}$
is a subspace, cf.\ \cite[Lemma 4.1]{ek}, any simple function belongs to $\mathscr{H}$. But then also any bounded, 
measurable function $F$ belongs to $\mathscr{H}$. Hence, $[A,F,G]$ is well-posed for every bounded, measurable 
function $F$.
\end{proof}

\section{Asymptotic behavior}\label{sect.inv}
Given a Markovian transition semigroup $\cT$ on $B_b(E)$, a probability measure $\mu$ is called an
\emph{invariant measure for $\cT$} if
\[ \int_E \cT(t)f\, d\mu = \int_E f \, d\mu \quad\quad\forall\, t\geq 0\, ,\, f \in B_b(E)\, .\]
By Doob's theorem \cite[Theorem 4.2.1]{dpz_erg}, if $\cT$ is strongly Feller and irreducible, then there 
is at most one invariant measure for $\cT$. Moreover, if there is an invariant measure $\mu$ for $\cT$, 
then it is strongly mixing and $\cT(t)f$ converges pointwise to $\int_E f\, d\mu$ for all $f \in B_b(E)$.

It was proved in \cite[Proposition 4.5]{vNW06}, that if $[A,0,G]$ is well-posed and $\bS$ is uniformly 
exponentially stable, then there exists an invariant measure for the associated transition semigroup.
Conversely, under the Hypothesis of Theorem \ref{t.main}, if there exists an invariant measure for the transition 
semigroup associated to $[A,0, G]$, then $\bS$ is uniformly 
exponentially stable, see \cite[Theorem 5.4]{vN99}. Thus, in the situation of Theorem \ref{t.main}
$\cT_\OU$ has a (necessarily unique) invariant measure if and only if $\bS$ is uniformly exponentially stable.

\begin{thm}\label{t.asymp}
Assume the Hypothesis of Theorem \ref{t.main}. 
If there exists an invariant measure $\mu_\infty$ for $\cT_\OU$, then there exists a unique invariant measure $\mu$ for 
$\scrP$. 
\end{thm}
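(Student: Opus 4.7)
My plan is to obtain uniqueness from Doob's theorem and existence via Krylov--Bogoliubov time-averaging; the central technical difficulty will be uniform tightness of the drift correction in the infinite-dimensional Banach space $E$. Uniqueness is essentially free: $\scrP$ is strongly Feller and irreducible by Theorem \ref{t.main}, so Doob's theorem recalled above gives at most one invariant measure. For existence, I would fix $x \in E$ and let $\bX$ be a mild martingale solution of $[A,F,G]$ starting at $x$, which exists by Theorem \ref{t.main}; form the time averages $\nu_T := \tfrac{1}{T}\int_0^T \P(X(t)\in\cdot)\,dt$ and aim to show $\{\nu_T\}_{T>0}$ is tight with every weak limit $\scrP$-invariant.

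For tightness, the hypothesis that $\cT_\OU$ has an invariant measure forces $\bS$ to be uniformly exponentially stable (by the result cited just before the theorem), so $\|S(t)\| \leq Ke^{-\omega t}$ for some $K,\omega >0$. Decompose $X(t) = Z(t) + Y(t)$ with $Z(t) := S(t)x + \int_0^t S(t-s)G\,dW_H(s)$ the Ornstein--Uhlenbeck part and $Y(t) := \int_0^t S(t-s)F(X(s))\,ds$ the bounded drift correction. The laws of $Z(t)$ converge weakly to $\mu_\infty$ and are therefore uniformly tight in $t \geq 0$. For $Y$, the main obstacle surfaces: $\|Y(t)\| \leq K\|F\|_\infty/\omega$ is uniformly bounded but norm boundedness does not yield tightness in $E$. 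The resolution is to split, for $t \geq \tau > 0$,
\[ Y(t) = S(\tau)\int_0^{t-\tau} S(t-\tau-s)F(X(s))\,ds + \int_{t-\tau}^t S(t-s)F(X(s))\,ds, \]
applying the immediate compactness of $\bS$ (Proposition \ref{p.ou}(2)) to place the first summand in a fixed compact set (since the inner integral is norm-bounded uniformly in $t$), while the second has deterministic norm bound $K\|F\|_\infty\tau$. This yields uniform tightness of $\{\mathrm{Law}(Y(t)) : t \geq \tau\}$ for any fixed $\tau>0$; the short-time contribution $t \in [0,\tau]$ is controlled by tightness of $\mathrm{Law}(\bX|_{[0,\tau]}) \in \cP (C([0,\tau];E))$ together with the weight $\tau/T$. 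Hence $\{\nu_T\}$ is tight and I extract $\nu_{T_n} \weak \mu$.

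To check invariance, fix $f \in C_b(E)$ and $s \geq 0$. By the Markov property of $\bX$, $\expect[\scrP(s)f(X(t))] = \expect[f(X(t+s))]$, whence
\[ \int \scrP(s)f\,d\nu_T - \int f\,d\nu_T = \tfrac{1}{T}\Big(\int_T^{T+s} - \int_0^s\Big)\expect f(X(r))\,dr, \]
which is bounded by $2s\|f\|_\infty/T$. Since $\scrP(s)f \in C_b(E)$ by the strong Feller property of $\scrP$, both sides pass to the limit along $T_n \to \infty$, giving $\int \scrP(s)f\,d\mu = \int f\,d\mu$ for every $f \in C_b(E)$. As $C_b(E)$ is measure-determining on the Polish space $E$, this forces $\scrP(s)'\mu = \mu$ as Borel measures, and invariance automatically extends to all $f \in B_b(E)$, completing the construction. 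Existence of a compact set capturing the bulk of the mass of $Y(t)$ uniformly in $t$ is the only nontrivial ingredient, and it crucially combines the immediate compactness of $\bS$ (a byproduct of the strong Feller property of $\cT_\OU$) with the exponential decay enforced by the existence of $\mu_\infty$.
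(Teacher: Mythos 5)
Your proposal is correct and follows essentially the same route as the paper's proof: uniqueness from Doob's theorem, uniform exponential stability of $\bS$ deduced from the existence of $\mu_\infty$, the decomposition $X(t) = Z(t) + Y(t)$, tightness of the laws of $Z(t)$ from their weak convergence to $\mu_\infty$, and control of $Y(t)$ by splitting the convolution at a small time $\tau$ and using compactness of $S(\tau)$ together with the deterministic bound $K\|F\|_\infty\tau$ on the remainder. The one step you compress --- passing from ``$Y(t)$ lies a.s.\ within distance $K\|F\|_\infty\tau$ of a compact set, for every small $\tau$'' to genuine tightness --- is exactly what the paper makes explicit by intersecting these closed neighborhoods over the scales $\tau = n^{-1}$ to obtain a single totally bounded, hence compact, set $L$ containing $Y(t)$ almost surely.
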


\begin{proof}
Let $x \in E$ and  $((\Omega, \Sigma , \P), \FF, \bX, W_H)$  be a mild solution of equation $[A,F,G]$
with initial datum $x$, i.e.\ $\P$-almost surely
\[ X(t) = S(t)x + \int_0^tS(t-s)F(X(s))\, ds + \int_0^tS(t-s)GdW_H(t) \]
for all $t \geq 0$. Put $Z(t) := S(t)x + \int_0^tS(t-s)GdW_H(t)$ and $Y(t) := X(t) - Z(t)$.
Then $\mathbf{Z}$ is an Ornstein-Uhlenbeck process with transition semigroup $\cT_\OU$. Since $\cT_\OU$ is 
strongly Feller and irreducible the laws $\mu_{Z(t)}$ of $Z(t)$ converge weakly to the invariant measure $\mu_\infty$
as $t \to \infty$. It follows that the set $\{ \mu_{Z(t)}\, : \, t \geq 1\}$ is tight. 
Consequently, given $\eps >0$, there exists 
a compact set $K_\eps$ such that $\P (Z(t) \not\in K_\eps ) \leq \eps$ for all $t \geq 1$. 

Concerning $\bY$, we note that $\P$-almost surely
\begin{eqnarray*}
  Y(t) & = & \int_0^t S(t-s)F(Y(s) + Z(s))\, ds = \int_0^t S(r) F(X(t-r))\, dr\\
& = & \int_0^\eps S(r)F(X(t-r))\, dr + S(\eps )\int_\eps^t S(r-\eps)F(X(t-r))\, dr
\end{eqnarray*}
for all $t \geq \eps >0$.

Since $F$ is bounded and $\bS$ is uniformly exponentially stable, it is easy to see that 
there is a bounded set $B_\eps$ 
such that
\[ \int_\eps^t S(r-\eps)F(X(t-s))\, dr \in B_\eps \quad \forall\, t \geq \eps\]
Since $S(\eps)$ is compact, the set $L_\eps := S(\eps )B_\eps$ is relatively compact. We next note that 
for $\eps \leq 1$ we have 
\[ \Big\| \int_0^\eps S(r)F(X(t-r))\, dr \Big\| \leq \eps \sup_{0\leq r \leq 1} \|S(r)\|\cdot \|F\|_\infty
 =: C_1\eps
\]
Now put $S_\eps := \{ x \,:\, d(x, L_\eps ) \leq C_1\eps\}$ and $L := \bigcap_{n\geq 1}S_{n^{-1}}$. Then, almost surely,
$Y(t) \in L$ for all $t\geq 1$. Note that $L$ is closed and totally bounded. Consequently, $L$ is compact. 

It now follows that 
\[ \P (X(t) \not\in K_\eps + L ) \leq \P(Z(t) \not \in K_\eps) + \P (Y(t) \not \in L) \leq \eps + 0\]
for all $t \geq 1$. Hence the laws $\{ \mu_{X(t)}\, : \, t \geq 1\}$ are tight. Thus, by the 
Krylov-Bogoliubov theorem, there exists an invariant measure $\mu$ for $\scrP$. Since $\scrP$ is strongly Feller 
and irreducible, $\mu$ is the unique invariant measure for $\scrP$.
\end{proof}

\section{Examples}\label{sect.examples}
\subsection{The finite dimensional case}
Let us first consider the case where both $E=\tilde{E}$ and $H$ are infinite dimensional, say $E= \CR^m, H = \CR^n$.
In this case, $A$ and $G$ are matrices, $A \in \CR^{m\times m}, G \in \CR^{m\times n}$. Recalling that
 $S(t) \in \cL (E, H_t)$ for all $t>0$ implies that $H_t$ is dense in $E$, we see that $S(t) \in
\cL (E, H_t)$  if and only if $H_t = \CR^m$, i.e.\ iff $Q_t$ is invertible. It is well known that 
this is the case if and only if the \emph{Kalman rank condition} is satisfied, i.e.\ 
the $m \times nm$ matrix
\[ \big[ G, AG, \ldots , A^{m-1}G \big]\]
has full rank $m$. 

How about our second condition, that $\|S(t)\|_{\cL (E, H_t)}$ is locally integrable? 
As it turns out, in finite dimensions this condition is satisfied, if and only if the range of $G$ is 
$\CR^m$. To see this, let us first note that $\|x\|_{H_{Q_t}} = \|Q_t^{-\half}x\|$. Define 
\[ k := \min\Big\{ j \in \{0, \ldots, m-1\} \, : \, \mathrm{rank}\big[ G, \ldots, A^jG\big] = m \Big\}\, .\]
It was proved in \cite[Lemma 3.1]{lun97} that $\|Q_t^{-\half}\| = O (t^{-k-\half})$ as $t \to 0$. Hence 
our condition is satisfied if and only if $k= 0$, i.e.\ we have full noise.

\subsection{Stochastic partial differential equations}
Let $\OO$ be a bounded, open domain in $\CR^d$ with Lipschitz boundary. For $1<p<\infty$, we let $A_p$ be the 
$L^p(\OO)$-realization of the uniformly elliptic differential operator 
\[ \mathcal{A} := -\sum_{i,j=1}^d a_{ij}\frac{\partial^2}{\partial x_i\partial x_j} - 
\sum_{j=1}^d b_j\frac{\partial}{\partial x_j}\]
with domain $D(A_p) = W^{2,p}(\OO)\cap W^{1,p}_0(\OO)$, i.e.\ we require Dirichlet Boundary conditions. 
We assume that the coefficients $a_{ij} = a_{ji}$ belong 
to $C^\theta (\overline{\OO})$ for some $\theta >0$ and that the functions $b_j$ are bounded and measurable.

It is well-known that under these assumptions $-A_p$ generates a uniformly exponentially stable, analytic, 
strongly continuous semigroup $\bS_p$ on $L^p(\OO)$. Moreover, these semigroups are consistent, i.e.\ 
$S_p(t)f = S_q(t)f$ for all $t\geq 0$ whenever $f \in L^p(\OO)\cap L^q(\OO)$. 

Since $\bS_p$ is uniformly exponentially stable and analytic, the fractional powers $A_p^{-\alpha}$, their 
inverses $A_p^\alpha$ and the fractional domain spaces $D(A_p^\alpha)$ are well-defined for 
$\alpha \in (0,1)$.
We put $A_p^0 := I_{L^p(\OO)}$. 

Embedding fractional domain spaces into complex interpolation spaces, 
we see that $D(A_p^\alpha )$ continuously embeds into $W^{2\alpha -\eps, p}(\OO)$ for all $\eps >0$.\medskip 

We put $\tilde{E}=E := L^p(\OO)$ and $H= L^2(\OO)$. Thus $W_H$ is a cylindrical Wiener process on $L^2(\OO)$.
In order to inject the noise into $E$, we use a fractional power $A_2^{-\alpha}$. If $p\leq 2$, then $L^2(\OO) 
\subset L^p(\OO)$, so that we may put $G = A_2^{-\alpha}$ for any $\alpha \geq 0$. If $p >0$, we need to choose 
$\alpha$ large enough so that $D(A_2^\alpha )$ embeds into $L^p(\OO)$. By Sobolev embedding, this is the case 
whenever $\alpha > (4p)^{-1}(dp - 2d)$.

We now consider equation $[-A_p, 0, A_2^{-\alpha}]$. Then the smaller $\alpha$ becomes, the more noise is injected 
into the equation, the border case $\alpha =0$ corresponds to the stochastic equation with space-time white noise.
Thus, as $\alpha$ gets smaller it gets ``more unlikely'' that the equation is well-posed. On the other hand, 
as $\alpha$ gets smaller, it is gets ``more likely'' that the strong Feller property is satisfied.

The values for $\alpha$ for which the equation is well posed and the strong Feller property holds depends on both 
$p$ and the dimension $d$. The following Proposition collects the results.

\begin{prop}
Let $p \in (1,\infty )$.
Under the assumptions and with the notation above, assume that
$\max \{ 0, \frac{d}{4} - \half +\eps, \frac{dp-2d}{4p} +\eps\} \leq \alpha < 1$ for some $\eps >0$. 
\begin{enumerate}
 \item Equation $[-A_p, 0, A_2^{-\alpha}]$ is well-posed. 
 \item If in addition $\alpha + \max\{ 0, \frac{2d-dp}{4p} \} < \half$, then we have $S_p(t) \in \cL (E, H_{Q_t})$
with $\|S_p(t)\|_{\cL (E, H_{Q_t})} \leq \varphi (t)$ for all $t>0$ for some $\varphi \in L^1_\mathrm{loc}([0, \infty )$.
\end{enumerate}
\end{prop}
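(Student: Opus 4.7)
My plan is to verify separately the two hypotheses of Theorem \ref{t.main} for the semigroup $\bS_p$ generated by $-A_p$ and the noise $G = A_2^{-\alpha}$. For part (1), I would invoke the characterization of well-posedness from \cite{vNW05}: the linear equation $[-A_p, 0, A_2^{-\alpha}]$ is well-posed on $E = L^p(\OO)$ if and only if $s \mapsto S_p(s) A_2^{-\alpha}$ is stochastically integrable with respect to $W_H$ on $(0,T)$ for each $T > 0$. Since $L^p$ has UMD and finite cotype, this reduces to showing that $s \mapsto S_p(s) A_2^{-\alpha}$ lies in $L^2((0,T); \gamma(L^2(\OO), L^p(\OO)))$ (with the analogous criterion when $p \in (1,2)$). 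I would control this norm through the factorization $S_p(s) A_2^{-\alpha} = S_p(s/2) \circ (S_2(s/2) A_2^{-\alpha})$, using consistency of the semigroups on $L^p \cap L^2$. The inner factor is Hilbert--Schmidt on $L^2$ with $\|S_2(s/2) A_2^{-\alpha}\|_{\mathrm{HS}}^2 \lesssim s^{-(d/2 - 2\alpha)_+}$, as follows from the Weyl asymptotics $\lambda_k \sim k^{2/d}$ for the eigenvalues of $A_2$; the outer factor is then handled via the Sobolev embedding $D(A_p^\theta) \inject L^p$ combined with the analytic estimate $\|A_p^\theta S_p(s/2)\|_{\cL(L^p)} \lesssim s^{-\theta}$. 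The lower bound $\alpha \geq d/4 - 1/2 + \eps$ is precisely what yields integrability of the square of the resulting singularity at $s=0$, while $\alpha \geq (dp-2d)/(4p) + \eps$ ensures the continuity of $A_2^{-\alpha} : L^2 \to L^p$ used to interpret the stochastic convolution as an $E$-valued process.

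For part (2), I would use the duality characterization of the reproducing kernel Hilbert space via null controllability (cf.\ \cite{vN05}):
\[
\|S_p(t) x\|_{H_{Q_t}} = \inf \Big\{ \|u\|_{L^2((0,t);H)} \, : \, \int_0^t S_2(t-s) A_2^{-\alpha} u(s) \, ds = S_p(t) x \Big\}.
\]
The natural choice $u(s) = A_2^{\alpha} S_2(s)^* Q_t^{-1} S_p(t) x$, rigorously justified via truncation of the spectrum of $A_2$ and a limit, leads to an estimate of the form $\|S_p(t)\|_{\cL(E, H_{Q_t})} \lesssim t^{-\alpha - \max\{0, (2d-dp)/(4p)\}}$. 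The extra Sobolev-type exponent accounts for the embedding of an intermediate $L^2$-based fractional domain space into $L^p$ when $p > 2$ (respectively, the inclusion $L^2 \subset L^p$ on the bounded domain $\OO$ when $p \leq 2$). The hypothesis $\alpha + \max\{0, (2d-dp)/(4p)\} < 1/2$ then guarantees that this exponent is strictly greater than $-1$, so the resulting bound is locally integrable and \eqref{eq.hyp1} holds with $\varphi(t) = C t^{-\alpha - \max\{0, (2d-dp)/(4p)\}}$.

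The main obstacle I anticipate is the careful bookkeeping of fractional powers in both parts: one has to balance (i) the gain from analyticity of $\bS_p$, giving factors of $s^{-\theta}$; (ii) the loss from inverting the truncated covariance $Q_t$, or equivalently from the Weyl-law estimate for $\|S_2(s) A_2^{-\alpha}\|_{\mathrm{HS}}$; and (iii) the loss from Sobolev embedding when passing between $L^2$ and $L^p$. The explicit hypotheses on $\alpha$ in the statement are exactly the constraints that keep the total exponent on $s$ (for part (1)) and on $t$ (for part (2)) strictly greater than $-1$, so that the relevant integrals converge near zero. An additional subtlety is that the equivalent condition for stochastic integrability differs slightly between the regimes $p \leq 2$ and $p \geq 2$; the factorization through $L^2$ outlined above treats both cases uniformly, with the direction of the inclusion between $L^2(\OO)$ and $L^p(\OO)$ accounting for the asymmetry in the final bound.
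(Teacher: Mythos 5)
Your strategy for part (1) --- reduce to a square-function estimate in time and factor $S_p(s)A_2^{-\alpha} = S_p(s/2)\circ\bigl(S_2(s/2)A_2^{-\alpha}\bigr)$, bounding the $\gamma$-norm by the operator norm of the outer factor times the Hilbert--Schmidt norm of the inner one --- has a genuine gap: it is too lossy to cover the stated range of $\alpha$ when $d\geq 3$ and $p>2$. Concretely, take $d=3$, $p=6$ and $\alpha=\half+\eps$, which the hypothesis allows (here $\max\{0,\tfrac d4-\half+\eps,\tfrac{dp-2d}{4p}+\eps\}=\half+\eps$). Your two factors give $\|S_p(s/2)\|_{\cL(L^2,L^6)}\lesssim s^{-\frac d2(\half-\frac1p)}=s^{-\half}$ and $\|S_2(s/2)A_2^{-\alpha}\|_{\mathrm{HS}}\lesssim s^{-(\frac d4-\alpha)}=s^{-\frac14+\eps}$, so the product is of order $s^{-\frac34+\eps}$, whose square is not locally integrable; your criterion fails even though the proposition (and the paper's proof) asserts well-posedness here. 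The structural reason is that $\gamma$-radonifying embeddings into $L^p(\OO)$ on a bounded domain obey the same smoothness threshold as for $L^2$: by \cite[Corollary 2.2]{vNVW08}, $H^{s}(\OO)\inject L^p(\OO)$ is $\gamma$-radonifying once $s>d/2$, independently of $p$. Your factorization instead places the entire radonifying burden on the Hilbert--Schmidt norm in $L^2$ and then pays the ultracontractivity factor $s^{-\frac d2(\half-\frac1p)}$ on top, which is exactly what is too expensive. The paper avoids this by writing $S_p(t)A_2^{-\alpha}=S_p(t)A_p^{\beta}\,j\,A_2^{-(\alpha+\beta)}$ with $\beta<\half$ and $2(\alpha+\beta)>d/2$, using that $j:H^{2(\alpha+\beta)}(\OO)\to L^p(\OO)$ is $\gamma$-radonifying, together with $\gamma$-boundedness of $\{t^{\beta+\delta}S_p(t)\}$ and the Kalton--Weis multiplier theorem. (A smaller point: square integrability of the $\gamma(H,E)$-norm implies $\gamma$-representability only in type-2 spaces, i.e.\ for $p\geq 2$; for $p\in(1,2)$ you would need the genuine $L^p(\OO;L^2)$ square-function characterization, not an ``analogous'' $L^2$-in-time criterion.)

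In part (2) your overall route (the null-controllability description of $H_{Q_t}$ from \cite{vN05}) is the same as the paper's, but your quantitative claim is wrong: the bound $\|S_p(t)\|_{\cL(E,H_{Q_t})}\lesssim t^{-\alpha-\max\{0,(2d-dp)/(4p)\}}$ misses a factor $t^{-\half}$. Already in the diagonal model ($A$ self-adjoint with eigenvalues $\lambda_k$, commuting with $G=A^{-\alpha}$) one has $Q_t=A^{-2\alpha}(2A)^{-1}(I-e^{-2tA})$, hence $\|Q_t^{-1/2}S(t)\|\sim\sup_\lambda\lambda^{\alpha+\half}e^{-t\lambda}(1-e^{-2t\lambda})^{-1/2}\sim t^{-\half-\alpha}$, so $t^{-\alpha}$ is not an upper bound. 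The extra $t^{-\half}$ comes from the $L^2(0,t)$-norm of any admissible control, and it is precisely why the hypothesis requires $\alpha+\max\{0,(2d-dp)/(4p)\}<\half$ rather than $<1$; your closing step (``exponent $>-1$'') therefore rests on an incorrect intermediate estimate, even though the conclusion survives once the estimate is corrected. Moreover, your choice of the minimal-norm control $u(s)=A_2^{\alpha}S_2(s)^*Q_t^{-1}S_p(t)x$, justified ``via truncation of the spectrum of $A_2$'', is delicate here: $A_2$ is not self-adjoint (nonsymmetric drift $b_j$, merely H\"older $a_{ij}$), and $Q_t^{-1}$ need not exist as a bounded operator. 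The paper sidesteps both issues with the explicit suboptimal control $u_f(s)=t^{-1}A_p^{\alpha}S_p(s)f$, which immediately yields $\|S_p(t)f\|_{H_{Q_t}}\leq\|u_f\|_{L^2(0,t;L^2(\OO))}\lesssim t^{-\half-(\alpha+\beta)}\|f\|_{L^p(\OO)}$, with $\beta=0$ for $p\geq2$ and with $\beta>\frac{2d-dp}{4p}$, $\alpha+\beta<\half$, for $p<2$.
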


\begin{proof}
(1) The proof relies on results about stochastic integration in Banach spaces and $\gamma$-radonifying 
operators in \cite{vNW05}, to which we refer also for the definition of $\gamma$-radonifying operators. 
We give a sketch of the proof.

By \cite[Theorem 7.1]{vNW05}, we need to prove that $t \mapsto S_p(t)A_2^{-\alpha}$ represents an element of 
$\gamma (L^2(0,T;H), E)$ for some $T>0$. Since $\alpha > \frac{d}{4} -\half$, we may pick $\beta \in [0, \half)$ such that
$\alpha + \beta > \frac{d}{4}$. In this case, the  embedding $j : H^{2(\alpha + \beta )}(\OO) \to L^p(\OO)$
is $\gamma$-radonifying, see \cite[Corollary 2.2]{vNVW08}. By consistency, we may write 
$S_p(t)A_2^{-\alpha} =  S_p(t) A_p^\beta j A_2^{-(\alpha + \beta)}$, where we consider $A_p^\beta$ as an operator 
from $E$ to the extrapolation space $E_{-\beta}$.

By \cite[Lemma 4.1]{vNVW08}, the set $\{ t^{\beta + \delta}S_p(t) \, : \, t \in (0,T] \} 
\subset \cL (E_{-\beta}, E)$ is $\gamma$-bounded with $\gamma$-bound of order $O(T^\delta)$ as $T\to 0$. 
Since $t \mapsto t^{-(\beta + \delta)}jA_2^{-(\alpha + \beta)}$ represents an element of $\gamma (L^2(0,T;H), E)$, 
a multiplier result in \cite{kwpre}, see also \cite[Lemma 2.9]{vNVW08}, proves that $S_p(\cdot)A_2^{-\alpha}$ 
represents an element 
of $\gamma (L^2(0,T; H), E)$. Moreover, its norm is of order $O(T^\delta)$ as $\eps \to 0$. Hence, by \cite{vN10}, 
the stochastic convolution $\int_0^\cdot S(\cdot - s)A_2^{-\alpha}dW_H(s)$ has a continuous modification.
It follows that $[-A_p, 0, A_2^{-\alpha}]$ has a unique mild martingale solution for any initial value.\medskip 

(2)  For $f \in L^p(\OO)$, we have 
\begin{eqnarray*}
 S_p(t)f & = & \frac{1}{t}\int_0^t S_p(t)f\, ds = \int_0^t S_p(t-s)\frac{1}{t}S_p(s)f\, ds\\
& = &  \int_0^t S_p(t-s)A_2^{-\alpha}\frac{1}{t}A_p^{\alpha}S_p(s)f\, ds 
\end{eqnarray*}
In the language of control theory, $u_f:= t^{-1}A_p^\alpha S_p(\cdot )f$ is a control for reaching $S_p(t)f$ 
at time $t$. Thus, by the results of \cite{vN05}, if $u_f$ is an element of $L^2(0,t; L^2(\OO))$, then 
$S(t)f \in H_{Q_t}$ and $\|S(t)f\|_{H_{Q_t}} \leq \|u_f\|_{L^2(0,t;L^2(\OO))}$.

If $p \geq 2$, then, since $\alpha< \half$, we have $u_f \in L^2(0,t; L^p(\OO)) \subset L^2(0,t; L^2(\OO))$. 
Otherwise, by the additional assumption, we may pick
$\beta \geq 0$ such that $\alpha + \beta < \half$ and $D(A_p^\beta)$ continuously embeds into 
$L^2(\OO)$. We obtain
\begin{eqnarray*}
 \|u_f\|_{L^2(0,t;H)}^2 & = & t^{-2}\int_0^t \|A_p^\alpha S(s)f\|_{L^2(\OO)}^2\, ds 
 \lesssim  t^{-2}\int_0^t \|A_p^{\alpha }S(s)f\|_{D(A^{\beta})}^2\, ds\\
& \leq & t^{-2}\int_0^t \|A_p^{\alpha + \beta} S_p(s)f\|_{L^p(\OO)}\, ds 
\leq t^{-2}\int_0^t s^{-2(\alpha+\beta)}\, ds \|f\|_{L^p(\OO)}\\
& \lesssim & t^{-1-2(\alpha + \beta)} \|f\|_{L^p(\OO)}\, .
\end{eqnarray*}
Hence, $\|S(t)\|_{\cL (E, H_{Q_t})} \lesssim 
t^{-\half - (\alpha + \beta )}$ and, since $\alpha + \beta < \half$, the latter belongs to $L^1_\mathrm{loc}([0,\infty ))$.
\end{proof}

\subsection{1-dimensional equation with space-time white noise}
We now look in more detail to the stochastic partial differential equation from the previous section 
in one space dimension with space-time white noise, which is of great importance in applications. 
For convenience, we set $\OO = (0,1)$.

Thus, our stochastic partial differential equation takes the form
\[
 \left\{
\begin{array}{lll}
\frac{\partial u}{\partial t}(t,x)& = &
a(x) \frac{\partial^2 u}{\partial x^2}(t,x) + b(x)\frac{\partial u}{\partial x}(t,x)
+ c(x) + \frac{\partial w}{\partial x}(t,x)\\[0.4em]
& & \quad \quad \mbox{for}\quad t>0, x \in (0,1)\\
u(t,x) & = & 0 \quad\mbox{for}\quad t>0, x \in \{0,1\}\\
u(0,x) & = & u_0(x) \quad \mbox{for}\quad x \in (0,1) ,
\end{array}
\right. 
\]
where $w$ is a space-time white noise.

The results of the previous subsection for $\alpha =0$ and $d=1$ show that for $p \in (1,2)$ this equation is well posed 
on $L^p(0,1)$ and the corresponding transition semigroup satisfies the Hypothesis of Theorem \ref{t.main}. For $p>2$, 
we may put $\tilde{E}= L^2(0,1)$. Note that $\|S(t)\|_{\cL(L^2(0,1), H^{2\theta}(0,1))} \lesssim t^{-\theta} \in L^2_{\mathrm{loc}}
(0,\infty)$ for $\theta < \half$. Hence, given $p>2$, picking $\theta$ close enough to $\frac{1}{4}$ it follows from 
Sobolev embedding that $\|S(t)\|_{\cL (\tilde{E}, E)} \in L^2_{\mathrm{loc}}(0,\infty)$ hence \eqref{eq.hyp2} is satisfied in this case.
Since $L^p(0,1)$ is continuously embedded into $L^2(0,1)$, estimate \eqref{eq.hyp1} follows from the corresponding estimate
on $L^2(0,1)$. The well-posedness of the equation on $L^p(0,1)$ follows with similar arguments as above, we leave 
the details to the reader.

It is also possible to consider the equation on the state space $E = C_0(0,1)$ of continuous functions vanishing at the 
boundary. Indeed, first considering the equation on $L^p(0,1)$, the factorization method, cf.\ \cite[Corollary 4.4]{vNVW08}
for the Banach space situation, yields that for $u_0 \in D(A_p^\theta)$ the solution of the equation 
has a modification with continuous paths in $D(A_p^\eta)$ for $\eta < \half$. On the other hand, since $D(A_p^\eta)
\inject W^{2\eta, p}_0(0,1)$, the Sobolev embedding yields 
$D(A_p^\eta) \subset C_0(0,1)$ whenever $2\eta > \frac{1}{p}$. 
Consequently, for $\eta \in (\frac{1}{2p},\half)$ 
the equation is well-posed on $C_0(0,1)$ for initial values $u_0 \in D(A_p^\theta)$. With standard arguments, this can be 
extended to initial values in $C_0(0,1)$, hence the equation is well-posed on $C_0(0,1)$. Note that 
\[
 \|S(t)\|_{\cL (\tilde{E}, E)} \leq \|S(t/2)\|_{\cL (L^2(0,1), L^p(0,1))}\|S(t/2)\|_{\cL (L^2(0,1), D(A_p^\theta))}
\lesssim t^{-(\theta+\eta)}\, .
\]
Thus, by picking $p$ large enough, we can pick $\theta$ and $\eta$ such that $\theta+\eta < \half$, i.e.\ such that 
\eqref{eq.hyp2} is satisfied.

Moreover, since \eqref{eq.hyp1} is satisfied on $L^p(0,1)$, it follows as above that \eqref{eq.hyp1} is satisfied 
on $C_0(0,1) \subset L^p(0,1)$. Consequently, the assumptions of Theorem \ref{t.main} are satisfied and our results 
yield 

\begin{thm}
Let $a \in C^\delta([0,1])$ for some $\delta>0$, $b,c \in L^\infty (0,1)$ and $F : C_0(0,1) \to C_0(0,1)$ be bounded 
and measurable. Then the equation 
\[
 \left\{
\begin{array}{lll}
\frac{\partial u}{\partial t}(t,x)& = &
a(x) \frac{\partial^2 u}{\partial x^2}(t,x) + b(x)\frac{\partial u}{\partial x}(t,x)
+ c(x) + \big[F(u(t))\big](x) + \frac{\partial w}{\partial x}(t,x)\\[0.4em]
& & \quad \quad \mbox{for}\quad t>0, x \in (0,1)\\
u(t,x) & = & 0 \quad\mbox{for}\quad t>0, x \in \{0,1\}\\
u(0,x) & = & u_0(x) \quad \mbox{for}\quad x \in (0,1) ,
\end{array}
\right. 
\] 
Is well-posed on $E=C_0(0,1)$. Moreover, the corresponding transition semigroup is strongly Feller and irreducible 
and admits a unique invariant measure.
\end{thm}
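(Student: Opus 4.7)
My plan is to verify that all hypotheses of Theorem~\ref{t.main} and Theorem~\ref{t.asymp} are satisfied in this concrete setting, as essentially all the analytic groundwork has already been laid in the preceding discussion. I would take $\tilde{E} := L^2(0,1)$, $H := L^2(0,1)$, $E := C_0(0,1)$, let $A$ be the negative of the Dirichlet realization of $a\,\partial_x^2 + b\,\partial_x$ on $L^2(0,1)$, and let $G : H \to \tilde{E}$ be the identity embedding. The inhomogeneity $c$ (together with the pointwise multiplication operator that makes sense of $[F(u)](x)$) would be absorbed into a bounded measurable drift $\tilde F : C_0(0,1) \to C_0(0,1)$; once $c \in C_0(0,1)$ this is immediate, and for general $c \in L^\infty$ one would either truncate or argue that the affine shift $u \mapsto c$ can be handled by the same perturbation machinery since $B$ depends linearly on $F$.

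The continuous dense embeddings $D(A) \subset C_0(0,1) \subset L^2(0,1)$ are standard, the density following from $D(A) \subset W^{2,2}_0(0,1) \cap C_0(0,1)$ being dense in each. Well-posedness of $[A,0,G]$ on $C_0(0,1)$, as well as the two integrability conditions \eqref{eq.hyp1} and \eqref{eq.hyp2}, were established in the preceding discussion: the bound $\|S(t)\|_{\cL(\tilde E, E)} \lesssim \|S(t/2)\|_{\cL(L^2,L^p)} \|S(t/2)\|_{\cL(L^p, D(A_p^\eta))} \lesssim t^{-(\theta+\eta)}$ with $\theta + \eta < \tfrac12$ gives \eqref{eq.hyp2}, and \eqref{eq.hyp1} on $C_0(0,1)$ follows from the corresponding estimate on $L^2(0,1)$ (the case $p = 2, d = 1, \alpha = 0$ of the previous proposition, where $\alpha + \max\{0,(2d-dp)/(4p)\} = 0 < \tfrac12$) by factoring $S(t)|_{C_0} = S(t/2)\circ \iota \circ S(t/2)|_{C_0}$ through $L^2$, using the continuous embedding $\iota : C_0(0,1) \hookrightarrow L^2(0,1)$ and the contraction $\|S(s)\|_{\cL(H_{Q_s}, H_{Q_{s+r}})} \leq 1$ from Proposition~\ref{p.ou}(1).

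The $E$-valued continuity of $t \mapsto S(t)x$ on $(0,\infty)$ for $x \in \tilde E$ follows from analyticity of the semigroup on $L^2$ together with the embedding $D(A_p^\eta) \hookrightarrow C_0(0,1)$ for $2\eta > 1/p$: writing $S(t)x = S(t/2)\, S(t/2)x$ realizes $t \mapsto S(t)x$ as the composition of an analytic $L^p$-valued curve with a bounded operator into $C_0(0,1)$, so the orbit is continuous in $E$ on $(0,\infty)$. With all hypotheses in place, Theorem~\ref{t.main} gives well-posedness of $[A,F,G]$ on $C_0(0,1)$ together with strong Feller and irreducibility of the transition semigroup $\scrP$. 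For the invariant measure, Dirichlet boundary conditions force the spectrum of $-A_p$ to lie in a left half-plane bounded away from zero, hence $\bS$ is uniformly exponentially stable on $L^p(0,1)$ and, by consistency and the above factorization, also on $C_0(0,1)$; by \cite[Proposition 4.5]{vNW06} the Ornstein--Uhlenbeck semigroup $\cT_\OU$ admits an invariant measure, and Theorem~\ref{t.asymp} then yields a unique invariant measure for $\scrP$.

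The main obstacle is the bookkeeping around the two-space setup $E \subset \tilde E$: one must check that the factorization through $L^p$ simultaneously delivers \eqref{eq.hyp2}, the continuity of $t \mapsto S(t)x$ in $E$ for $\tilde E$-data, and the transfer of \eqref{eq.hyp1} from $L^2$ to $C_0(0,1)$, with a consistent choice of the parameters $p,\theta,\eta$. Once one fixes, say, $p$ large and $\eta \in (\tfrac{1}{2p}, \tfrac12)$ with $\theta$ chosen so that $\theta + \eta < \tfrac12$, all three requirements hold simultaneously, and the remaining verifications are routine consequences of the results proved earlier.
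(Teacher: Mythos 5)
Your proposal is correct and takes essentially the same route as the paper, whose ``proof'' of this theorem is precisely the discussion preceding it: the two-space setup $\tilde{E}=L^2(0,1)$, $E=C_0(0,1)$, well-posedness of the linear equation on $C_0(0,1)$ via the factorization method and the embedding $D(A_p^\eta)\subset C_0(0,1)$ for $2\eta>1/p$, the bound $\|S(t)\|_{\cL(\tilde{E},E)}\lesssim t^{-(\theta+\eta)}$ with $\theta+\eta<\half$ for \eqref{eq.hyp2}, the transfer of \eqref{eq.hyp1} to $C_0(0,1)$ by continuous embedding, and finally Theorem \ref{t.main} plus uniform exponential stability and Theorem \ref{t.asymp} for the invariant measure. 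You are in fact a bit more careful than the paper in checking the $E$-valued continuity of $t\mapsto S(t)x$ for $x\in\tilde{E}$, which the paper leaves implicit in the analyticity of $\bS_p$.

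One substantive remark, on the term $c$. Since Theorem \ref{t.main} requires a drift mapping $E$ into $E$, and $c\in L^\infty(0,1)$ need not lie in $C_0(0,1)$, the inhomogeneity cannot simply be absorbed into $F$; the paper is silent on this point, and of your two suggested fixes neither quite closes it: ``truncation'' leaves you outside the framework of Theorem \ref{t.limit} (whose drifts, including the limit, must belong to $B_b(E,E)$), while the ``affine perturbation'' route would require re-proving Proposition \ref{p.verify}, Theorem \ref{t.uil} and Theorem \ref{t.limit} for $\tilde{E}$-valued drifts. The clean fix is an elliptic shift: by uniform exponential stability the generator $A$ (which should be the Dirichlet realization of $a\partial_x^2+b\partial_x$ itself, not its negative --- note the sign slip in your setup) is invertible, so $z:=-A^{-1}c\in D(A)\subset C_0(0,1)$, and $v:=u-z$ solves the equation with $c$ removed and drift $v\mapsto F(v+z)$, which is bounded and measurable from $C_0(0,1)$ to $C_0(0,1)$; conjugating by the translation $u\mapsto u-z$ then carries well-posedness, the strong Feller property, irreducibility and the unique invariant measure back to the original equation.
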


\section*{Acknowledgment}
I would like to thank Jan van Neerven for reading a large part of this article and for several 
useful suggestions.

\end{document}